\definecolor{orange1}{rgb}{0.92900,0.69400,0.12500}%
\definecolor{purple1}{rgb}{0.49400,0.18400,0.55600}%
\definecolor{brown1}{rgb}{0.7,0.3400,0.12500}%
\definecolor{blue11}{rgb}{0.00000,0.44700,0.74100}%
\definecolor{red11}{rgb}{0.85000,0.32500,0.09800}%
\definecolor{color1}{rgb}{0.22,0.45,0.70}
\definecolor{color2}{rgb}{0.9, 0.17, 0.31}
\definecolor{color3}{rgb}{0.8, 0.25, 0.33}
\definecolor{color4}{RGB}{15, 5, 107}
\definecolor{blue2}{rgb}{0.2,0.2,0.7}
\definecolor{green2}{rgb}{0.13, 0.7, 0.67}
\definecolor{green3}{rgb}{0.3,0.6,0.4}
\definecolor{mygray}{gray}{0.40}
\definecolor{mylightgray}{gray}{0.70}
\pgfplotsset{compat=1.17}
\def\paragraph{\@startsection{paragraph}{4}%
  \z@\z@{-\fontdimen2\font}%
  {\normalfont\bfseries}}
\newtheorem{lemma}{Lemma}[section]
\newtheorem{proposition}{Proposition}[section]
\newtheorem{corollary}{Corollary}[section]
\newtheorem{remark}{Remark}[section]
\newtheorem{definition}{Definition}[section]
\newcommand{\E}{\mathbb{E}}
\newcommand{\F}{\mathcal{F}}
\newcommand{\Pro}{\mathbb{P}}
\newcommand{\R}{\mathbb{R}}
\newcommand{\C}{\mathbb{C}}
\newcommand{\dst}{\displaystyle}
\newcommand{\eps}{\varepsilon}
\newcommand{\Cov}{\textrm{Cov}}
\newcommand{\vsd}{\vspace{0.2cm}}
\newcommand{\re}[1]{\textcolor{color3}{#1}}
\newcommand{\dd }{\mathrm{d}}
\newcommand{\norme}[1]{\left\Vert #1\right\Vert}
\title{Probing the speckle to estimate the effective speed of sound, a first step towards quantitative ultrasound imaging.}
\author[JG]{Josselin Garnier \textsuperscript{1}}
\address{\textsuperscript{1}CMAP, CNRS, Ecole polytechnique, Institut Polytechnique de Paris,
91120 Palaiseau, France.}
\author[LG]{Laure Giovangigli \textsuperscript{2}}
\author[QG]{Quentin Goepfert \textsuperscript{2}}
\address{ \textsuperscript{2}POEMS, CNRS, Inria, ENSTA Paris, Institut Polytechnique de Paris,
91120 Palaiseau, France}
\author[PM]{Pierre Millien \textsuperscript{3}}
\address{ \textsuperscript{3}Institut Langevin, ESPCI Paris, PSL University, CNRS, 1 rue Jussieu,  75005 Paris, France.}
\email[Corresponding Author]{pierre.millien@espci.fr}
\begin{document}
\pagestyle{plain}
\maketitle

\begin{abstract}
In this paper, we present a mathematical model and analysis for a new experimental method [Bureau and al., arXiv:2409.13901, 2024] for effective sound velocity estimation in medical ultrasound imaging.  We perform a detailed analysis of the point spread function of a medical ultrasound imaging system when there is a mismatch between the effective sound speed in the medium and the one used in the backpropagation imaging functional.  Based on this analysis, an estimator for the speed of sound error is introduced. Using recent results on stochastic homogenization of the Helmholtz equation, we provide a representation formula for the field scattered by a random multi-scale medium (whose acoustic behavior is similar to a biological tissue) in the time-harmonic regime.  We then prove that  statistical moments of the imaging function can be accessed from data collected with only one realization of the medium.  We show that it is possible to locally extract the point spread function from an image constituted only of speckle and build an estimator for the effective sound velocity in the micro-structured medium.  Some numerical illustrations are presented at the end of the paper. 

\end{abstract}

\begin{center}

\def\keywords2{\vspace{.5em}{\textbf{  Mathematics Subject Classification
(MSC2020).}~\,\relax}}
\def\endkeywords2{\par}
\keywords2{35R30, 92C55,35R60,35J05 }\\

\center
\def\keywords{\vspace{.5em}{\textbf{ Keywords.}~\,\relax}}
\def\endkeywords{\par}
\keywords{Helmholtz equation,  quantitative ultrasound imaging,  stochastic homogenization.}
\end{center}

\section{Introduction}
\paragraph{Scientific context}
Ultrasound imaging is a cheap,  safe portable way of imaging soft tissues in real time. 
Medical ultrasound exams account for about $20\%$ of the imaging exams \cite{england2020diagnostic},  and can be used for numerous different applications,  from obstetric to cardiology or dermatology \cite{ranganayakulu2016ultrasound}.  The principle is the following:  an ultrasonic wave is emitted by a probe to insonify a specific organ (womb, liver, brain...).  The inhomogeneities in the tissues produce echoes (\emph{backscattered wave}) that are measured on the probe.  An image showing the localization of the sources of the echoes is computed using a \emph{backpropagation} type algorithm (Sum and delay,  Kirchoff migration\ldots). These types of \emph{time reversal} based algorithms are all relying on the principle of a \emph{space-time} correspondence (travel time) and require some \emph{a priori} hypothesis on the speed of sound in the medium.  The speed of sound contrast between different types of soft tissues rarely exceeds $10\%$ and in practice most commercial devices use a constant value for the speed of sound, usually close to the one observed in water.  
  Nevertheless this approximation can deteriorate the quality of the images significantly in certain situations by introducing artefacts,  distorting geometric features or reducing the contrast.  
Beyond the improvement of the images, the knowledge of the speed of sound in certain organs can be used as an indicator of a pathological state (such as hepatic steatosis \cite{burgio2019ultrasonic}).  We refer to the review \cite{cloutier2021quantitative} for a complete overview of the current state of the art on quantitative ultrasound imaging technics.   
The stakes of measuring \emph{in situ} the propagation speed of ultrasounds in soft tissues is therefore twofold: improve the reliability of the ultrasound images and provide new sensitive diagnostic tools. 
\paragraph{The reflexion matrix approach}
An important evolution in medical ultrasound imaging was the development of new insonification processes. 
By using a spherical wave or a plane wave,  it is possible to illuminate the whole medium with one wave and to get confocal images in post-processing : instead of focusing physically in the medium like in conventional ultrasound imaging modalities,  the focusing is done numerically by exploiting the linearity of the wave equation and the superposition principle.This method has allowed for a drastic improvement in the frame rate (from $24$fps up to $1000$ fps) with no loss in contrast or resolution \cite{tanter2014ultrafast}.  
Another advantage is that since the image processing is done numerically,  any extra information about the speed of sound can be incorporated \emph{a posteriori} to improve the images.  Speed of sound estimation is an active area of research and several groups have proposed innovative methods based on adjusting the speed of sound by maximizing a \emph{quality indicator},  such as echo amplitude \cite{yamaguchi2021basic},  a coherence factor \cite{imbault2017robust,imbault2018ultrasonic},  or the brightness of a strong reflector \cite{pirmoazen2020quantitative}.  The \emph{computed ultrasound tomography in echo mode} (CUTE) method \cite{jaeger2015computed,jaeger2022pulse,beuret2023windowed} uses a localized detection of aberration phase shifts under different angles of illumination and inverts a linear model that relates these measurements to the speed of sound in tissues. 

Over the last few years, the team of A.  Aubry and M. Fink have obtained spectacular experimental results in the domain of aberration correction and speed of sound estimation using a \emph{reflection matrix} approach \cite{aubry2020reflection,lambert2022,lambert2022ultrasound2} (see section \ref{sec:measurements}).  In \cite{bureau2024reflection} a new method to estimate the speed of sound in soft tissues has been introduced.  The idea,  inspired from adaptative optics,  is also to assess  the speed of sound in the medium by maximizing a quality indicator (the amplitude at the center of the focal spot).  The main strength of this method is that it does not require the presence of a strong isolated reflector (guide star).  The idea is to probe the point spread function directly in the speckle part of the image by using a clever focusing sequence that mimics the presence of an isolated reflector. 
The goal of the present work is to present a robust mathematical model and analysis to support their experimental proof of concept of the method and quantify precisely some of the effects observed experimentally. 

\paragraph{Outline of the article and main contributions}
In this paper,  we start by presenting a general model for medical ultrasound imaging set ups.   
Particular attention is given to incorporating the specific features of these experiments,  whether it is in the description of the acoustic properties of soft tissues with a stochastic model or the use of an asymptotic regime that describes accurately the different scales inherent to the physics of the problem. 
Building on our recent results on the high order stochastic homogeneization of Helmholtz equation \cite{garnier2023scattered} we give an approximation of the measurements in the time-harmonic regime for a random multi-scale medium. 
In section \ref{sec:imagingfunc}, we perform an asymptotic analysis of the point spread function of a medical ultrasound imaging system in the case where the speed of sound used in the backpropagation imaging functional is not the effective speed of sound in the medium.  We define a general notion of \emph{focal spot} and quantify the effects of the misfit in speed of sound on the focal spot (displacement,  deformation\ldots). Based on this analysis, an estimator for the speed of sound error is introduced (proposition \ref{prop:amplitude}).
In section \ref{sec:speedofsoundspeckle},  we perform a mathematical analysis of the method developped by A. Aubry and his collaborators \cite{bureau2024reflection}.  We show that in a micro-structured medium,  it is possible to access the average of the point spread function over the focal spot via the variance of the value of the imaging function at well chosen pixels (lemma \ref{lem:incoh}).  We exhibit an estimator for the effective speed of sound (proposition \ref{prop:esgtimatorvariance}).  In section \ref{sec:ensembletospatial}, we show how to approximate statistical moments of the imaging function with data collected from a single realization of the medium using stationarity properties of the medium.  Proposition \ref{prop:quantificationvariance} gives a quantification of the approximation error for the variance of the imaging function.  The last section includes some numerical simulations of the problem to illustrate the claims of the paper.  

\paragraph{Position with respect to the state of the art}
In the mathematics community,  the problem of velocity estimation from boundary measurements for various types of waves has been an active area of research for decades.  The usefulness of any method depends strongly on the practical experimental constraints.  For example, methods based on the inversion of the Eikonal equation \cite{stefanov2019travel} are of little use in situations where the sensors cannot surround the medium to image.  Similarly,  powerful \emph{full wave inversion} methods \cite{faucher2020adjoint} that can perform remarkably to reconstruct the velocity of seismic waves inside earth's crust will fall short if the imaging process is required to be in real time, due to their algorithmic complexity.  The rule of thumb is that the more \emph{a priori} information a method can incorporate,  the better it will perform in specific situations.  For example,  in the case of medical ultrasound imaging,  the data that are measured (back-scattered field) are generated by the biological tissue's micro-structure, while the properties of interest (effective speed of sound and travel times) appear at a macroscopic scale.  
To the best of our knowledge,  there is no recent mathematical theory studying the effective velocity estimation problem from  boundary measurements in a reflection geometry for a random micro-structured medium, particularly using reconstruction methods with a low algorithmic complexity. 

\section{Model for medical ultrasound experiments}

\subsection{Geometry and governing equations}\label{sec:geometry}
The general model for the governing equation is a  divergence form wave equation in an unbounded domain. 
\paragraph{The coefficients}

We consider a $d$-dimensional framework. Take a simply connected open bounded domain $D$ with at least Lipschitz boundary regularity,  modeling a medium embedded in an infinite homogeneous isotropic medium. 
 Introduce the two coefficients:
\begin{equation}
\left\{\begin{aligned}
a(x) := & a_m  \mathbbm{1}_{\R^d \setminus \overline{D}}(x) + a_D(x)\mathbbm{1}_{D}(x),\\
 n(x) := & n_m \mathbbm{1}_{\R^d \setminus \overline{D}}(x) + n_D(x)\mathbbm{1}_{D}(x),
\end{aligned}\right.  \qquad x \in \R^d,
\end{equation}
where $a_D,n_D\in L^\infty(D)$.  Moreover, we assume that the functions $a$ and $n$ are  uniformly positive.  
In acoustics (which is the context of medical ultrasound imaging),  the coefficients $a$ and $n$ are related to the inverse of the density (usually denoted $\rho$) and the inverse of the bulk modulus (usually denoted $\kappa$).  See for instance \cite[section 3.3.3]{pierce2007basic}. 
For simplicity,  in this paper we make the \emph{restrictive assumption} that \begin{align*}
a_D\equiv a_m =1.
\end{align*}
\begin{remark}This assumption seems restrictive at first as the model would only be accurate for a medium with a constant density.  Moreover,  mathematically,  the representation formulas for the scattered field are very different when the contrast is in the divergence part of the equation.  However,  this is a common hypothesis in the medical ultrasound community, where most models do not have contrast in the divergence.  
This is motivated by the fact that when the scatterers are dilute (at distances larger than the wavelength) and do not exhibit a particular orientation or elongation,  it is impossible to distinguish a dipole from a monopole in the reflection geometry.   Note that outside the dilute case the question \emph{under which conditions are  the two contrast models equivalent ?} remains open. 
\end{remark}
\paragraph{Time domain versus frequency domain}
All the experiments are done in the time domain,  and backscattered echoes are measured as time-dependent functions.  Nevertheless, besides the simple \emph{sum and delay} algorithms \cite{perrot2021so},  most of the advanced post-processing is usually done in the frequency domain. The introduction of the notion of wavelength is critical to quantify the resolution of the methods so our mathematical analysis will be done in the time harmonic domain.  A Laplace-Fourier transform can be used to switch from one domain to another.  In the rest of the chapter the frequency will be denoted by $\omega$.  Since the transducers have a finite bandwith $\mathcal{BW}:=[\omega_-,\omega_+]$ we will always assume $\Re \omega \in \mathcal{BW}$.
We will only write the dependency of the wavefields with respect to $\omega$ when it is needed.  Usually,  the time profile of the incident wave at a given point has a central frequency $\omega_c$:
\begin{align*}
\mathcal{BW} = [\omega_c-\mathcal{B},\omega_c+\mathcal{B}],
\end{align*} where $\mathcal{B}$ is the bandwidth.  If the pulse consists of a few oscillations at the central frequency (see an example fig. \ref{fig:InputSignal}) then we are in a \emph{broadband} situation $\mathcal{B}=\mathcal{O}(\omega_c)$. If the pulse consists of oscillations at the central frequency with an enveloppe decaying \emph{slowly} compared to the central period then we are in a \emph{narrowband} case $\mathcal{B}\ll \omega_c$.

\paragraph{The incident wave}
The choice of the incident wave $u^i$ has little impact on the mathemati-cal model for the direct problem but is of crucial importance in the applications and for the inverse problem. 
The simplest model for an incident wave is to consider an eigenfunction for the free space laplacian, \emph{i.e. } a plane wave of amplitude $U^i$ propagating in direction $\theta\in \mathbb{S}^{d-1}$ with wavenumber $k_m=\sqrt{ n_m} \omega$:
\begin{align*}
u^i(x)=U^i e^{ik_m \theta \cdot x},\qquad x\in \R^d.
\end{align*}
This is a good model when the source is far enough from the domain $D$ to be considered \emph{at infinity}. 

It is possible to use more sophisticated models for the incident wave.  For example a point source outside of the domain $\overline{D}$ or a finite sum of point sources.  In this case the incident wave is expressed as a linear combination of Green functions: 
\begin{align*}
u^i(x)= \sum_{j=1}^N \alpha_j \Gamma^{k_m}(x,x_j),\qquad x\in \R^d\setminus\cup_j\{x_j\},
\end{align*}
with $\alpha_j\in \mathbb{C}$. 
This is a good model for the medical ultrasound experiments, where probes, which can be close to the medium, are made of a finite number of transducers.
When the incident wave is emitted via a point source located at emission point $x_e$ we will denote it $u^i(x_e,\cdot)$ to keep track of the dependency on the location of the source when it is needed,  especially for the inverse problem.  
Sometimes though a continuous model for the probe can be more suited for a mathematical analysis.  In this case,  one can model the probe by a compact manifold $\mathcal{P}$ outside of $\overline{D}$ and the incident wave can be modeled by a single or a double layer potential on that manifold:
\begin{align*}
u^i(x) = \mathcal{S}_\mathcal{P}^{k_m}(\varphi)(x) \qquad \mathrm{or} \qquad u^i(x) = \mathcal{D}_\mathcal{P}^{k_m}(\psi)(x) \qquad  x\in \R^d\setminus\mathcal{P},
\end{align*}
where the layer potentials are given by \cite{ammari2009layer}:
\begin{align*}
\mathcal{S}_\mathcal{P}^{k_m}(\varphi)(x) = \int_\mathcal{P} \varphi(y) \Gamma^{k_m}(x,y) \dd \sigma(y)  \qquad \mathrm{and} \qquad \mathcal{D}_\mathcal{P}^{k_m}(\psi)(x)=\int_\mathcal{P} \varphi(y)\frac{ \partial\Gamma^{k_m}}{\partial \nu(x)}(x,y) \dd \sigma(y).
\end{align*}
The profile of the incident wave in the time domain $f(t)$ can be easily added to this model by adding its Fourier-Laplace transform $\hat{f}(\omega)$ as a prefactor.  The frequency content does not affect much the direct problem but has a major impact on the reconstructed images' resolution as will be detailed in section \ref{subsec:propertiespsf}

\paragraph{The governing equation}
The scattering problem by the medium $D$ is modeled by the following Helmholtz equation in $\R^d$:
\begin{align} \label{eq:divformhelmholtz}
\left\{\begin{aligned}
&\Delta u  + \omega^2 n u =0 \qquad \mathrm{in\,}  \R^d\\
&\lim_{\vert x\vert \rightarrow\infty} \left\vert x\right\vert^{\frac{d-1}{2}} \left(\partial_{\vert x\vert} \left(u-u^i\right) - ik_m \left(u-u^i\right) \right) =0. 
\end{aligned}\right. 
\end{align}
This equation has a unique solution in $H^2_{\mathrm{loc}}(\R^d)$. The well-posedness is well known and can be found in textbooks \cite{PJoly}.
The difficulty of equation \eqref{eq:divformhelmholtz} lies in getting either sharp regularity results for the solution or some explicit control with respect to the frequency.  Usually some extra assumptions on the coefficients are needed.  
 These questions have been a very active area of research over the last decades. We refer to the excellent paper \cite{moiola2019acoustic} and references within for a recent overview of these questions.

\subsection{Measurement model : the reflection matrix}\label{sec:measurements}
As mentionned previously,  in practice the probe has a finite number $N$ of transducers that can be modeled as a finite collection of point sources $(x_i)_{i=1\ldots N}$ each on the manifold $\mathcal{P}$ describing the shape of the probe. 
For medical imaging modelization it is reasonable to assume the probe is linear and positioned just above the medium.  We set
\begin{align*}
\mathcal{P}=[-\ell,\ell]^{d-1}\times\left\{0\right\}.
\end{align*} 
 Each transducer can be controlled independently.  The measurements are obtained in the following way:
\begin{itemize}
\item one transducer at $x_{e}$ is activated  to emit an incident wave $u^i(x_e,\cdot,\omega)$,
\item the scattered wave $u^s(x_e,\cdot, \omega)=u(x_e,\cdot, \omega)-u^i(x_e,\cdot, \omega)$ is recorded by all the transducers  $x_r$ on the probe for all frequencies in the bandwidth,
\item the operation is repeated with another transducer $x_{e'}$ and so on.
\end{itemize}
In the end one has access to the measurement matrix at each frequency $$M_{e,r}(\omega)=u^s(x_e,x_r,\omega) \in \C^{N\times N}.$$
The continuous model to describe this type of measurements is to assume we are given the measurement map:
\begin{align*}
M\in L^2(\mathcal{P}\times\mathcal{P}\times \mathcal{BW})
\end{align*}
given by
\begin{align}\label{eq:defmeasurementmaps}
M(x_e,x_r,\omega) = u^s(x_e,x_r, \omega).
\end{align}
\begin{remark}
In order to improve the signal-to-noise ratio in the presence of additive noise,
multiplexing measurement strategies can be used \cite{harwit2012hadamard}, in which the transducers transmit orthogonal waveforms and the measurement matrix can then be extracted numerically.
\end{remark}

\subsection{Model for the biological tissues}\label{subsec:modelbio}

We now want to give a more precise model for the coefficient $n$ inside the medium $D$. 
The signal measured by the probe comes from the local discontinuities in the acoustic impedance.  To correctly capture the nature of the experiment,  the model for the coefficient has to take into account both types of local discontinuities:  the ones at the interface between different types of soft  tissues (fat, liver, muscle etc\ldots) and the ones occurring inside each tissue type due to the presence of small unresolved scatterers.  
The model was introduced in \cite{garnier2023scattered}, we present a simple version that we will use in this work.

\paragraph{Simple first model}
In this context, the propagating medium could be described as a  composite medium,  with a few components, each describing a different tissue type.  Then the presence of small unresolved scatterers could be modeled by \emph{randomly sprinkling} each component with a large but finite number of small or point-like scatterers. 
This looks like a perfectly sound model and has been successfully used to describe blood flow imaging experiments \cite{UltrafastImaging}. If there are enough small scatterers the numerical simulation of the propagation in such a medium will look like an ultrasound imaging experiment.  Most estimates and tools available for the study of composite mediums are valid for a fixed number of scatterers and we do not know how the constants in the different estimates will behave when the number $N$ of scatterers grows very large. 
Moreover,  the state of the art models do not account for the fact that a change in the properties of the scatterers (their density in the medium, their contrast\ldots) induces a change in the acoustic properties of the medium at the macroscopic scale. 
A good way to convince oneself of this is to look at how the experimental phantoms are made.  They usually consist of an incompressible gel with acoustic properties similar to those of water, with microscopic acoustic grains embedded inside the gel everywhere.  Variations at the macro scale in acoustic properties are obtained by changing locally the repartition of the unresolved acoustic grains inside.  The other strong hint towards the fact that macro-scale acoustic properties of tissues can be derived by an homogeneization process is the experimental observation of anisotropy.  Since the compressibility and density are both scalar parameters,  an homogeneization process seems like a necessary way to understand the acoustic anisotropy that can sometimes be observed in certain biological tissues like muscles. 

\paragraph{Advanced model}
In light of the previous paragraph, we aim at providing a model that can link the properties of the scatterers to the effective acoustic parameters of the medium.  The good way to do that is to use a model that preserves the proportion of volume occupied by the scatterers even when the size of the scatterers decreases.  

We start by considering a random distribution of scatterers of characteristic size $r_0>0$ in $\R^d$  and of average density of order one.  We then rescale this distribution using a scaling parameter $\varepsilon>0$ to place the scatterers inside the medium $D$. 

Let $(x_i)_{i \in \mathbb{N}}$ be the point process in $\R^d$ corresponding to the centers of the scatterers.  Consider an open connected domain $\mathcal{Q}$ of measure $r_0^d$ centered at $0$.  Each scatterer is then represented by $S_i=\left\{ x_i + \mathcal{Q}\right\}$. 
We make the following assumptions on $(x_i)_{i \in \mathbb{N}}$:
\begin{itemize}
\item[-]$(x_i)_{i \in \mathbb{N}}$ is stationary, \textit{i.e.} its probability distribution is invariant by translation and ergodic;
\item[-] the scatterers lie at a distance at least $\delta_{min} > 0$ from one another, \textit{i.e.} there exists $\delta_{min} > 0$ such that  $$\forall i \neq j,~dist(s_i, s_j) > \delta_{min}\quad \textrm{a.s.} $$ 
\end{itemize} 
We denote by $\dst S= \cup_i \left\{ x_i + \mathcal{Q}\right\}$.
 We introduce the parameter $n: \R^d \times\Omega \to [n_-,n_+]$
\begin{equation} \label{eq:AN}
 n:= n_0\mathbbm{1}_{\R^d \setminus \overline{S}} + \sum_i n_{S_i}\mathbbm{1}_{S_i},
\end{equation}
where $(n_{S_i})_i$ are independent and identically distributed.  
We then fix $\varepsilon>0$ such that $\varepsilon r_0$ is now a length representing the characteristic size of the scatterers in the medium.  
The natural definition for the parameter $n_D$ in the domain $D$  in the case where scatterers are of size of order $\varepsilon$ is:
\begin{align*}
 n_D(x) =n_\varepsilon(x):=  n\left(\frac{x}{\eps}\right). 
\end{align*}

\begin{remark} Even though the characteristic size of the scatterers is going to zero as the scaling parameter $\varepsilon$ is going to zero,  the volume fraction they occupy in $D$ stays constant as the number of scatterers inside $D$ grows as $\vert D\vert r_0^{-d}\varepsilon^{-d}$.  This is the main difference with the other \emph{small scatterer} models like the Foldy-Lax model.  
\end{remark}

\begin{remark}
There is no need to take the same shape for all the scatterers.  The shape $\mathcal{Q}$ can be replaced by $\mathcal{Q}_i$ where the shapes $\mathcal{Q}_i$ are realizations of an \emph{i.i.d}. process.
\end{remark}

\begin{remark} Our results apply to any  random medium described by an index of the form $f_\varepsilon(x)= f(\frac{x}{\varepsilon})$  where $f:\R^d \times \Omega \to [n_-,n_+]$ is stationary and verifies \eqref{eq:Decaying}. 
\end{remark}

We assume moreover that the distribution of scatterers possess some decorrelation properties.  Namely,  let us denote by $C$ the covariance function of $n$:
\begin{equation}
C(x) := \Cov(n(x), n(0)),  \qquad \forall x\in \R^d.
\end{equation}
We assume that $C$ can be controlled by a bounded sufficiently decaying decreasing function $\Phi:\R^+\to\R^+$ \emph{i.e} such that for some $p > 2(d-1)$ (see remark \ref{rem:remarkA1}):
\begin{equation} \label{eq:Decaying}
\int_{\R^d} \Phi(\vert x\vert )|x|^p \dd x< \infty.
\end{equation}
In particular,  $\Phi$ and $\Phi^{\frac{1}{2}}$ are in $L^1(\R^+)$. 

\begin{figure}

\center
\includegraphics[scale=2]{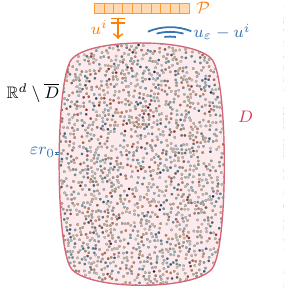}\caption{Illustration of the geometry for an ultrasound imaging experiment. }

\end{figure}

\subsection{Homogenized representation formula}
The corner stone of our analysis is the simplified asymptotic expression for the measurements obtained with quantitative stochastic homogeneization techniques in our previous paper \cite{garnier2023scattered}.  More precisely,  we use the integral representation formula of \cite[theorem 10]{garnier2023scattered}. 
We introduce $G^\star$ the Green function associated to the homogenized problem \textit{i.e.} $G^\star$ is the outgoing solution  in $\mathcal{D}'\left(\R^d\right)$ of all 
\begin{equation}\label{eq:Gstar}
 -\Delta G^\star(\cdot, y) - \tilde{n}^\star  \omega^2G^\star(\cdot, y) = \delta(\cdot - y)  \qquad \textrm{in}\, \R^d,
\end{equation}
with  $\tilde{n}^\star$ being the homogeneized coefficient in the object $D$ and the coefficient of the surrounding medium outside of $D$:  \begin{equation}
 \tilde{n}^\star(x) :=  n_m \mathbbm{1}_{\R^d \setminus \overline{D}}(x) + n^\star\mathbbm{1}_{D}(x), \qquad x \in \R^d.
\end{equation}

For the simplicity of the computations we can make the following assumption that $n_0 = n_m$ and that $\mathbb{E}[n_S]=0$ so that $n^\star=n_0$, $\tilde{n}^\star \equiv n_0$,  and $G^\star= \Gamma^{\frac{\omega}{c^\star}}$ with $c^\star = \frac{1}{\sqrt{n^\star}}$. In that case the coherent wave is the incident field $\Gamma^{\frac{\omega}{c^\star}}(\cdot,x_e)$.  This can be viewed as an effective matched boundary condition.  
\begin{remark}This condition is restrictive in theory but mildly restrictive in practice.  In case of a mismatch of the effective speed of sound,  some reflections will occur at the boundary of the domain.  The first one from the top of the domain $D$ can be filtered in the time domain. The ones occurring at the side of the domain will not pollute too much the measurements in the geometry considered, due to  the directivity of the sensors.  
\end{remark}

We recall that in that case, the scattered field can be approximated by \cite[theorem 10]{garnier2023scattered} 
\begin{align}\label{eq:homogenizedLS}
 \mathcal{U}^s(x_e,y,\omega) := \omega^2 \int_D (n_\varepsilon(x)-n^{\star} ) \Gamma^{k^\star}(x,x_e) \Gamma^{k^\star}(y,x)\dd x, \qquad y\in \R^d \setminus\overline{D},
\end{align}
and the pointwise error control is given in proposition \ref{prop:homolargedomain}.

\subsection{Asymptotic regime}\label{sec:asymptotic}
\paragraph{Typical values for the parameters}

In practical situations, the typical central frequency of the transducers is of the order of a few Mhz, the speed of sound varies from $1400 - 1600$ $m\text{s}^{-1}$, and thus the wavelength is of order $\lambda \sim 1mm $   while the typical diameter of $D$ is of order $10cm$.  Finally the size of the transducer array is usually a few centimeters $2\ell \sim 4cm$(see \cite[Table~2.2]{phdFlavien}). 
The scatterers are usually of typical size $\varepsilon r_0$ of a few micron (think for example of a red blood cell),  around two order of magnitude below the central wavelength.  
Mathematically this can be expressed by
\begin{align*}
\varepsilon r_0 \ll \frac{c^\star}{\omega_c} \ll \ell  \ll  \text{diam}(D).
\end{align*}
\paragraph{The paraxial regime}
In order to get analytic expressions,  we will consider a classical asymptotic regime called the \emph{paraxial regime}  \cite[chapter 6]{garnier2016passive}. This will be done by introducing a small scaling parameter denoted $\eta\ll 1$ and expressing the geometric quantities of the model as functions of $\eta$.
We introduce
\begin{align*}
\omega_c=:\frac{\omega_0}{\eta}, \quad \mathcal{B}=:\frac{\mathcal{B}_0}{\eta}, \quad \mathcal{P} =: \eta^{\frac{1}{2}}\mathcal{P}_0, \quad \ell=:\eta^{\frac{1}{2}}\ell_0,  \quad \varepsilon =: \eta^{\alpha}, \quad \alpha>0. 
\end{align*}
Regarding the points of the domain,  we will say $y\in D$ satisfies the paraxial approximation if its transverse and axial coordinates can be written as:
\begin{align*}
y := \left(\eta^{\frac{1}{2}}y^\perp,y^\shortparallel\right)\qquad \, y^\perp \in \R^{d-1}, y^\shortparallel\in \R^+. 
\end{align*}

For the reminder of the manuscript the notation $y^\perp$ will designate the projection on the transverse component rescaled with respect to the asymptotic parameter $\eta$: 
\begin{align*}
\R^d \longrightarrow & \R^{d-1}\\
y \longmapsto & y^\perp := \eta^{-\frac{1}{2}}\mathbf{P}^\perp (y) 
\end{align*}
where $\mathbf{P}^\perp$ is the projection on the transverse plane.

\begin{remark} \label{rem:alpha}For the approximation of eq. \eqref{eq:homogenizedLS} to be accurate the error term of equation \eqref{eq:homogN} in appendix \ref{appendix:random} needs to be small,  \emph{i.e.} goes to $0$ when $\eta$ goes to zero.   This imposes a lower bound on $\alpha$.  The bound depends on the dimension $d$ and the exponent $p'$ introduced in the appendix.  In dimension $d=3$,  $\alpha> \frac{5}{2}$ is a sufficient condition. 
\end{remark}

\section{The imaging functional}\label{sec:imagingfunc}
\subsection{The classical confocal imaging function} \label{sec:confocal}
In the case where the effective speed of sound $c^\star$ in the medium $D$ is known the classical confocal imaging function can be written:
\begin{align}
I^{c^\star}(z) := \int_{(\mathcal{P} \times \mathcal{P} \times \mathcal{B})} \overline{M(x_e,x_r, \omega)} G^\star(z,x_e) G^\star(z,x_r) \mathrm{d}x_e \dd x_r \dd\omega.\label{eq:kernelI} \qquad z\in D
\end{align}

This function,  also known as the \emph{Kirchhoff migration} function has been extensively studied in various cases (see \cite{fouque2007wave} and references within).

In this context it is straightforward from the representation formula \eqref{eq:homogenizedLS} that the measurement matrix $M$ can be written as
\begin{align}\label{eq:integralM}
M(x_e,x_r,\omega) = \omega^2 \int_D  \left(n(y)-n^\star\right) G^\star(x_r,y)G^\star(x_e,y)\dd y. 
\end{align}
 \begin{remark} Even though $I^{c^\star}$ is defined for $z\in D$ it is important to keep in mind that the imaging domain is a \emph{virtual domain} while the integral in equation \eqref{eq:integralM} happens physically in the real object $D$.   This will be of importance in the rest of the work and justifies the following definition to avoid the confusion between the imaging points and the physical points in the object. 
\end{remark}
\begin{definition}[Virtual Domain] The definition domain for $I^{c^\star}$ will be denoted $D'$. 
\end{definition}
The imaging function can then be expressed as
\begin{align*}
I^{c^\star}(z) = \int_D  \left(n_\varepsilon(y)-n^\star\right) F^{c^\star} (z,y) \dd y,  \qquad z\in D'
\end{align*}
which is an integral operator acting on $n_\varepsilon-n^\star$ with kernel: \begin{align*}
 F^{c^\star} (z,y)=\int_{\mathcal{B}} \omega^2\left(\int_{\mathcal{P}} \Gamma^{\frac{\omega}{c^\star}}(z,x_r)\overline{G^{\star}(y,x_r)} \dd x_r\right)^2\dd \omega, \qquad (z,y)\in D'\times D.
 \end{align*}
 which acts as the so-called \emph{point spread function} for the imaging system.

\subsection{Imaging with an unknown speed of sound}
In our case the effective speed of sound in the medium $D$ is unknown and we can parametrize the imaging function by the speed of sound  $c$ used in the backpropagation.  
\begin{definition}
\begin{align}
I^c(z):= \int_{(\mathcal{P} \times \mathcal{P} \times \mathcal{B})} \overline{M(x_e,x_r, \omega)} \Gamma^{\frac{\omega}{c}}(z,x_e) \Gamma^{\frac{\omega}{c}}(z,x_r) \mathrm{d}x_e \dd x_r \dd\omega,\qquad z\in D'.\label{eq:defIc}
\end{align}
\end{definition}
\begin{remark}
The measurements are done in the time domain on a finite interval $[0,T]$. The imaging domain $D'$'s  axial size is determined by the longuest distance a wave can travel in $\frac{T}{2}$.
Note that now that the speed of sound in the virtual domain is modified the domain $D'$ is now different from $D$. The explicit expression for the mapping $\varphi_c$ between $D$ and $D'$ is given in lemma \ref{lem:PSFparaxial} (eq.  \eqref{eq:defphi}). \end{remark}
We can still express the imaging functional as an integral operator
\begin{align*}
I^c(z) = \int_D  \left(n_\varepsilon(y)-n^\star\right) F^{c} (z,y) \dd y
\end{align*}
acting on $n_\varepsilon-n^\star$ with kernel: \begin{align*}
 F^{c} (z,y)=\int_{\mathcal{B}}\omega^2 \left(\int_{\mathcal{P}} \Gamma^{\frac{\omega}{c}}(z,x_r)\overline{G^{\star}(y,x_r)} \dd x_r\right)^2\dd \omega \qquad z,y\in D'\times D.
 \end{align*}
 which is the new \emph{point spread function} for the imaging system. 
 
 \subsection{Properties of the point spread function}\label{subsec:propertiespsf}

\subsubsection{Focal spot in the case $c=c^\star$}
The point spread function at the correct backpropagation speed $F^{c^\star}(z,y)$ has already been extensively studied in that setting (see for instance \cite{fouque2007wave}).  For a fixed $y_0\in D$, the function 
\begin{align*}
D'&\longrightarrow \mathbb{C}\\z&\longmapsto F^{c^\star}(z,y_0)
\end{align*} presents a peak centered at $y_0$ : its amplitude is of order $\frac{1}{\eta}$ in a small region around $y_0$ called the \emph{focal spot} and negligible outside of this region.  The size of the focal spot depends on the geometry of the problem : the central wavelength, the bandwidth, the numerical aperture...
The point spread function is of course symmetrical in that case since $D=D'$ and $F^{c^\star}(z,y)=F^{c^\star}(y,z)$. 
See fig. \ref{fig:psf}, middle insert, for an exemple corresponding to the geometry of medical ultrasound imaging methods. 

We can introduce a more precise definition of the focal spot.  For this we need to introduce the family of level sets in the image domain:
\begin{align*}
L_\Theta (y_0)\ := \left\{ z\in D',  \left\vert F^{c^\star}(z,y_0)\right\vert =\Theta\right\}.
\end{align*}
Now we consider the family of nested and connected sets $\mathcal{D}'(y_0,\Theta)$ defined as the \emph{minimal volume} with $\mathcal{C}^{0,\alpha}$ boundary (with $\alpha >0$) containing the level set $L_\Theta (y_0)$, \emph{i.e.}
\begin{align*}
\mathcal{D} '(y_0,\Theta):= \underset{  L_\Theta  \subset \Omega'\subset D', \, \Omega' \ni y_0}{\mathrm{argmin\, }} \left\vert \Omega'\right \vert.
\end{align*}
The focal spot can now be defined:
\begin{definition}[Focal spot at threshold $\delta$]
Consider  $0<\delta\ll 1$ and $y_0 \in D$. There exists a $\Theta(\delta)>0$ such that \begin{align*}
 \int_{D'\setminus \overline{\mathcal{D}'(y_0,\Theta(\delta))}}\left\vert F^{c^\star}(z,y_0)\right\vert \dd z <\delta \int_{D'}\left\vert   F^{c^\star}(z,y_0)\right\vert \dd z.
\end{align*}
We denote $\mathcal{D}'_\delta(y_0):= \mathcal{D}'\left	(y_0,\Theta(\delta)\right)$.
\end{definition}

\begin{remark} The existence of $\mathcal{D}'_\delta(y_0)$ is obvious as $F\in L^1(D'\times D)$.  Uniqueness comes from the uniqueness of $\mathcal{D}'(y_0,\Theta)$ for a given $\Theta$ and the monotonicity of the function $$\dst \Theta \mapsto \int_{D'\setminus \overline{\mathcal{D}'(y_0,\Theta)}}\left\vert F^{c^\star}(z,y_0)\right\vert \dd z.$$Such a definition only has a practical interest because $F(\cdot,y_0)$ is a \emph{peaked} function  and therefore in practice the domain $\mathcal{D}'_\delta$ is small compared to the size of the domain, see remark \ref{rem:sizefocal}. 
Therefore $\dst \int_{\mathcal{D}'_\delta(y_0)}\left\vert F^{c^\star}(z,z^\star)\right\vert \dd z$ is a good approximation of $\dst \int_{D'}\left\vert   F^{c^\star}(z,y_0) \right\vert \dd z$.  
\end{remark}
\begin{remark}At this point the reader might wonder what is the necessity for a new technical definition for the focal spot which is  a well known notion in the imaging community.  This definition is actually just an extension of the definition used in the physics community where in practice,  the focal spot is often characterized as the area inside the $-6$dB level set around the peak of the point-spread function. The constraint on the $L^1$ norm is added here for technical reasons,  to be able to approximate quantities of the type $\dst \int_{D'} F^{c^\star}(z,y) f(z) \dd z$ by $\dst \int_{D'_{\delta}(y)} F^{c^\star}(z,y) f(z) \dd z$ for some various $f$. 
\end{remark}

\begin{remark}\label{rem:sizefocal}In practice,  for $\delta \approx 0.1$ the focal spot's sizes are approximatively the Rayleigh's resolution limit and behave like $\frac{c^\star}{\omega_c}$.  More precisely,  in the broadband regime, in the case of a linear array probe the transverse size of the focal spot at $y_0\in D$ is given by:
\begin{align*}
\frac{2 c^\star}{\omega_c} \frac{y_0^\shortparallel}{\ell}
\end{align*}
and its axial size is
\begin{align*}
\frac{2c^\star}{\mathcal{B}},
\end{align*} where $\mathcal{B}$ is the bandwidth and $\frac{y_0^\shortparallel}{\ell}$ is the numerical aperture at depth $y_0^\shortparallel$. See figure \ref{fig:psf}. 
\end{remark}

\subsubsection{Focal spot in the case $c\neq c^\star$}
In the case where $c\neq c^\star$ the function \begin{align*}
D'&\longrightarrow \mathbb{C}\\z&\longmapsto F^{c^\star}(z,y_0)
\end{align*} still has the same qualitative properties as in the previous case : there exists a small region of $D'$ where most of the signal concentrates. 

We can still define a notion of focal spot: 
\begin{definition}[Focal spot at threshold $\delta$ and speed $c$]
Consider  $0<\delta \ll 1$ and $y_0\in D$. There exists a domain $\mathcal{D}^{c}_\delta(\varphi_c(y_0))'\subset D'$  such that \begin{align*}
 \int_{D'\setminus \overline{\mathcal{D}^c_\delta(\varphi_c(y_0))'}}\left\vert F^{c}(z,y_0)\right\vert \dd z <\delta\int_{D'}\left\vert   F^{c}(z,y_0 )\right\vert \dd z ,
\end{align*} where $\varphi_c$ is defined in \eqref{eq:defphi}.
\end{definition}
\begin{remark} A simple interpretation for the focal spot is the following : for a point $y_0\in D$ and a given back-propagation speed of sound $c$, the focal spot $\mathcal{D}'_{\delta}$ indicates the area in the image $D'$ where the contribution of point $y_0$ will appear. 
\end{remark}

The well-posedness of that definition is a direct consequence of the following lemma:
\begin{lemma}[Point spread function in the paraxial approximation]\label{lem:PSFparaxial}
 Let $(z,y)\in D'\times D$ satisfying the paraxial approximation, \emph{i.e} consider $0<\eta\ll 1$ and
\begin{align*}
y=\left( \eta^{\frac{1}{2}} y^\perp,y^\shortparallel \right)\in D, \quad \mathrm{and} \quad  z=\left(\eta^{\frac{1}{2}} z^\perp,z^\shortparallel, \right)\in D'.
\end{align*}
 In the paraxial regime in dimension $3$ the point spread function has the expression:
  \begin{multline}\label{eq:PSF2bis}
 F^{c}(z,y) = \eta^{-1} \left(\frac{c}{c^\star}\right)^2 \frac{\ell_0^4}{\left(16\pi^2|z||\varphi_c(y)| \right)^2}    \int_{\mathcal{B}_0}\omega^2   e^{i\frac{2\omega}{\eta c}\left( |z| - |\varphi_c(y)| \right)} e^{i\frac{\omega}{c}\left(1-\left(\frac{c^\star}{c}\right)^2\right) \frac{\vert \varphi_c(y)^\perp\vert^2}{\vert \varphi_c(y)\vert}}
\\ \mathcal{G}^2\left(\frac{\omega\ell_0}{c}\left( \frac{z^\perp}{|z|}- \frac{\varphi_c(y)^\perp}{|\varphi_c(y)|}\right) , \frac{\omega\ell_0^2}{c}\left(  \frac{1}{|z|} - \left(\frac{c}{c^\star}\right)^2\frac{1}{|\varphi_c(y)|}\right)\right)\dd \omega+ \mathcal{O}(1),
 \end{multline}
 or equivalently
   \begin{multline}\label{eq:PSF2}
 F^{c}(z,y) = \eta^{-1} \left(\frac{c^\star}{c}\right)^2 \frac{\ell_0^4}{\left(16\pi^2|y||\varphi_c^{-1}(z)| \right)^2}    \int_{\mathcal{B}_0}\omega^2   e^{i\frac{2\omega}{\eta c^\star}\left( |\varphi_c^{-1}(z)| - |y| \right)}  e^{i\frac{\omega}{c^\star}\left(\left(\frac{c}{c^\star}\right)^2-1\right) \frac{\vert \varphi_c^{-1}(z)^\perp\vert^2}{\vert \varphi^{-1}_c(z)\vert}}
\\ \mathcal{G}^2\left(\frac{\omega\ell_0}{c^\star}\left( \frac{\varphi_c^{-1}(z)^\perp}{|\varphi_c^{-1}(z)|}- \frac{y^\perp}{|y|}\right) , \frac{\omega\ell_0^2}{c^\star}\left( \left(\frac{c^\star}{c}\right)^2 \frac{1}{|\varphi_c^{-1}(z)|} - \frac{1}{|y|}\right)\right)\dd \omega + \mathcal{O}(1),
 \end{multline}
 where
 \begin{align*}
 \mathcal{G}(\xi_1,\xi_2) := \int_{[-1,1]^2} e^{-i x_e^\perp \cdot \xi_1 + i \frac{\left\vert x_e^\perp\right\vert^2}{2} \xi_2} \dd x_e^\perp \qquad \xi_1 \in \R^2, \xi_2\in \R,
 \end{align*}
and
\begin{align}\label{eq:defphi}
\begin{aligned}
D& \longrightarrow D'\\
y &\longmapsto \varphi_c(y):= \left(\eta^\frac{1}{2} \left( \frac{c}{c^\star}\right)^2y^\perp,  \frac{c}{c^\star} y^\shortparallel\right).
\end{aligned}
\end{align}
\end{lemma}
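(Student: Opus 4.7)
The plan is to insert the explicit three-dimensional free-space Green function $\Gamma^k(x,y) = e^{ik|x-y|}/(4\pi|x-y|)$ into
\[
F^{c}(z,y) = \int_{\mathcal B} \omega^2 \left(\int_{\mathcal{P}} \Gamma^{\omega/c}(z,x_r)\,\overline{\Gamma^{\omega/c^\star}(y,x_r)} \, \dd x_r \right)^{\!2}\! \dd\omega,
\]
and perform a Fresnel-type expansion under the paraxial rescaling. Writing $x_r = (\eta^{1/2}\ell_0 u,0)$ with $u \in [-1,1]^2$ (so $\dd x_r = \eta\ell_0^2\dd u$), $z = (\eta^{1/2}z^\perp,z^\shortparallel)$, $y = (\eta^{1/2}y^\perp,y^\shortparallel)$, and changing variables $\omega \mapsto \omega/\eta$ so the new $\omega$ ranges in $\mathcal{B}_0$, the wavenumbers become $\omega/(\eta c)$ and $\omega/(\eta c^\star)$ with $\omega = O(1)$.

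First I would expand the distances to Fresnel order,
\[
|z-x_r| = |z| - \eta\ell_0\,u\cdot z^\perp/|z| + \eta\ell_0^2|u|^2/(2|z|) + O(\eta^2),
\]
and similarly for $|y-x_r|$. Because each wavenumber is $O(1/\eta)$, the $O(1)$ and $O(\eta)$ terms in the distance contribute to the phase, while the $O(\eta^2)$ remainder yields only an $O(\eta)$ phase error; in the amplitude the leading substitution $|z-x_r|^{-1}\approx |z|^{-1}\approx (z^\shortparallel)^{-1}$ is justified by the same scaling. Collecting the $u$-linear phase produces $-u\cdot\xi_1$ with $\xi_1 = (\omega\ell_0/c)\bigl(z^\perp/|z| - (c/c^\star)y^\perp/|y|\bigr)$, which by the identity $\varphi_c(y)^\perp/|\varphi_c(y)| = (c/c^\star)y^\perp/|y|$ equals the announced $\xi_1$. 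The $u$-quadratic phase produces $\xi_2|u|^2/2$ with $\xi_2 = (\omega\ell_0^2/c)\bigl(1/|z|-(c/c^\star)^2/|\varphi_c(y)|\bigr)$, using $1/|y| = (c/c^\star)/|\varphi_c(y)|$ to leading order. This identifies the $\dd u$ integral as $\mathcal G(\xi_1,\xi_2)$.

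The subtle step is the treatment of the $u$-independent phase $(\omega/(\eta c))|z| - (\omega/(\eta c^\star))|y|$. Splitting
\[
\tfrac{\omega}{\eta c}|z| - \tfrac{\omega}{\eta c^\star}|y| = \tfrac{\omega}{\eta c}\bigl(|z|-|\varphi_c(y)|\bigr) + \tfrac{\omega}{\eta c}\bigl(|\varphi_c(y)|-(c/c^\star)|y|\bigr),
\]
the Taylor expansion
\[
|\varphi_c(y)|-(c/c^\star)|y| = \tfrac{\eta}{2}\tfrac{c}{c^\star}\bigl((c/c^\star)^2-1\bigr)|y^\perp|^2/|y^\shortparallel| + O(\eta^2)
\]
shows that the second term survives as an $O(1)$ phase contribution, and a short algebra using $|\varphi_c(y)^\perp|^2 = (c/c^\star)^4|y^\perp|^2$ and $|\varphi_c(y)|\approx (c/c^\star)|y^\shortparallel|$ rewrites it as $\tfrac12(\omega/c)\bigl(1-(c^\star/c)^2\bigr)|\varphi_c(y)^\perp|^2/|\varphi_c(y)|$. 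Squaring (since $F^c$ contains $I^2$) doubles both the free-propagation phase and this quadratic correction, producing exactly the two exponential factors in \eqref{eq:PSF2bis}.

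Finally I would bookkeep prefactors: one factor of $I$ contributes $\eta\ell_0^2/((4\pi)^2|z||y|)$, so $I^2$ contributes $\eta^2\ell_0^4/((4\pi)^4|z|^2|y|^2)$; combined with $\omega^2\dd\omega = \omega^2\eta^{-3}\dd\omega$ from the frequency rescaling, this gives the prefactor $\eta^{-1}\ell_0^4/(16\pi^2|z||y|)^2 \cdot \omega^2$, and substituting $|y|=(c^\star/c)|\varphi_c(y)|$ yields the claimed $(c/c^\star)^2\ell_0^4/(16\pi^2|z||\varphi_c(y)|)^2$. The error contributions are $O(\eta)$ relative to a leading term of order $\eta^{-1}$, hence $O(1)$ in absolute value, uniformly on compact subsets of $D'\times D$ bounded away from $\mathcal P$. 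The second form \eqref{eq:PSF2} follows either by performing the symmetric bookkeeping around $z$ rather than $y$ (using the $O(\eta)$ gap $|\varphi_c^{-1}(z)|-(c^\star/c)|z|$), or by directly rewriting $\xi_1,\xi_2$, the propagation phase, and the prefactor via $\varphi_c^{-1}$ using the exact leading-order identities $|\varphi_c^{-1}(z)|/c^\star = |z|/c$ and $\varphi_c^{-1}(z)^\perp/|\varphi_c^{-1}(z)|=(c^\star/c)z^\perp/|z|$; both changes of variables preserve the $\mathcal G^2$ factor. The main obstacle is precisely this phase bookkeeping: proving that the only $O(1)$ surviving correction to the leading propagation phase is the stated Fresnel quadratic in $y^\perp$ (resp.\ $z^\perp$), and that every other remainder is genuinely $O(\eta)$ so that the overall error in $F^c$ is $O(1)$.
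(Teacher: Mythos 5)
Your proposal is correct and follows essentially the same route as the paper's proof: a Fresnel/paraxial expansion of the two Green-function phases, identification of the probe integral with $\mathcal{G}(\xi_1,\xi_2)$, and isolation of the surviving $\mathcal{O}(1)$ phase through the gap $|\varphi_c(y)|-\tfrac{c}{c^\star}|y|=\mathcal{O}(\eta)$, with all the key identities ($\varphi_c(y)^\perp/|\varphi_c(y)|=(c/c^\star)y^\perp/|y|$, the quadratic residual, the $(c/c^\star)^2$ prefactor) matching the paper's. If anything, your bookkeeping of the amplitude prefactor and of the $\omega\mapsto\omega/\eta$ rescaling is more explicit than what the paper writes out.
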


\begin{remark} In dimension $2$,  \eqref{eq:PSF2bis} and \eqref{eq:PSF2} differ only by the following respective prefactors in the integrand
\begin{align*}
\frac{32\pi \sqrt{\vert z\vert \vert \varphi_c(y)\vert c c^\star}}{\omega},\qquad
\frac{32\pi \sqrt{\vert y\vert \vert \varphi_c^{-1}(z)\vert c c^\star}}{\omega}.
\end{align*}
and $\mathcal{G}$:
 \begin{align*}
 \mathcal{G}(\xi_1,\xi_2) := \int_{[-1,1]^2} e^{-i x_e^\perp  \xi_1 + i \frac{\left\vert x_e^\perp\right\vert^2}{2} \xi_2} \dd x_e^\perp \qquad \xi_1 \in \R, \xi_2\in \R.
 \end{align*}
\end{remark}

\begin{proof}

We suppose that $x_e:=(\eta^{\frac{1}{2}}x_e^\perp, 0)$ and that $z$ is in the paraxial regime $z=(\eta^{\frac{1}{2}}z^\perp,z^\shortparallel)$. We have:
\begin{equation*}
|x_e-z|=|z|\left(1+\frac{\eta}{2}\frac{|x_e^\perp|^2}{|z|^2}-\eta\frac{\langle z^\perp,x_e^\perp\rangle}{|z|^2}\right)+\mathcal{O}(\eta^{2}).
\end{equation*}
For a point $y:=(\eta^{\frac{1}{2}}y^\perp, y^\shortparallel)$:
\begin{multline*}
\frac{c^\star}{c}|x_e-z|-|x_e-y|=\left(\frac{c^\star}{c}|z|-|y|\right)-\eta\left(\frac{c^\star}{c}\frac{\langle z^\perp,x_e^\perp\rangle}{|z|}-\frac{\langle y^\perp,x_e^\perp\rangle}{|y|}\right)\\+\frac{\eta}{2} |x_e^\perp|^2\left(\frac{c^\star}{c}\frac{1}{|z|}-\frac{1}{|y|}\right)+\mathcal{O}(\eta^{2}).
 \end{multline*}
 Since the point-spread function is a peaked function we want to find the center of the focal spot that we denote $\varphi_c(y)\in D'$ and express the point spread function with respect to $z-\varphi_c(y)$. 
 We can find this center $\varphi_c(y)$ by noting that since the frequency behaves as $\eta^{-1}$, a necessary condition is that if $z=\varphi_c(y)$ then the term in $\eta^0$  in the phase must cancel. 
 Cancelling the zero order term gives the first necessary condition for $\varphi_c(y)$  
\begin{equation}\label{eq:varphi1}|\varphi_c(y)|=\frac{c}{c^\star}|y|+\mathcal{O}(\eta).\end{equation}
This gives, since $y$ is in the paraxial regime:
\begin{align*}
\varphi_c(y)^\shortparallel:= \frac{c}{c^\star}y^\shortparallel.
\end{align*}
We are now looking for a necessary condition on the center of the focal spot $\varphi_c(y)$'s transverse coordinate $\varphi_c(y)^\perp$. 
Integrating over the probe and the bandwidth, for $z=\varphi_c(y)$
 \begin{multline*}
 F^{c}(z,y) = \eta^{-1} \left(\frac{c^\star}{c}\right)^2 \frac{\ell_0^4}{\left(16\pi^2|y||\varphi_c(y)| \right)^2}    \int_{\mathcal{B}_0}\omega^2   e^{i\frac{\omega}{c}\left(1-\left(\frac{c^\star}{c}\right)^2\right) \frac{\vert \varphi_c(y)^\perp\vert^2}{\vert \varphi_c(y)\vert}} 
\\ \mathcal{G}^2\left(\frac{\omega\ell_0}{c^\star}\left( \left(\frac{c^\star}{c}\right)^2\frac{\varphi_c(y)^\perp}{|y|}- \frac{y^\perp}{|y|}\right) , \frac{\omega\ell_0^2}{c^\star\vert y\vert }\left( \left(\frac{c^\star}{c}\right)^2 - 1\right)\right)\dd \omega+ \mathcal{O}(1),
 \end{multline*}
 where
 \begin{align*}
 \mathcal{G}(\xi_1,\xi_2) := \int_{[-1,1]^2} e^{-i x_e^\perp \cdot \xi_1 + i \frac{\left\vert x_e\right\vert^2}{2} \xi_2} \dd x_e^\perp \qquad \xi_1 \in \R^2, \xi_2\in \R.
 \end{align*}
Since $\mathcal{G}$ is a peaked function at the origin for both arguments,  this gives us the transverse position of the center of the focal spot by setting:
 \begin{align*}
\left(\frac{c^\star}{c}\right)^2\frac{\varphi_c(y)^\perp}{|y|}- \frac{y^\perp}{|y|}=0
 \end{align*}
 and we get the second necessary condition:
 \begin{equation}\label{eq:varphi2}\langle \varphi_c(y)^\perp,x_e^\perp\rangle=\left(\frac{c}{c^\star}\right)^2\langle y^\perp,x_e^\perp\rangle.
\end{equation}
 We therefore set
\begin{equation*} \varphi_c(y):=\left(\eta^{\frac{1}{2}}\left(\frac{c}{c^\star}\right)^2y^\perp, \frac{c}{c^\star}y^\shortparallel\right).\end{equation*} 
We compute:
\begin{align*}
\vert \varphi_c(y)\vert - \frac{c}{c^\star}\vert y\vert = \frac{\eta}{2} \left(1-\left(\frac{c^\star}{c}\right)^2\right)\frac{\vert \varphi_c(y)^\perp\vert^2}{\vert \varphi_c(y)\vert}
\end{align*}
 Going back to the PSF, after a change of variable in $\omega$,  the following asymptotic expression when $\eta\rightarrow 0$: 
 
  \begin{multline*}
 F^{c}(z,y) = \eta^{-1} \left(\frac{c^\star}{c}\right)^2 \frac{\ell_0^4}{\left(16\pi^2|y||\varphi_c^{-1}(z)| \right)^2}    \int_{\mathcal{B}_0}\omega^2   e^{i\frac{2\omega}{\eta c^\star}\left( |\varphi_c^{-1}(z)| - |y| \right)}  e^{i\frac{\omega}{c^\star}\left(\left(\frac{c}{c^\star}\right)^2-1\right) \frac{\vert \varphi_c^{-1}(z)^\perp\vert^2}{\vert \varphi^{-1}_c(z)\vert}}
\\ \mathcal{G}^2\left(\frac{\omega\ell_0}{c^\star}\left( \frac{\varphi_c^{-1}(z)^\perp}{|\varphi_c^{-1}(z)|}- \frac{y^\perp}{|y|}\right) , \frac{\omega\ell_0^2}{c^\star}\left( \left(\frac{c^\star}{c}\right)^2 \frac{1}{|\varphi_c^{-1}(z)|} - \frac{1}{|y|}\right)\right)\dd \omega+ \mathcal{O}(1).
 \end{multline*}
%
 \end{proof}

%
%

\begin{corollary}\label{cor:DLF} For \mbox{$z= \varphi_c(y_0) + (\eta^{\frac{1}{2}} \Delta z^\perp,\eta \Delta z^\shortparallel)$, } the expression of the point spread function becomes 
  \begin{multline}\label{eq:PSFDelta}
 F^{c}(z ,y_0) = \eta^{-1}\left(\frac{c^\star}{c}\right)^2 \frac{\ell_0^4}{\left(16\pi^2\vert y_0\vert^2 \right)^2}    \int_{\mathcal{B}_0}\omega^2 e^{i\frac{\omega}{\vert y_0\vert c^\star} \vert y_0^\perp\vert^2 \left(  \left(\frac{c}{c^\star}\right)^2-1 \right)}    e^{i\frac{2\omega}{c}  \Delta z^\shortparallel}  e^{i\frac{\omega}{\left\vert y_0\right\vert c^\star}\left(\frac{c^\star}{c}\right)^2|\Delta z^\perp|^2} 
\\ \mathcal{G}^2\left(\frac{\omega\ell_0}{c^\star  \left\vert y_0\right\vert} \left(\frac{c^\star}{c}\right)^2 \Delta z^\perp , \frac{\omega\ell_0^2}{c^\star\left\vert y_0\right\vert} \left( \left(\frac{c^\star}{c}\right)^2-1\right)\right)\dd \omega  + \mathcal{O}(1). 
 \end{multline}

If ${\mathcal{B}_0} <\omega_0$  we are in the so-called  \emph{broadband regime}(see section \ref{sec:geometry} and \cite[chapter 6]{garnier2016passive}) where the expression above simplifies to:
\begin{multline}\label{PSFbroadband}
F^{c^\star}(z,y) = \eta^{-1}\left(\frac{c^\star}{c}\right)^2 \frac{\omega_0^2\ell_0^4}{\left(16\pi^2(\vert y_0\vert)^2 \right)^2} e^{i\frac{2\omega_0}{c}  \Delta z^\shortparallel}   e^{i\frac{\omega_0}{\vert y_0\vert c^\star} \vert y_0^\perp\vert^2 \left(  \left(\frac{c}{c^\star}\right)^2-1 \right)}  e^{i\frac{\omega_0}{\left\vert y_0\right\vert c^\star}\left(\frac{c^\star}{c}\right)^2|\Delta z^\perp|^2}   \\ \mathrm{sinc\, } \left(\frac{B}{c^\star}\left(\frac{c^\star}{c}\Delta z^\shortparallel+  \left(\frac{c^\star}{c}\right)^2\frac{1}{\vert y_0\vert }|\Delta z^\perp|^2 +\left( \left(\frac{c}{c^\star}\right)^2-1\right)\frac{1}{\vert y_0\vert }|y_0^\perp|^2 \right)\right)
\\ \mathcal{G}^2\left(\frac{\omega_0\ell_0}{c^\star  \left\vert y_0\right\vert} \left(\frac{c^\star}{c}\right)^2 \Delta z^\perp , \frac{\omega_0\ell_0^2}{c^\star\left\vert y_0\right\vert} \left( \left(\frac{c^\star}{c}\right)^2-1\right)\right)\dd \omega  + \mathcal{O}(1) .
\end{multline}
\end{corollary}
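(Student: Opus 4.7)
The approach is a direct substitution of $z = \varphi_c(y_0) + (\eta^{1/2}\Delta z^\perp, \eta\Delta z^\shortparallel)$ into the paraxial PSF \eqref{eq:PSF2bis} of Lemma \ref{lem:PSFparaxial}, followed by a systematic Taylor expansion of each factor in $\eta$. The governing rule is that the fast phase $\frac{2\omega}{\eta c}(|z|-|\varphi_c(y_0)|)$ must be expanded up to order $\eta$ in its argument so that all finite phase contributions are retained, whereas the slow phase, the arguments of $\mathcal{G}^2$, and the amplitude prefactor can be kept at leading order in $\eta$, their subleading corrections contributing only to the $O(1)$ remainder once multiplied by the overall $\eta^{-1}$ prefactor.

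For the fast phase, I would use $|z|^2-|\varphi_c(y_0)|^2 = (z-\varphi_c(y_0))\cdot(z+\varphi_c(y_0))$ together with the explicit forms $\varphi_c(y_0) = (\eta^{1/2}(c/c^\star)^2 y_0^\perp, (c/c^\star)y_0^\shortparallel)$ and $z-\varphi_c(y_0) = (\eta^{1/2}\Delta z^\perp, \eta\Delta z^\shortparallel)$, divide by $|z|+|\varphi_c(y_0)| = 2(c/c^\star)|y_0|+O(\eta)$, and multiply by $2\omega/(\eta c)$; this produces the displacement contributions $\frac{2\omega}{c}\Delta z^\shortparallel$ and $\frac{\omega(c^\star/c)^2}{c^\star|y_0|}|\Delta z^\perp|^2$ that appear in \eqref{eq:PSFDelta}. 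The slow phase, which depends only on $y_0$, simplifies under $\varphi_c(y_0)^\perp = (c/c^\star)^2 y_0^\perp$ and $|\varphi_c(y_0)| = (c/c^\star)|y_0|+O(\eta)$ directly to $\frac{\omega}{c^\star|y_0|}((c/c^\star)^2-1)|y_0^\perp|^2$. For the $\mathcal{G}^2$ arguments, the observation $1/|z|-1/|\varphi_c(y_0)|=O(\eta)$ implies that $z^\perp/|z| - \varphi_c(y_0)^\perp/|\varphi_c(y_0)|$ reduces at leading order to $\Delta z^\perp/|\varphi_c(y_0)|$, producing (after multiplication by $\omega\ell_0/c$) the first argument of \eqref{eq:PSFDelta}; the longitudinal argument collapses analogously to $\frac{\omega\ell_0^2}{c^\star|y_0|}((c^\star/c)^2-1)$, and the amplitude prefactor $(c/c^\star)^2\ell_0^4/(16\pi^2|z||\varphi_c(y_0)|)^2$ reduces to $(c^\star/c)^2\ell_0^4/(16\pi^2|y_0|^2)^2$.

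For the broadband formula \eqref{PSFbroadband}, since $\mathcal{B}_0 \ll \omega_0$ I would freeze every slowly $\omega$-varying factor (the $\omega^2$ weight, the $\mathcal{G}^2$ arguments, and the slow phase) at $\omega = \omega_0$, then collect the three $\omega$-linear contributions into a single coefficient $A(\Delta z, y_0)$ and apply $\int_{\omega_0-\mathcal{B}}^{\omega_0+\mathcal{B}} e^{i\omega A}\,\mathrm{d}\omega = 2\mathcal{B}\,e^{i\omega_0 A}\sinc(\mathcal{B} A)$ to produce the sinc factor. The main obstacle throughout is the $\eta$-bookkeeping in the fast phase: one must verify that the $O(\eta^2)$ remainder in the expansion of $|z|-|\varphi_c(y_0)|$ and the $O(\eta)$ corrections to the amplitude and to the arguments of $\mathcal{G}^2$ all translate, after the $\eta^{-1}$ rescaling and integration over $\omega\in\mathcal{B}_0$, into contributions that are uniformly $O(1)$ and hence belong to the error term.
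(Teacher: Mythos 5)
Your plan is essentially the paper's own proof: substitute $z=\varphi_c(y_0)+(\eta^{1/2}\Delta z^\perp,\eta\Delta z^\shortparallel)$ into the paraxial PSF of Lemma \ref{lem:PSFparaxial}, expand the fast phase to order $\eta$ and everything else to leading order, and absorb the rest into the $\mathcal{O}(1)$ remainder after the $\eta^{-1}$ prefactor; the broadband step (freezing slowly varying factors at $\omega_0$ and integrating the linear phase into a sinc) is also the intended, though unwritten, argument for \eqref{PSFbroadband}. The only algebraic difference is that you expand $|z|-|\varphi_c(y_0)|$ via the difference of squares while the paper expands the square root directly.

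One point deserves care: your identity $(z-\varphi_c(y_0))\cdot(z+\varphi_c(y_0))$ also produces the linear cross term $2\eta\left(\tfrac{c}{c^\star}\right)^2\langle\Delta z^\perp,y_0^\perp\rangle$, which after multiplication by $\tfrac{2\omega}{\eta c}$ contributes a finite phase $\tfrac{2\omega}{c^\star}\tfrac{\langle y_0^\perp,\Delta z^\perp\rangle}{|y_0|}$ that does not appear in \eqref{eq:PSFDelta} (the paper's own expansion of $|z|$ silently drops it, whereas the analogous cross term is kept in Proposition \ref{prop:imagingspeckle}). If you carry your computation through faithfully you will obtain this extra factor, and you should either retain it or justify its absence (e.g.\ restrict to $y_0^\perp=0$, as in the practical procedure with $s_0=0$); as written, your claim that the difference-of-squares step yields only the $\Delta z^\shortparallel$ and $|\Delta z^\perp|^2$ contributions is not consistent with your own method.
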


\begin{proof}

\textcolor{black}{
Let $z:=\varphi_c(y_0)+ \Delta z$ with $\Delta z :=(\eta^\frac{1}{2}\Delta z^\perp, \eta \Delta z^\shortparallel)$.}
We can write:
\textcolor{black}{
\begin{equation*}
|z|=|\varphi_c(y_0)|\left(1+\frac{\eta}{2}\frac{|\Delta z^\perp|^2}{|\varphi_c(y_0)|^2}+\eta\frac{\Delta z^\shortparallel}{|\varphi_c(y_0)|}\right)+\mathcal{O}(\eta^{\frac{3}{2}}).
\end{equation*}
}
and then:
\textcolor{black}{
\begin{equation*}\frac{c^\star}{c}|z|-|y_0|=\frac{c^\star}{c}|\varphi_c(y_0)|-|y_0|+\frac{\eta}{2}\frac{c^\star}{c}\frac{|\Delta z^\perp|^2}{|\varphi_c(y_0)|}+\eta\frac{c^\star}{c}\Delta z^\shortparallel+\mathcal{O}(\eta^{\frac{3}{2}}).
\end{equation*}}
Using \eqref{eq:defphi} we get
\begin{equation*}\frac{c^\star}{c}|z|-|y_0|=\frac{\eta}{2}\frac{\vert y_0^\perp\vert^2}{\vert y_0\vert} \left(  \left(\frac{c}{c^\star}\right)^2-1 \right)+\frac{\eta}{2}\left(\frac{c^\star}{c}\right)^2\frac{|\Delta z^\perp|^2}{|y_0|}+\eta\frac{c^\star}{c}\Delta z^\shortparallel+\mathcal{O}(\eta^{\frac{3}{2}}).
\end{equation*}
 Similarly
 \begin{equation*}\frac{c^\star}{c}\frac{1}{|z|}-\frac{1}{|y_0|}=\frac{1}{|y_0|}\left(\left(\frac{c^\star}{c}\right)^2-1\right)+\mathcal{O}(\eta^{\frac{1}{2}}).
\end{equation*}
We can also write

 \begin{equation*}\begin{array}{ll}\vsd\dst\frac{c^\star}{c}\frac{\langle z^\perp,x_e^\perp\rangle}{|z|}-\frac{\langle y_0^\perp,x_e^\perp\rangle}{|y_0|}&\dst=\frac{1}{|y_0|}\left(\left(\frac{c^\star}{c}\right)^2\langle z^\perp,x_e^\perp\rangle-\langle y_0^\perp,x_e^\perp\rangle\right)+\mathcal{O}(\eta)\\\vsd \dst&\dst=\frac{1}{|y_0|}\left(\left(\frac{c^\star}{c}\right)^2\langle \varphi_c(y_0)^\perp,x_e^\perp\rangle-\langle y_0^\perp,x_e^\perp\rangle\right)\\ & \dst +\frac{1}{|y_0|}\left(\frac{c^\star}{c}\right)^2\langle\Delta z^\perp ,x_e^\perp\rangle  +\mathcal{O}(\eta^\frac{1}{2}).\end{array}
\end{equation*}
Using \eqref{eq:defphi} again we get:
 \begin{equation*}\frac{c^\star}{c}\frac{\langle z^\perp,x_e^\perp\rangle}{|z|}-\frac{\langle y_0^\perp,x_e^\perp\rangle}{|y_0|}=\frac{1}{|y_0|}\left(\frac{c^\star}{c}\right)^2\langle\Delta z^\perp ,x_e^\perp\rangle +\mathcal{O}(\eta^\frac{1}{2}).
\end{equation*}
 Finally we obtain the following asymptotic expansion for the phase term in the imaging domain \emph{i.e. } at point $z=\varphi_c(y_0)+ \Delta z \in D'$,  with $\Delta z :=(\eta^\frac{1}{2}\Delta z^\perp, \eta \Delta z^\shortparallel)$ and $y_0\in D$:
 \begin{multline*}
\frac{c^\star}{c}|x_e-z|-|x_e-y_0|=\frac{\eta}{2}\frac{\vert y_0^\perp\vert^2}{\vert y_0\vert} \left(  \left(\frac{c}{c^\star}\right)^2-1 \right) + \frac{\eta}{2}\left(\frac{c^\star}{c}\right)^2\frac{|\Delta z^\perp|^2}{|y_0|}+\eta\frac{c^\star}{c}\Delta z^\shortparallel\\-\frac{\eta}{|y_0|}\left(\frac{c^\star}{c}\right)^2\langle\Delta z^\perp ,x_e^\perp\rangle+\frac{\eta}{2} \frac{|x_e^\perp|^2}{|y_0|}\left(\left(\frac{c^\star}{c}\right)^2-1\right)+\mathcal{O}(\eta^{\frac{3}{2}}).
 \end{multline*}

%

Plugging in the phase term in the expression for the point spread function we get that \eqref{eq:PSF2} becomes
  \begin{multline*}
 F^{c}(z ,y_0) = \eta^{-1}\left(\frac{c^\star}{c}\right)^2 \frac{\ell_0^4}{\left(16\pi^2(y_0^\shortparallel)^2 \right)^2}    \int_{\mathcal{B}_0}\omega^2 e^{i\frac{\omega}{\vert y_0\vert c^\star} \vert y_0^\perp\vert^2 \left(  \left(\frac{c}{c^\star}\right)^2-1 \right)}  e^{i\frac{2\omega}{c}  \Delta z^\shortparallel}  e^{i\frac{\omega}{\left\vert y_0\right\vert c^\star}\left(\frac{c^\star}{c}\right)^2|\Delta z^\perp|^2} 
\\ \mathcal{G}^2\left(\frac{\omega\ell_0}{c^\star  \left\vert y_0\right\vert} \left(\frac{c^\star}{c}\right)^2 \Delta z^\perp , \frac{\omega\ell_0^2}{c^\star\left\vert y_0\right\vert} \left( \left(\frac{c^\star}{c}\right)^2-1\right)\right)\dd \omega  + \mathcal{O}(1). 
 \end{multline*} 
 
\end{proof}

\begin{figure}[h!]

\centering
%
%
\begin{tikzpicture}[scale=1,every node/.style={scale=1}]

\begin{axis}[%
width=3.36in,
height=2.88in,
at={(0.923in,0.642in)},
scale only axis,
point meta min=0.187692122921525,
point meta max=199.959807269087,
axis on top,
xmin=-10.0334448160535,
xmax=10.0334448160535,
xlabel style={font=\color{white!15!black}},
xlabel={$\xi_1$},
y dir=reverse,
ymin=-10.0334448160535,
ymax=10.0334448160535,
ylabel style={font=\color{white!15!black}},
ylabel={$\xi_2$},
axis background/.style={fill=white},
title style={font=\bfseries},
title={$\left\vert\mathcal{G}(\xi_1,\xi_2)\right\vert$},
colormap={mymap}{[1pt] rgb(0pt)=(0.2422,0.1504,0.6603); rgb(1pt)=(0.2444,0.1534,0.6728); rgb(2pt)=(0.2464,0.1569,0.6847); rgb(3pt)=(0.2484,0.1607,0.6961); rgb(4pt)=(0.2503,0.1648,0.7071); rgb(5pt)=(0.2522,0.1689,0.7179); rgb(6pt)=(0.254,0.1732,0.7286); rgb(7pt)=(0.2558,0.1773,0.7393); rgb(8pt)=(0.2576,0.1814,0.7501); rgb(9pt)=(0.2594,0.1854,0.761); rgb(11pt)=(0.2628,0.1932,0.7828); rgb(12pt)=(0.2645,0.1972,0.7937); rgb(13pt)=(0.2661,0.2011,0.8043); rgb(14pt)=(0.2676,0.2052,0.8148); rgb(15pt)=(0.2691,0.2094,0.8249); rgb(16pt)=(0.2704,0.2138,0.8346); rgb(17pt)=(0.2717,0.2184,0.8439); rgb(18pt)=(0.2729,0.2231,0.8528); rgb(19pt)=(0.274,0.228,0.8612); rgb(20pt)=(0.2749,0.233,0.8692); rgb(21pt)=(0.2758,0.2382,0.8767); rgb(22pt)=(0.2766,0.2435,0.884); rgb(23pt)=(0.2774,0.2489,0.8908); rgb(24pt)=(0.2781,0.2543,0.8973); rgb(25pt)=(0.2788,0.2598,0.9035); rgb(26pt)=(0.2794,0.2653,0.9094); rgb(27pt)=(0.2798,0.2708,0.915); rgb(28pt)=(0.2802,0.2764,0.9204); rgb(29pt)=(0.2806,0.2819,0.9255); rgb(30pt)=(0.2809,0.2875,0.9305); rgb(31pt)=(0.2811,0.293,0.9352); rgb(32pt)=(0.2813,0.2985,0.9397); rgb(33pt)=(0.2814,0.304,0.9441); rgb(34pt)=(0.2814,0.3095,0.9483); rgb(35pt)=(0.2813,0.315,0.9524); rgb(36pt)=(0.2811,0.3204,0.9563); rgb(37pt)=(0.2809,0.3259,0.96); rgb(38pt)=(0.2807,0.3313,0.9636); rgb(39pt)=(0.2803,0.3367,0.967); rgb(40pt)=(0.2798,0.3421,0.9702); rgb(41pt)=(0.2791,0.3475,0.9733); rgb(42pt)=(0.2784,0.3529,0.9763); rgb(43pt)=(0.2776,0.3583,0.9791); rgb(44pt)=(0.2766,0.3638,0.9817); rgb(45pt)=(0.2754,0.3693,0.984); rgb(46pt)=(0.2741,0.3748,0.9862); rgb(47pt)=(0.2726,0.3804,0.9881); rgb(48pt)=(0.271,0.386,0.9898); rgb(49pt)=(0.2691,0.3916,0.9912); rgb(50pt)=(0.267,0.3973,0.9924); rgb(51pt)=(0.2647,0.403,0.9935); rgb(52pt)=(0.2621,0.4088,0.9946); rgb(53pt)=(0.2591,0.4145,0.9955); rgb(54pt)=(0.2556,0.4203,0.9965); rgb(55pt)=(0.2517,0.4261,0.9974); rgb(56pt)=(0.2473,0.4319,0.9983); rgb(57pt)=(0.2424,0.4378,0.9991); rgb(58pt)=(0.2369,0.4437,0.9996); rgb(59pt)=(0.2311,0.4497,0.9995); rgb(60pt)=(0.225,0.4559,0.9985); rgb(61pt)=(0.2189,0.462,0.9968); rgb(62pt)=(0.2128,0.4682,0.9948); rgb(63pt)=(0.2066,0.4743,0.9926); rgb(64pt)=(0.2006,0.4803,0.9906); rgb(65pt)=(0.195,0.4861,0.9887); rgb(66pt)=(0.1903,0.4919,0.9867); rgb(67pt)=(0.1869,0.4975,0.9844); rgb(68pt)=(0.1847,0.503,0.9819); rgb(69pt)=(0.1831,0.5084,0.9793); rgb(70pt)=(0.1818,0.5138,0.9766); rgb(71pt)=(0.1806,0.5191,0.9738); rgb(72pt)=(0.1795,0.5244,0.9709); rgb(73pt)=(0.1785,0.5296,0.9677); rgb(74pt)=(0.1778,0.5349,0.9641); rgb(75pt)=(0.1773,0.5401,0.9602); rgb(76pt)=(0.1768,0.5452,0.956); rgb(77pt)=(0.1764,0.5504,0.9516); rgb(78pt)=(0.1755,0.5554,0.9473); rgb(79pt)=(0.174,0.5605,0.9432); rgb(80pt)=(0.1716,0.5655,0.9393); rgb(81pt)=(0.1686,0.5705,0.9357); rgb(82pt)=(0.1649,0.5755,0.9323); rgb(83pt)=(0.161,0.5805,0.9289); rgb(84pt)=(0.1573,0.5854,0.9254); rgb(85pt)=(0.154,0.5902,0.9218); rgb(86pt)=(0.1513,0.595,0.9182); rgb(87pt)=(0.1492,0.5997,0.9147); rgb(88pt)=(0.1475,0.6043,0.9113); rgb(89pt)=(0.1461,0.6089,0.908); rgb(90pt)=(0.1446,0.6135,0.905); rgb(91pt)=(0.1429,0.618,0.9022); rgb(92pt)=(0.1408,0.6226,0.8998); rgb(93pt)=(0.1383,0.6272,0.8975); rgb(94pt)=(0.1354,0.6317,0.8953); rgb(95pt)=(0.1321,0.6363,0.8932); rgb(96pt)=(0.1288,0.6408,0.891); rgb(97pt)=(0.1253,0.6453,0.8887); rgb(98pt)=(0.1219,0.6497,0.8862); rgb(99pt)=(0.1185,0.6541,0.8834); rgb(100pt)=(0.1152,0.6584,0.8804); rgb(101pt)=(0.1119,0.6627,0.877); rgb(102pt)=(0.1085,0.6669,0.8734); rgb(103pt)=(0.1048,0.671,0.8695); rgb(104pt)=(0.1009,0.675,0.8653); rgb(105pt)=(0.0964,0.6789,0.8609); rgb(106pt)=(0.0914,0.6828,0.8562); rgb(107pt)=(0.0855,0.6865,0.8513); rgb(108pt)=(0.0789,0.6902,0.8462); rgb(109pt)=(0.0713,0.6938,0.8409); rgb(110pt)=(0.0628,0.6972,0.8355); rgb(111pt)=(0.0535,0.7006,0.8299); rgb(112pt)=(0.0433,0.7039,0.8242); rgb(113pt)=(0.0328,0.7071,0.8183); rgb(114pt)=(0.0234,0.7103,0.8124); rgb(115pt)=(0.0155,0.7133,0.8064); rgb(116pt)=(0.0091,0.7163,0.8003); rgb(117pt)=(0.0046,0.7192,0.7941); rgb(118pt)=(0.0019,0.722,0.7878); rgb(119pt)=(0.0009,0.7248,0.7815); rgb(120pt)=(0.0018,0.7275,0.7752); rgb(121pt)=(0.0046,0.7301,0.7688); rgb(122pt)=(0.0094,0.7327,0.7623); rgb(123pt)=(0.0162,0.7352,0.7558); rgb(124pt)=(0.0253,0.7376,0.7492); rgb(125pt)=(0.0369,0.74,0.7426); rgb(126pt)=(0.0504,0.7423,0.7359); rgb(127pt)=(0.0638,0.7446,0.7292); rgb(128pt)=(0.077,0.7468,0.7224); rgb(129pt)=(0.0899,0.7489,0.7156); rgb(130pt)=(0.1023,0.751,0.7088); rgb(131pt)=(0.1141,0.7531,0.7019); rgb(132pt)=(0.1252,0.7552,0.695); rgb(133pt)=(0.1354,0.7572,0.6881); rgb(134pt)=(0.1448,0.7593,0.6812); rgb(135pt)=(0.1532,0.7614,0.6741); rgb(136pt)=(0.1609,0.7635,0.6671); rgb(137pt)=(0.1678,0.7656,0.6599); rgb(138pt)=(0.1741,0.7678,0.6527); rgb(139pt)=(0.1799,0.7699,0.6454); rgb(140pt)=(0.1853,0.7721,0.6379); rgb(141pt)=(0.1905,0.7743,0.6303); rgb(142pt)=(0.1954,0.7765,0.6225); rgb(143pt)=(0.2003,0.7787,0.6146); rgb(144pt)=(0.2061,0.7808,0.6065); rgb(145pt)=(0.2118,0.7828,0.5983); rgb(146pt)=(0.2178,0.7849,0.5899); rgb(147pt)=(0.2244,0.7869,0.5813); rgb(148pt)=(0.2318,0.7887,0.5725); rgb(149pt)=(0.2401,0.7905,0.5636); rgb(150pt)=(0.2491,0.7922,0.5546); rgb(151pt)=(0.2589,0.7937,0.5454); rgb(152pt)=(0.2695,0.7951,0.536); rgb(153pt)=(0.2809,0.7964,0.5266); rgb(154pt)=(0.2929,0.7975,0.517); rgb(155pt)=(0.3052,0.7985,0.5074); rgb(156pt)=(0.3176,0.7994,0.4975); rgb(157pt)=(0.3301,0.8002,0.4876); rgb(158pt)=(0.3424,0.8009,0.4774); rgb(159pt)=(0.3548,0.8016,0.4669); rgb(160pt)=(0.3671,0.8021,0.4563); rgb(161pt)=(0.3795,0.8026,0.4454); rgb(162pt)=(0.3921,0.8029,0.4344); rgb(163pt)=(0.405,0.8031,0.4233); rgb(164pt)=(0.4184,0.803,0.4122); rgb(165pt)=(0.4322,0.8028,0.4013); rgb(166pt)=(0.4463,0.8024,0.3904); rgb(167pt)=(0.4608,0.8018,0.3797); rgb(168pt)=(0.4753,0.8011,0.3691); rgb(169pt)=(0.4899,0.8002,0.3586); rgb(170pt)=(0.5044,0.7993,0.348); rgb(171pt)=(0.5187,0.7982,0.3374); rgb(172pt)=(0.5329,0.797,0.3267); rgb(173pt)=(0.547,0.7957,0.3159); rgb(175pt)=(0.5748,0.7929,0.2941); rgb(176pt)=(0.5886,0.7913,0.2833); rgb(177pt)=(0.6024,0.7896,0.2726); rgb(178pt)=(0.6161,0.7878,0.2622); rgb(179pt)=(0.6297,0.7859,0.2521); rgb(180pt)=(0.6433,0.7839,0.2423); rgb(181pt)=(0.6567,0.7818,0.2329); rgb(182pt)=(0.6701,0.7796,0.2239); rgb(183pt)=(0.6833,0.7773,0.2155); rgb(184pt)=(0.6963,0.775,0.2075); rgb(185pt)=(0.7091,0.7727,0.1998); rgb(186pt)=(0.7218,0.7703,0.1924); rgb(187pt)=(0.7344,0.7679,0.1852); rgb(188pt)=(0.7468,0.7654,0.1782); rgb(189pt)=(0.759,0.7629,0.1717); rgb(190pt)=(0.771,0.7604,0.1658); rgb(191pt)=(0.7829,0.7579,0.1608); rgb(192pt)=(0.7945,0.7554,0.157); rgb(193pt)=(0.806,0.7529,0.1546); rgb(194pt)=(0.8172,0.7505,0.1535); rgb(195pt)=(0.8281,0.7481,0.1536); rgb(196pt)=(0.8389,0.7457,0.1546); rgb(197pt)=(0.8495,0.7435,0.1564); rgb(198pt)=(0.86,0.7413,0.1587); rgb(199pt)=(0.8703,0.7392,0.1615); rgb(200pt)=(0.8804,0.7372,0.165); rgb(201pt)=(0.8903,0.7353,0.1695); rgb(202pt)=(0.9,0.7336,0.1749); rgb(203pt)=(0.9093,0.7321,0.1815); rgb(204pt)=(0.9184,0.7308,0.189); rgb(205pt)=(0.9272,0.7298,0.1973); rgb(206pt)=(0.9357,0.729,0.2061); rgb(207pt)=(0.944,0.7285,0.2151); rgb(208pt)=(0.9523,0.7284,0.2237); rgb(209pt)=(0.9606,0.7285,0.2312); rgb(210pt)=(0.9689,0.7292,0.2373); rgb(211pt)=(0.977,0.7304,0.2418); rgb(212pt)=(0.9842,0.733,0.2446); rgb(213pt)=(0.99,0.7365,0.2429); rgb(214pt)=(0.9946,0.7407,0.2394); rgb(215pt)=(0.9966,0.7458,0.2351); rgb(216pt)=(0.9971,0.7513,0.2309); rgb(217pt)=(0.9972,0.7569,0.2267); rgb(218pt)=(0.9971,0.7626,0.2224); rgb(219pt)=(0.9969,0.7683,0.2181); rgb(220pt)=(0.9966,0.774,0.2138); rgb(221pt)=(0.9962,0.7798,0.2095); rgb(222pt)=(0.9957,0.7856,0.2053); rgb(223pt)=(0.9949,0.7915,0.2012); rgb(224pt)=(0.9938,0.7974,0.1974); rgb(225pt)=(0.9923,0.8034,0.1939); rgb(226pt)=(0.9906,0.8095,0.1906); rgb(227pt)=(0.9885,0.8156,0.1875); rgb(228pt)=(0.9861,0.8218,0.1846); rgb(229pt)=(0.9835,0.828,0.1817); rgb(230pt)=(0.9807,0.8342,0.1787); rgb(231pt)=(0.9778,0.8404,0.1757); rgb(232pt)=(0.9748,0.8467,0.1726); rgb(233pt)=(0.972,0.8529,0.1695); rgb(234pt)=(0.9694,0.8591,0.1665); rgb(235pt)=(0.9671,0.8654,0.1636); rgb(236pt)=(0.9651,0.8716,0.1608); rgb(237pt)=(0.9634,0.8778,0.1582); rgb(238pt)=(0.9619,0.884,0.1557); rgb(239pt)=(0.9608,0.8902,0.1532); rgb(240pt)=(0.9601,0.8963,0.1507); rgb(241pt)=(0.9596,0.9023,0.148); rgb(242pt)=(0.9595,0.9084,0.145); rgb(243pt)=(0.9597,0.9143,0.1418); rgb(244pt)=(0.9601,0.9203,0.1382); rgb(245pt)=(0.9608,0.9262,0.1344); rgb(246pt)=(0.9618,0.932,0.1304); rgb(247pt)=(0.9629,0.9379,0.1261); rgb(248pt)=(0.9642,0.9437,0.1216); rgb(249pt)=(0.9657,0.9494,0.1168); rgb(250pt)=(0.9674,0.9552,0.1116); rgb(251pt)=(0.9692,0.9609,0.1061); rgb(252pt)=(0.9711,0.9667,0.1001); rgb(253pt)=(0.973,0.9724,0.0938); rgb(254pt)=(0.9749,0.9782,0.0872); rgb(255pt)=(0.9769,0.9839,0.0805)},
colorbar
]
\addplot [forget plot] graphics [xmin=-10.0334448160535, xmax=10.0334448160535, ymin=-10.0334448160535, ymax=10.0334448160535] {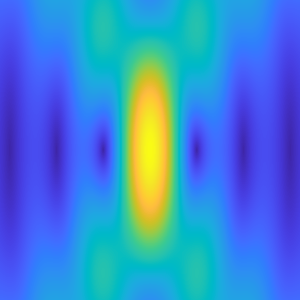};
\end{axis}
\end{tikzpicture}%
\caption{ Plot of $(\xi_1,\xi_2) \mapsto \vert \mathcal{G}(\xi_1,\xi_2)\vert$. }

\end{figure}

\subsection{Discussion} \label{sec:discussionpsf}
We discuss here \eqref{eq:PSFDelta} and \eqref{eq:Fcbroadband}.
For $c=c^\star$ we recover the classical point spread function of a linear array imaging system in the paraxial regime.
In the so-called  \emph{broadband regime} \cite[chapter 6]{garnier2016passive}, for $c=c^\star$, \eqref{eq:Fcbroadband} gives us the classical expected  resolution shown on the middle insert of figure \ref{fig:psf} (see remark \ref{rem:sizefocal}).

The use of an incorrect backpropagation speed $c\neq c^\star$ has three main effects on the focal spot:
\begin{enumerate}
\item The focal spot is not centered around $y_0$ anymore but around the image of $y_0$ by the mapping  between the physical object $D$ and its image $D'$ : \begin{align*}
D &\longrightarrow  D'\\
y_0 & \longmapsto \varphi_c(y_0).
\end{align*}This is a \emph{first order effect} in the sense that the center of the focal spot $\varphi_c(y_0)$ moves proportionnaly to $ \frac{c}{c^\star}-1$ in the axial direction.  
\item The amplitude at the center of the focal spot is decreased.  This can be seen easily by looking at the amplitudes on fig. \ref{fig:psf}.  For a more precise quantification of this effect one can look at the expression \eqref{eq:PSFDelta} at the center of the focal spot (with $z=\varphi_c(y)$) and see that the amplitude is proportional to \begin{align*}
\int_{\mathcal{B}_0} \omega^2 e^{i\omega\frac{|y_0^\perp|^2}{\vert y_0\vert c^\star}\left( \left(\frac{c}{c^\star}\right)^2-1\right)} \mathcal{G}^2\left(0,\frac{\omega \ell_0^2}{c^\star \vert y_0\vert} \left( \left(\frac{c^\star}{c}\right)^2-1\right)\right) \dd \omega.
\end{align*} This is a notable effect as the sensitivity to $\left(\frac{c^\star}{c}\right)^2-1$ is quite high.  With realistic values corresponding to practical situations in medical ultrasound imaging we can see that a $15\%$ error on the speed of sound can lead to an order of magnitude decrease in the amplitude at the center, see fig. \ref{fig:I(z(c))}.  
\item The shape of the focal spot is modified,  it is more spread out and does not have unique local maxima.  However  looking at \eqref{eq:PSFDelta} gives us that $z\mapsto F^c(z,y_0)$ is of order $\eta^{-1}$ in a region of typical size of the order $\eta$ in the axial direction and $\eta^{\frac{1}{2}}$ in the transverse directions which is the same regime as the case $c=c^\star$. The spreading out is due to the shape of $\xi_1\mapsto\mathcal{G}(\xi_1,\xi_2)$ which presents a sharper peak (more contrasted and with a smaller width at half maximum) for $\xi_2$ close to $0$.  Even though this effect is more visually striking,  it is harder to quantify compared to the effect described in the second point.  See \cite{lambert2022}. 
\end{enumerate}
 For a visual representation of these effects we show an illustration on figure \ref{fig:psf} where we plot the point spread function $z\longmapsto  F^{c}(z,y_0)$ in three different cases : $c<c^{\star}, c=c^{\star}$, and $c>c^{\star}$.

\begin{figure}[h!]

\centering
\input{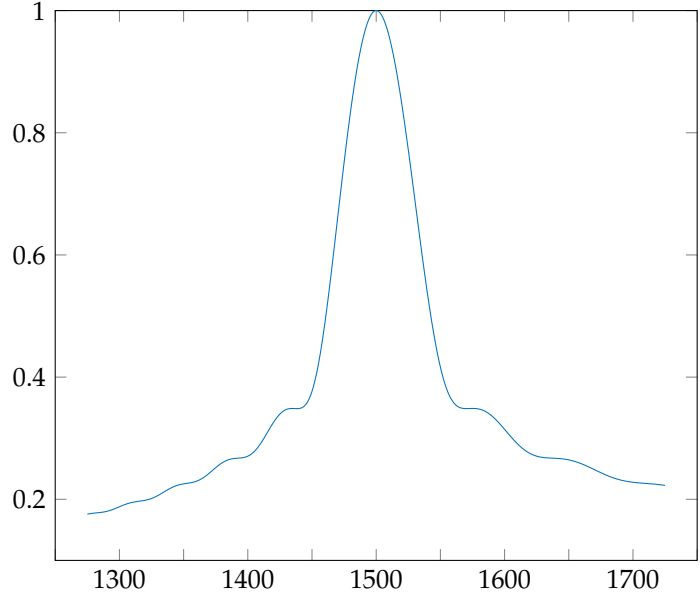}
\caption{ Normalized plot of $\dst c \mapsto \vert\int_\mathcal{B} \omega^2 \mathcal{G}^2\left(0,\frac{\omega \ell^2}{c^\star \vert y_0\vert} \left( \frac{c^\star}{c}\right)^2-1\right) \dd \omega\vert$ for realistic values of the parameters $\ell,  \vert y_0\vert, \mathcal{B},\ldots$ (see section \ref{sec:numerical}).  \label{fig:I(z(c))}}

\end{figure}


\begin{figure}[h!]

\input{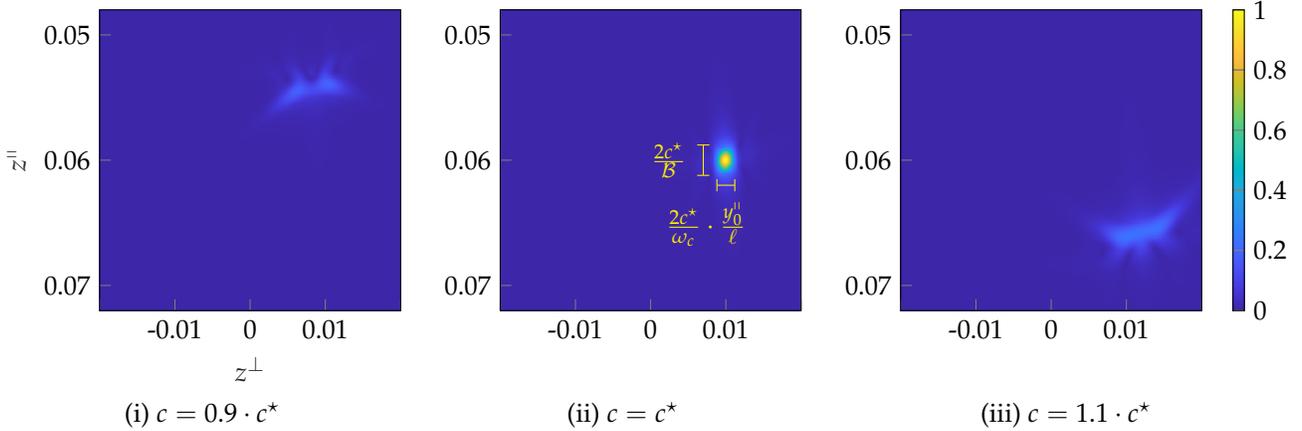} 
\caption{Shape of the point spread function $z\mapsto \left\vert F^c(z,y_0)\right\vert$ with three different backpropagation speed.  Here $y_0=(0.01,0.06)$ for all three graphs.  The quantity $\frac{y_0^\shortparallel}{\ell}$ is the numerical aperture at $y_0$  where $\ell$ is the length of the linear probe $\mathcal{P}$.  The focal spot at threshold $0.1$ is the area in yellow/light blue.  }\label{fig:psf}

\end{figure}

\subsection{Focusing criterion}\label{subsub:focusing}

Consider a point in the medium $y_0\in D$.  Recall that for each $c$ the center of the focal spot in the image is at $\varphi_c(y_0)\in D'$.   Let us define the function
\begin{definition}[Confocal trace at $y_0$]\label{def:confocaltrace}
\begin{align}
c\longmapsto \mathcal{F}_{y_0}(c):= F^c\left( \varphi_c(y_0), y_0\right). 
\end{align}
\end{definition}

\begin{remark} \label{rem:constructionP}Introduced this way, the function $\mathcal{F}_{y_0}$ is an abstract construction but it can be understood more easily if one considers the following thought experiment.  Assume that there is a point-like single isolated reflector located at $y_0$ in an unknown homogeneous medium whose acoustic property $c^\star$ is to be determined. 
In this case the imaging function has a simple approximation:
\begin{align*}
I^c(z)\propto F^c(z,y_0).
\end{align*}
 Up to a horizontal translation of the probe we can assume that this single reflector is located on the axis $y_0^\perp =0$. Of course,  since we do not know the propagation speed inside the medium,  the only thing we can access experimentally is the travel time from the reflector to the center of the probe that we can denote $t_0$ (via the \emph{D.O.R.T} method for example \cite{prada1996decomposition}).  
But that travel time $t_0$ is all one needs to plot the function $c \mapsto I^c(z(c)) $ with $z(c)=(0,c t_0)$.  By noting that $z(c)$ is exactly $\varphi_c(y_0)$ one can see that $c \mapsto I^c(z(c)) $  corresponds  to the value of the pixel at the center of the focal spot for every possible value of $c$,  which is exactly our \emph{confocal trace} $\mathcal{F}$ introduced \emph{i.e. }\begin{align*}
I^c(z(c)) = \mathcal{F}(c).
\end{align*}  The speed of sound $c^\star$ can then be recovered using the estimator introduced in the  proposition \ref{prop:amplitude} that follows.  Once $c^\star$ is recovered,  the actual position $z^\star$ of the reflector can also be recovered. 
\end{remark}
\begin{proposition}[Amplitude characterization]\label{prop:amplitude}
\begin{align*}
c^\star = \mathrm{argmax}_{c} \left\vert \mathcal{F}_{y_0}(c)\right\vert.
\end{align*}
\end{proposition}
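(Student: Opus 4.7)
The plan is to substitute $\Delta z = 0$ into the explicit PSF expression of Corollary~\ref{cor:DLF} and then reduce the problem to a triangle--inequality argument on a double integral. Evaluating \eqref{eq:PSFDelta} at $z = \varphi_c(y_0)$, i.e.\ $\Delta z^\perp = 0$ and $\Delta z^\shortparallel = 0$, yields
\begin{equation*}
\mathcal{F}_{y_0}(c) = \eta^{-1}\Bigl(\frac{c^\star}{c}\Bigr)^{2} \frac{\ell_0^4}{(16\pi^2 |y_0|^2)^2}\, I(c) + \mathcal{O}(1),\qquad I(c) := \int_{\mathcal{B}_0} \omega^2 e^{i\omega \alpha(c)}\, \mathcal{G}^2(0, \omega\beta(c))\, d\omega,
\end{equation*}
where $\alpha(c) := |y_0^\perp|^2 ((c/c^\star)^2-1)/(|y_0|c^\star)$ and $\beta(c) := \ell_0^2 ((c^\star/c)^2-1)/(c^\star|y_0|)$. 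Both quantities vanish precisely at $c = c^\star$, and the positivity of $\mathcal{G}^2(0, 0) = 16$ ensures $I(c^\star) > 0$.

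The key idea is to use the factorised form of $\mathcal{G}(0, \cdot)$. When $\xi_1 = 0$, the defining integral separates, so $\mathcal{G}(0, \xi) = \bigl(\int_{-1}^1 e^{ix^2 \xi/2} dx\bigr)^2$ and hence
\begin{equation*}
\mathcal{G}^2(0, \omega\beta) = \int_{[-1,1]^4} e^{i\omega\beta (x_1^2+x_2^2+x_3^2+x_4^2)/2}\, dx.
\end{equation*}
Fubini rewrites $I(c)$ as a single oscillatory integral parametrised by $x \in [-1,1]^4$,
\begin{equation*}
I(c) = \int_{[-1,1]^4} \int_{\mathcal{B}_0} \omega^2 e^{i\omega Q(x, c)}\, d\omega\, dx, \qquad Q(x, c) := \alpha(c) + \beta(c) |x|^2/2,
\end{equation*}
and the triangle inequality combined with $\omega^2 \ge 0$ yields
\begin{equation*}
|I(c)| \le \int_{[-1,1]^4} \int_{\mathcal{B}_0} \omega^2\, d\omega\, dx = 16 \int_{\mathcal{B}_0} \omega^2 d\omega = I(c^\star).
\end{equation*}
Equality holds if and only if $\omega^2 e^{i\omega Q(x, c)}$ has constant argument on $\mathcal{B}_0\times[-1,1]^4$, which (since $\mathcal{B}_0$ has nonempty interior) forces $Q(x, c) \equiv 0$ almost everywhere in $x$. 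Because $\beta(c) \neq 0$ whenever $c \neq c^\star$, the zero set $\{x : |x|^2 = -2\alpha(c)/\beta(c)\}$ is at most a $3$-sphere of Lebesgue measure zero in $[-1,1]^4$; the inequality is therefore strict, yielding $|I(c)| < I(c^\star)$ for every $c \neq c^\star$.

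The main subtlety concerns the prefactor $(c^\star/c)^2$, which is monotone in $c$ and in principle contributes a nonzero linear-in-$c$ slope at $c^\star$ that must be dominated by the stationary peak of $|I|$. This is handled by observing that the second-order decay rate of $|I(c)|$ around $c^\star$ involves the large factor $(\omega_0 \ell_0^2/(c^\star|y_0|))^2$ of the paraxial regime (Section~\ref{sec:asymptotic}), so the resulting quadratic decay of $|I|$ overwhelms the slow $\mathcal{O}(1)$ monotonic drift of the prefactor and pins the argmax of $|\mathcal{F}_{y_0}|$ at $c = c^\star$ up to corrections vanishing in the asymptotic limit $\eta \to 0$.
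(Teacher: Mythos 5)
Your proposal is correct in substance and reaches the conclusion by a genuinely different, and in fact more careful, route than the paper. The paper also starts by setting $\Delta z=0$ in \eqref{eq:PSFDelta}, but then simply invokes the ``peakedness'' of $\mathcal{G}$ (and, in the broadband regime, the explicit $\mathrm{sinc}\times\mathcal{G}^2$ formula \eqref{eq:Fcbroadband}) to assert that the maximum sits at $c=c^\star$. You instead exploit the factorisation $\mathcal{G}(0,\xi)=\bigl(\int_{-1}^1 e^{ix^2\xi/2}\,dx\bigr)^2$ to rewrite $I(c)$ as a single oscillatory integral over $\mathcal{B}_0\times[-1,1]^4$ with real phase $\omega Q(x,c)$, and then get $|I(c)|\le I(c^\star)$ from the triangle inequality, with strictness for $c\neq c^\star$ because the zero set of $Q(\cdot,c)$ is a sphere of measure zero. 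This yields a clean, regime-independent proof that $|I|$ is \emph{strictly} maximised at $c=c^\star$, which the paper never actually establishes. You are also right to flag the prefactor $(c^\star/c)^2$ as the real subtlety: since $\tfrac{d}{dc}|I|$ vanishes at $c^\star$ while the prefactor has a nonzero slope there, the exact argmax of the leading-order term is displaced from $c^\star$ by roughly $c^\star/\tilde\kappa$, where $\tilde\kappa\sim\bigl(\omega_0\ell_0^2/(c^\star|y_0|)\bigr)^2$ is the curvature of the peak. One caveat on your resolution: that factor is $\mathcal{O}(1)$ in the scaling parameter $\eta$ (it is the Fresnel-type number $\omega_c\ell^2/(c^\star|y_0|)$, numerically large in practice but not divergent as $\eta\to 0$), so the displacement does not strictly vanish in the asymptotic limit; the proposition holds only up to this small, fixed correction. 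The paper's own proof silently ignores this issue, so your treatment is, if anything, the more honest of the two.
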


\begin{proof}By taking $\Delta z = 0$ in equation \eqref{eq:PSFDelta} we get:
\begin{align*}
\mathcal{F}_{y_0}(c) =\eta^{-1}\left(\frac{c^\star}{c}\right)^2 \frac{\ell_0^4}{\left(16\pi^2(y_0^\shortparallel)^2 \right)^2}    \int_{\mathcal{B}_0}\omega^2   e^{i\frac{\omega}{y_0^\shortparallel c^\star} \vert y_0^\perp\vert^2 \left(\left(\frac{c}{c^\star}\right)^2 -1\right)}  \mathcal{G}^2\left(0, \frac{\omega\ell_0^2}{c^\star y_0^\shortparallel} \left( \left(\frac{c^\star}{c}\right)^2-1\right)\right)\dd \omega  + \mathcal{O}(1).
 \end{align*} 
With similar arguments as in the second point of discussion \ref{sec:discussionpsf} above we recognize a peaked function and see that $c\mapsto\mathcal{F}_{y_0}(c)$ has a maximum at $c=c^\star$. 
 It is easier to see in the so-called \emph{broadband regime} \cite[chapter 6]{garnier2016passive} (${\mathcal{B}_0} <\omega_0$) where the expression above simplifies to:
 \begin{multline}
 \mathcal{F}_{y_0}(c) =\eta^{-1}\left(\frac{c^\star}{c}\right)^2 \frac{\ell_0^4 \omega_0^2  }{\left(16\pi^2(y_0^\shortparallel)^2 \right)^2} \mathrm{sinc}\left( \frac{B}{y_0^\shortparallel c^\star} \vert y_0^\perp\vert^2  \left(\left(\frac{c}{c^\star}\right)^2-1\right)\right)  e^{i\frac{\omega_0}{y_0^\shortparallel c^\star} \vert y_0^\perp\vert^2  \left(\left(\frac{c}{c^\star}\right)^2-1\right) } \\  \mathcal{G}^2\left(0, \frac{\omega_0\ell_0^2}{c^\star y_0^\shortparallel} \left( \left(\frac{c^\star}{c}\right)^2-1\right)\right).\label{eq:Fcbroadband}
 \end{multline}

\end{proof}

\begin{remark}[About the resolution of the estimator]
Looking at \eqref{eq:Fcbroadband} we can see that \mbox{$c\mapsto \mathcal{F}_{y_0}(c)$} has a peak at $c=c^\star$ and that the width at half-maximum in the broadband regime  depends on the numerical aperture at the depth of the focusing point $y_0$, the central frequency $\omega_c$ and is given by : \begin{align*}
\frac{2c^\star y_0^\shortparallel}{\omega_c \ell^2}.
\end{align*}
\end{remark}

\section{Speed of sound estimation in the tissue-mimicking medium}\label{sec:speedofsoundspeckle}

\subsection{The dual focal spot}
With simple symmetry considerations and by exchanging the role of $c$ and $c^\star$ we can see that for a fixed point $z_0\in D'$ the function
 \begin{align*}
D&\longrightarrow D \\ y&\longmapsto F^{c}(z_0,y)
\end{align*}
will have the same qualitative properties as  $ z\longmapsto F^{c}(z,y_0)$. It is then natural to introduce the:
\begin{definition}[Dual focal spot at threshold $\delta$ and speed $c$]
Consider  $0<\delta \ll 1$ and $z_0\in D'$.  Following the same construction as the focal spot,  there exists a domain $\mathcal{D}^c_\delta(\varphi_c^{-1}(z_0))\subset D$  such that \begin{align*}
 \int_{D\setminus \overline{\mathcal{D}^c_\delta(\varphi_c^{-1}(z_0))}}\left\vert F^{c}(z_0,y)\right\vert \dd y <\delta \int_{D}\left\vert   F^{c}(z_0,y)\right\vert \dd y .
\end{align*}
\end{definition}

\begin{remark}
Similarly to the focal spot,  the dual focal spot is not centered at $z_0$ but at \mbox{$\varphi_c^{-1}(z_0)$.}
\end{remark}
\begin{remark}A simple interpretation for the dual focal spot is the following : for a point in the image $z_0\in D'$ and a given back-propagation speed of sound $c$, the dual focal spot $\mathcal{D}^c_\delta(\varphi_c^{-1}(z_0))$ indicates the area in the medium $D$ that mostly contributes to the value of pixel $I^{c}(z_0)$ in the image.  
\end{remark}

\subsection{Locally probing the micro-structure}

In light of the previous definition,  the imaging function depends  on the material's micro-structure  just in the dual focal spot area :\begin{align*}
I^c(z) = &\int_D  \left(n_\varepsilon(y)-n^\star\right) F^c(z,y) \dd y\\
 =& \int_{\mathcal{D}^c_\delta(\varphi_c^{-1}(z_0))}  \left(n_\varepsilon(y)-n^\star\right) F^c(z,y) \dd y + \mathcal{O}(\delta).
\end{align*} This observation is the first step in the construction of the virtual guide star.

The second step is based on the following observation : for a fixed point satisfying the paraxial approximation $z_0=\left(\eta^{\frac{1}{2}}z_0^\perp,z_0^\shortparallel\right)$,  $c\mapsto I^c(z_0)$ probes the  moving area $\mathcal{D}^c_\delta(\varphi_c^{-1}(z_0))$ in the physical domain $D$.  Its center (as a function of $c$) is given by $$c\longmapsto \varphi^{-1}_c(z_0) = \left(\left(\frac{c^\star}{c}\right)^2\eta^{\frac{1}{2}} z_0^\perp,  \frac{c^\star}{c} z_0^\shortparallel \right)$$ and its  size increases with $\vert c-c^\star\vert. $ 
Therefore,  for a fixed $y_0\in D$ the function
\begin{align*}
c \longmapsto I^c\left(\varphi_c(y_0)\right)
\end{align*} probes the area $\mathcal{D}^c_\delta(y_0)$, always centered at $y_0$.   Of course,  its size is still dependent on $\vert c-c^\star\vert$.  

In practice,  $\varphi_c$ is unknown as it depends on $c$ (known) and $c^\star$ (unknown).  However,  as done in \cite{bureau2024reflection}, let us define,  for $s_0\in \R^{d-1}$ and $t_0\in \R^+$:
\begin{align*}
\psi_c(s_0,t_0) := \left( c^2 s_0, c t_0\right).
\end{align*}
Now if we compute \begin{align*}
c\longmapsto I^c\left(\psi_c(s_0,t_0)\right)
\end{align*}
we are in fact computing  $c \mapsto I^c\left(\varphi_c(y_0)\right)$ with an (unknown but fixed) \begin{align*}
y_0 := \left(\eta^{\frac{1}{2}}\left(c^\star\right)^2 s_0, c^\star t_0\right). 
\end{align*}
\begin{remark} For $s_0=0$,  similarly as in remark \ref{rem:constructionP}, $t_0$ corresponds to the travel time from $y_0$ to the center of the probe at speed $c^\star$. 
\end{remark}
From now on,  we choose $s_0\in \R^{d-1}$, $t_0\in \R^+$ and we study the dependency with respect to $c$ of the following functional:
\begin{align}\label{eq:Icpsic}
 I^c\left(\psi_c(s_0,t_0)\right)=& I^c\left(\varphi_c(y_0)\right)\\
 =&\int_{\mathcal{D}^c_\delta(y_0)}  \left(n_\varepsilon(y)-n^\star\right) F^c(\varphi_c(y_0),y) \dd y + \mathcal{O}(\delta).
\end{align}
Since $\mathcal{D}^c_\delta(y_0)$ is a \emph{small} domain centered at $y_0$ we can see that for $y\in \mathcal{D}^c_\delta(y_0)$ we can expect $F^c(\varphi_c(y_0),y)$ to be close to the \emph{confocal trace} $\mathcal{F}_{y_0}(c):=F^c(\varphi_c(y_0),y_0)$.  The difficulty is that $F^c(\varphi_c(y_0),y)$ is integrated against an unknown random, rapidly oscillating -at the scale $\varepsilon r_0 \ll \frac{c^\star}{\omega_c}$ -  index $n_\varepsilon(\cdot)-n^\star$ on the dual focal spot.  
In the next section we show how to overcome this difficulty and build an estimator for the effective speed of sound in the random multi-scale medium.

\begin{proposition}{Imaging in the speckle.}\label{prop:imagingspeckle}
For $s_0 \in \R^{d-1}$, $t_0>0$, let $z(c) := (\eta^{\frac{1}{2}} c^2 s_0, c t_0):=\varphi_c(y_0)$, the expression of the imaging function at $z(c)$ reads
\begin{multline*}
 I^c\left(\varphi_c(y_0)\right)=\eta \left(\frac{c}{c^\star}\right)^2 \frac{\ell_0^4}{\left(16\pi^2|\varphi_c(y_0)|^2 \right)^2} \\  \int_{\Upsilon(\mathcal{D}^c_\delta(y_0))}  \left(n_\varepsilon(y_0+(\tilde{y})_\eta)-n^\star\right)    \int_{\mathcal{B}_0}\omega^2e^{i\frac{2\omega}{c^\star \vert y_0\vert }\left(\frac{c}{c^\star}\right)^2 \left\langle \tilde{y}^\perp,y_0^\perp\right\rangle}e^{i\frac{2\omega}{c^\star} \tilde{y}^\shortparallel} e^{i\frac{\omega}{c^\star|y_0|}\left(\frac{c}{c^\star}\right)^2|\tilde{y}^\perp|^2} \\  e^{i\frac{\omega}{c^\star} \left(\left(\frac{c}{c^\star}\right)^2-1\right) \frac{\vert y_0^\perp +\tilde{y}^\perp\vert^2}{\vert y_0\vert}}\mathcal{G}^2\left(\frac{\omega\ell_0}{c^\star  \left\vert y_0\right\vert} \left(\frac{c}{c^\star}\right)^2\tilde{y}^\perp , \frac{\omega\ell_0^2}{c^\star\left\vert y_0\right\vert} \left( \left(\frac{c^\star}{c}\right)^2-1\right)\right)\dd \omega  \dd \tilde{y} + \mathcal{O}(\eta^2).
 \end{multline*}
 where $(\tilde{y})_\eta:=y-y_0:=(\eta^{\frac{1}{2}}\tilde{y}^\perp, \eta \tilde{y}^\shortparallel)$and $\Upsilon(\mathcal{D}^c_\delta(y_0))$ is the rescaled (by $\eta^{\frac{1}{2}}$ in the transverse directions and $\eta$ in the axial direction) and translated (by $-y_0$) dual focal spot.\vsd

\end{proposition}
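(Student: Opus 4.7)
The plan is to combine the homogenized integral representation of $I^c$ with the paraxial form of the point spread function from Lemma~\ref{lem:PSFparaxial}, then perform a paraxial change of variables centered at $y_0$. Starting from $I^c(\varphi_c(y_0)) = \int_D(n_\varepsilon(y) - n^\star)F^c(\varphi_c(y_0), y)\, \mathrm{d}y$, which follows from \eqref{eq:homogenizedLS}, I would use the second form \eqref{eq:PSF2} of the PSF: since $\varphi_c^{-1}(\varphi_c(y_0)) = y_0$, the expression is already written in terms of $y_0$ and the variable point $y$, avoiding the need to invert $\varphi_c$.

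Next I would perform the change of variables $y = y_0 + (\tilde{y})_\eta$ with $(\tilde{y})_\eta = (\eta^{1/2}\tilde{y}^\perp, \eta\tilde{y}^\shortparallel)$. Its Jacobian in dimension three is $\eta^{(d-1)/2 + 1} = \eta^{2}$, which combined with the $\eta^{-1}$ prefactor of the PSF produces the overall factor $\eta$ of the statement. The domain of integration transforms to the rescaled dual focal spot $\Upsilon(\mathcal{D}^c_\delta(y_0))$; the contribution from its complement is controlled by the very definition of the dual focal spot and can be absorbed into the $\mathcal{O}(\eta^2)$ error by choosing $\delta$ small enough (of order $\eta$, given that $\int_D |F^c|$ is of order $\eta^{-1} \cdot \eta^2 = \eta$).

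The main technical step is the Taylor expansion in $\eta$ of the $y$-dependent pieces of the PSF around $y_0$. One has $|y| - |y_0| = \eta\tilde{y}^\shortparallel + \frac{\eta}{2|y_0|}\bigl(2\langle y_0^\perp, \tilde{y}^\perp\rangle + |\tilde{y}^\perp|^2\bigr) + \mathcal{O}(\eta^2)$, with analogous expansions for $1/|y|$ and $y^\perp/|y|$. Substituting these into the phase $\frac{2\omega}{\eta c^\star}(|y_0| - |y|)$ and into the arguments of $\mathcal{G}^2$ generates the three $\tilde{y}$-dependent exponentials appearing in the statement; the pre-existing phase factor $e^{i\frac{\omega}{c^\star}((c/c^\star)^2-1)|y_0^\perp|^2/|y_0|}$ then recombines with the linear and quadratic corrections in $\tilde{y}^\perp$ so as to reconstruct the term $|y_0^\perp + \tilde{y}^\perp|^2/|y_0|$ in the last exponential, after absorbing the remaining quadratic-in-$\tilde{y}^\perp$ contribution into the factor $e^{i\frac{\omega}{c^\star|y_0|}(c/c^\star)^2|\tilde{y}^\perp|^2}$.

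The main obstacle is the careful bookkeeping of orders in $\eta$: the phase carries a prefactor of order $\eta^{-1}$, so distance expansions must be pushed to order $\eta^2$ to control the final error at $\mathcal{O}(\eta^2)$, whereas the amplitude prefactors $1/|y|^2$ and the $\mathcal{G}^2$ arguments only require leading-order expansions. The total error $\mathcal{O}(\eta^2)$ then has three contributions which must each be checked: the $\mathcal{O}(1)$ error of the PSF asymptotic of Lemma~\ref{lem:PSFparaxial} becoming $\mathcal{O}(\eta^2)$ after multiplication by the Jacobian $\eta^2$; the subleading corrections from the Taylor expansions above; and the $\delta$-cutoff at the boundary of the dual focal spot.
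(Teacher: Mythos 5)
Your strategy is the same as the paper's: write $I^c(\varphi_c(y_0))$ as the integral of $(n_\varepsilon-n^\star)$ against the paraxial PSF of Lemma~\ref{lem:PSFparaxial}, change variables to $(\tilde y)_\eta=y-y_0$ with Jacobian $\eta^{(d+1)/2}=\eta^2$ in $d=3$, and Taylor-expand the phases and amplitudes; your accounting of the three error sources (the $\mathcal{O}(1)$ remainder of the PSF, the Taylor remainders against the $\eta^{-1}$ phase, and the $\delta$-cutoff) is if anything more explicit than the paper's. The one substantive difference is that you expand the form \eqref{eq:PSF2} (written in the physical variables $\varphi_c^{-1}(z)=y_0$ and $y$), whereas the paper expands \eqref{eq:PSF2bis} (written in the image variables $z=\varphi_c(y_0)$ and $\varphi_c(y)$); the stated grouping of the phases, with the factors $\left(\tfrac{c}{c^\star}\right)^2$ on the transverse terms and the separate factor $e^{i\frac{\omega}{c^\star}\left(\left(\frac{c}{c^\star}\right)^2-1\right)\frac{|y_0^\perp+\tilde y^\perp|^2}{|y_0|}}$, falls out directly from the latter (via $\varphi_c(\tilde y)^\perp=(c/c^\star)^2\tilde y^\perp$ and $|\varphi_c(y_0)|=\tfrac{c}{c^\star}|y_0|+\mathcal O(\eta)$), while from your route the expansion of $\frac{2\omega}{\eta c^\star}(|y_0|-|y|)$ produces these coefficients without the $(c/c^\star)^2$ factors and with a constant slow phase, so the "recombination" you invoke in your third paragraph is precisely the identity you would still need to check term by term to land on the displayed formula. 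This is an algebraic verification rather than a missing idea, but as written it is the one step of your argument that is asserted rather than carried out.
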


\begin{remark}\label{rem:sizedualfocal} By switching the roles of $c$ and $c^\star$ we can see that the focal spot in the imaging domain and the dual focal spot in the physical domain will have similar sizes. The scaling of the dummy variable of integration $(\tilde{y})_\eta$ in proposition \ref{prop:imagingspeckle}   confirms that conjecture and shows that the dual focal spot is indeed of size $\eta$ in  the axial direction and $\eta^{\frac{1}{2}}$ in the transverse direction.  Exactly as the focal spot,  the dual focal spot size and shape are sensitive to the mismatch between $c$ and $c^\star$,  but its axial and transverse sizes stay in the same order of magnitude in the regime considered.
\end{remark}

\begin{proof} 
We know from Lemma \ref{lem:PSFparaxial} that:
\begin{multline*}
 F^{c}(z,y) = \eta^{-1} \left(\frac{c}{c^\star}\right)^2 \frac{\ell_0^4}{\left(16\pi^2|z||\varphi_c(y)| \right)^2}    \int_{\mathcal{B}_0}\omega^2   e^{i\frac{2\omega}{\eta c}\left( |z| - |\varphi_c(y)| \right)} e^{i \frac{\omega}{c}\left( 1-\left(\frac{c^\star}{c}\right)^2\right) \frac{\vert \varphi_c(y)^\perp \vert^2}{\vert \varphi_c(y)\vert}}
\\ \mathcal{G}^2\left(\frac{\omega\ell_0}{c}\left( \frac{z^\perp}{|z|}- \frac{\varphi_c(y)^\perp}{|\varphi_c(y)|}\right) , \frac{\omega\ell_0^2}{c}\left( \frac{1}{|z|} - \left(\frac{c}{c^\star}\right)^2\frac{1}{|\varphi_c(y)|}\right)\right)+ \mathcal{O}(1).
 \end{multline*}
We perform the change of variable $(\tilde{y})_\eta:=y-y_0:=(\eta^{\frac{1}{2}}\tilde{y}^\perp, \eta \tilde{y}^\shortparallel)$.
We have
\begin{equation*}
|\varphi_c(y)|=|\varphi_c(y_0)|+\frac{\eta}{2}\frac{|\varphi_c(\tilde{y})^\perp|^2}{|\varphi_c(y_0)|}+\eta \varphi_c(y)^\shortparallel + \eta \frac{\left\langle \varphi_c(\tilde{y})^\perp,\varphi_c(y_0)^\perp\right\rangle}{\vert \varphi_c(y_0)\vert } +\mathcal{O}(\eta^{\frac{3}{2}}).
\end{equation*}
We get
\begin{equation*}
|z| -|\varphi_c(y)|=\frac{\eta}{2}\frac{|\varphi_c(\tilde{y})^\perp|^2}{|\varphi_c(y_0)|}+\eta \frac{c}{c^\star}\tilde{y}^\shortparallel+ \eta \frac{\left\langle \varphi_c(\tilde{y})^\perp,\varphi_c(y_0)^\perp\right\rangle}{\vert \varphi_c(y_0)\vert } +\mathcal{O}(\eta^{\frac{3}{2}}),
\end{equation*}

\begin{align*}
 \frac{z^\perp}{|z|}- \frac{\varphi_c(y)^\perp}{|\varphi_c(y)|}=&  \frac{\varphi_c(\tilde{y})^\perp}{|\varphi_c(y_0)|}+\mathcal{O}(\eta),
\end{align*} and
\begin{equation*}
\left(\frac{c}{c^\star}\right)^2 \frac{1}{|z|} - \frac{1}{|\varphi_c(y)|} =\frac{1}{|\varphi_c(y_0)|}\left(\left( \frac{c}{c^\star}\right)^2-1\right) +\mathcal{O}(\eta).
\end{equation*}
By plugging those asymptotic expansions back in \eqref{eq:Icpsic} we get the result. 
\end{proof}

\subsection{Effective speed of sound estimation via ensemble averaging}
In thise section we show that it is possible to access the  spatial average of the point-spread function over the dual focal spot via the variance of the imaging function (lemma \ref{lem:incoh}).  Using a similar idea as what was developped in section \ref{subsub:focusing},  we then build an estimator for the effective sound velocity in the medium (proposition \ref{prop:esgtimatorvariance}). 
\begin{lemma}[Second order moment]\label{lem:incoh}
Assume $n:\R^d \times \Omega \rightarrow [n_{-},n_+]$ is a  stationary and mixing process satisfying \eqref{eq:Decaying}.  Then as $\eta\rightarrow 0$ we have:
\begin{align*}
\mathbb{E}\left[\left\vert\int_{\mathcal{D}^c_\delta(\varphi_c^{-1}(z))}  \left(n_\varepsilon(y)-n^\star\right) F^c(z,y) \dd y\right\vert^2 \right]= \varepsilon^d  \int_{\R^d} C(s)\dd s  \int_{\mathcal{D}^c_\delta(\varphi_c^{-1}(z))}   \left\vert F^c(z,y) \right\vert^2\dd y + o\left(\eta^{\frac{d-1}{2}}\right),
\end{align*} where $C$ is the covariance of $n$. 
\end{lemma}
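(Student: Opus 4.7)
The plan is to expand the modulus squared into a two-point integral, invoke stationarity of $n$ to rewrite its correlation in terms of the covariance $C$, perform the change of variables $s=(y-y')/\varepsilon$ to extract the $\varepsilon^d$ prefactor, and finally exploit the scale separation $\varepsilon\ll\eta$ to freeze $F^c(z,\cdot)$ at $y$ in the resulting integral.

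First, by Fubini and linearity,
\begin{equation*}
\E\!\left[\Big|\!\int_{\mathcal{D}^c_\delta(\varphi_c^{-1}(z))}(n_\varepsilon(y)-n^\star)F^c(z,y)\,\dd y\Big|^2\right]
= \iint R_\varepsilon(y,y')\,F^c(z,y)\overline{F^c(z,y')}\,\dd y\,\dd y',
\end{equation*}
with $R_\varepsilon(y,y'):=\E[(n_\varepsilon(y)-n^\star)(n_\varepsilon(y')-n^\star)]$ and both integrals over $\mathcal{D}^c_\delta(\varphi_c^{-1}(z))$. Under the simplification $n^\star=n_0$, $\E[n_S]=0$, the stationary process $n_\varepsilon=n(\cdot/\varepsilon)$ has constant mean; any residual bias contributes $O\bigl(|\int F^c(z,y)\,\dd y|^2\bigr) = O(\eta^{d-1})$, which is absorbed in the announced error, so that up to this term
\begin{equation*}
R_\varepsilon(y,y') = \Cov\bigl(n(y/\varepsilon),n(y'/\varepsilon)\bigr) = C\!\left(\frac{y-y'}{\varepsilon}\right).
\end{equation*}

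Next, I substitute $s=(y-y')/\varepsilon$ at fixed $y$ (Jacobian $\varepsilon^d$) to obtain
\begin{equation*}
\text{LHS} = \varepsilon^d \int_{\mathcal{D}^c_\delta(\varphi_c^{-1}(z))} F^c(z,y)\int_{A_\varepsilon(y)}C(s)\,\overline{F^c(z,y-\varepsilon s)}\,\dd s\,\dd y,
\end{equation*}
where $A_\varepsilon(y):=\{s\in\R^d :\,y-\varepsilon s\in\mathcal{D}^c_\delta(\varphi_c^{-1}(z))\}$. The explicit formulas of Lemma \ref{lem:PSFparaxial} and Corollary \ref{cor:DLF} yield the sharp bounds $\|F^c(z,\cdot)\|_{L^\infty}=O(\eta^{-1})$ and $\|\nabla_y F^c(z,\cdot)\|_{L^\infty}=O(\eta^{-2})$ on the dual focal spot. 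Since $\varepsilon=\eta^\alpha\ll\eta$ by Remark \ref{rem:alpha}, a first-order Taylor expansion yields $F^c(z,y-\varepsilon s)=F^c(z,y)+O(\varepsilon|s|\eta^{-2})$; substituting the zero-order term produces the announced main contribution $\varepsilon^d \bigl(\int_{\R^d} C(s)\,\dd s\bigr) \int_{\mathcal{D}^c_\delta} |F^c(z,y)|^2\,\dd y$.

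Finally, I bound the two residual terms: (i) completing the $s$-integration to all of $\R^d$ costs a tail $\int_{|s|\gtrsim \eta^{1/2}/\varepsilon}|C(s)|\,\dd s$, controlled by the polynomial moment bound \eqref{eq:Decaying}; (ii) the Taylor remainder is at most $\varepsilon^{d+1}\|\nabla F^c\|_{L^\infty} \|F^c\|_{L^\infty}\,\mathrm{vol}(\mathcal{D}^c_\delta)\int |s|\,|C(s)|\,\dd s = O(\varepsilon^{d+1}\eta^{(d-5)/2})$, which is $o(\eta^{(d-1)/2})$ under the scaling $\alpha>\tfrac{2}{d+1}$ implied by Remark \ref{rem:alpha}. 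The main obstacle is this bookkeeping: one must reconcile the three scales $\varepsilon\ll\eta\ll 1$, the $O(\eta^{-1})$ concentration of $F^c$ on the $O(\eta^{(d+1)/2})$-thin dual focal spot, and the merely polynomial decay of $C$, so that every remainder --- including the boundary-layer contribution from $A_\varepsilon(y)\neq\R^d$ near $\partial\mathcal{D}^c_\delta$ --- collapses below the stated $o(\eta^{(d-1)/2})$ threshold.
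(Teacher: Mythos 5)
Your proof follows essentially the same route as the paper's: expand the square via Fubini, replace the two-point moment by $C\bigl((y-y')/\varepsilon\bigr)$, rescale by $\varepsilon$ to extract the $\varepsilon^d$ prefactor, and Taylor-expand $F^c$ at $y$ using the scale separation $\varepsilon\ll\eta$. Your error bookkeeping (explicit $L^\infty$ and gradient bounds on $F^c$, the tail of $C$, the boundary layer where $A_\varepsilon(y)\neq\R^d$) is in fact more detailed than the paper's, which simply truncates $C$ to a ball of radius $R$ with $\varepsilon R=o(\eta)$ before Taylor-expanding.
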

\begin{proof}
By Fubini's theorem:
\begin{align*}
A(z) & :=\mathbb{E}\left[\left\vert\int_{\mathcal{D}^c_\delta(\varphi_c^{-1}(z))}  \left(n_\varepsilon(y)-n^\star\right) F^c(z,y) \dd y\right\vert^2 \right] \\
\vsd & \dst=\iint_{\mathcal{D}^c_\delta(\varphi_c^{-1}(z))^2}\E[\left(n_\eps(y)-n^\star\right)\left(n_\eps(\tilde{y})-n^\star\right)]F^c(z,y)\overline{F^c}(z,\tilde{y})\dd y \dd \tilde{y} \\&\dst=\iint_{\mathcal{D}^c_\delta(\varphi_c^{-1}(z))^2}C\left(\frac{y-\tilde{y}}{\eps}\right)F^c(z,y)\overline{F^c}(z,\tilde{y}))\dd y \dd \tilde{y}
\end{align*}
We change variables to get
\begin{equation*}
A(z) =  \eps^{ d}\iint_{\mathcal{D}^c_\delta(\varphi_c^{-1}(z))\times \mathcal{D}_x} C(x)F^c(z,\tilde{y}+\eps x)\overline{F^c}(z,\tilde{y})\dd \tilde{y} \dd x,
\end{equation*}
where $\mathcal{D}_x:=\dst\left\{\frac{y-\tilde{y}}{\eps}, y,\tilde{y}\in \mathcal{D}^c_\delta(\varphi_c^{-1}(z))\right\}$. 
We want to do a Taylor expansion in \mbox{ $F^c(z, \tilde{y}+\varepsilon x)$} with respect  to $\varepsilon x$ but $\vert x\vert $ can be arbitrarily large at first glance.  However, we can use the separation of scale in our model.  Recall that the micro-structure varies at a scale of order $\varepsilon \ll \eta $ and the point-spread function varies at the scale of the wavelength $c\omega_{\eta}^{-1} =\eta^{-1} \frac{c}{\omega}$.  More precisely, 
since $C$  is controlled by a decreasing function $\Phi$  there exists $R>0$ such that
\begin{equation*}\left|\int_{\R^d}C-\int_{B(0,R) } C\right|\leq \varepsilon.\end{equation*}
Moreover, the decay rate of $\Phi$ gives us $\varepsilon R=o(\eta)$, see lemma \ref{lem:appendix}.
After a Taylor's expansion of $F^c$ at $\tilde{y}$ we obtain,  at first order in $\eta$:
\begin{equation*}
A(z)= \eps^{d}\int_{\R^d} C(s)\dd s \int_{\mathcal{D}^c_\delta(\varphi_c^{-1}(z))}|F^c(z,\tilde{y})|^2\dd \tilde{y} +o\left(\eta^{\frac{d-1}{2}}\right).
\end{equation*}

\end{proof}

We are now ready to state our main proposition:
\begin{proposition}[Effective speed of sound estimator] \label{prop:esgtimatorvariance}
For any $s_0\in \R^{d-1}$,  $t_0\in \R^+$:
\begin{align*}
c^\star = \underset{c}{\mathrm{argmax\, }} \mathbb{E}\left[\left\vert I^c(\psi_c(s_0,t_0))\right\vert^2\right].
\end{align*}
\end{proposition}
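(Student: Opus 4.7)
My plan is to reduce the variance of $I^c$ to a deterministic integral of $|F^c|^2$ via Lemma~\ref{lem:incoh}, to evaluate that integral by a Plancherel computation using the asymptotic expression from Proposition~\ref{prop:imagingspeckle}, and finally to show that the remaining function of $c$ is strictly peaked at $c=c^\star$.

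First I would set $y_0:=(\eta^{1/2}(c^\star)^2 s_0,\, c^\star t_0)$, so that $\psi_c(s_0,t_0)=\varphi_c(y_0)$ for every $c$ while $y_0$ itself is \emph{independent} of $c$. Applying Lemma~\ref{lem:incoh} at $z=\varphi_c(y_0)$ (so that $\varphi_c^{-1}(z)=y_0$) reduces the claim to showing that the deterministic map
$$c\longmapsto J(c):=\int_{\mathcal{D}^c_\delta(y_0)}\bigl|F^c(\varphi_c(y_0),y)\bigr|^2\,\mathrm{d}y$$
attains its maximum at $c=c^\star$.

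Next I plug into $J(c)$ the asymptotic expansion from Proposition~\ref{prop:imagingspeckle} for $F^c(\varphi_c(y_0),y_0+(\tilde y)_\eta)$; up to a smooth prefactor $A(c)$ and a lower-order remainder the integrand reads $|\Psi(c,\tilde y,y_0)|^2$ with $\Psi=\int_{\mathcal{B}_0}\omega^2 e^{i\Phi}\mathcal{G}^2(\xi_1,\xi_2)\,\mathrm{d}\omega$. Only the phase $e^{2i\omega\tilde y^\shortparallel/c^\star}$ carries $\tilde y^\shortparallel$-dependence, so Plancherel in $\tilde y^\shortparallel$ collapses $\Psi\overline\Psi$ to the diagonal $\omega'=\omega$, producing a factor $\pi c^\star\int_{\mathcal{B}_0}\omega^4|\mathcal{G}(\xi_1,\xi_2)|^4\,\mathrm{d}\omega$. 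Changing variable $u=\xi_1$ in the $\tilde y^\perp$ integration then yields
$$J(c)=A(c)\int_{\mathcal{B}_0}\omega^{5-d}\,\mathcal{H}\bigl(\xi_2(c,\omega)\bigr)\,\mathrm{d}\omega,\qquad \mathcal{H}(\xi_2):=\int_{\R^{d-1}}|\mathcal{G}(u,\xi_2)|^4\,\mathrm{d}u,$$
where $\xi_2(c,\omega)=\frac{\omega\ell_0^2}{c^\star|y_0|}\bigl((c^\star/c)^2-1\bigr)$ vanishes for every $\omega\in\mathcal{B}_0$ if and only if $c=c^\star$. The crux is then to show $\mathcal{H}(\xi_2)\leq\mathcal{H}(0)$ with equality iff $\xi_2=0$: since $\mathcal{G}(\cdot,\xi_2)$ is the Fourier transform of $\mathbbm{1}_{[-1,1]^{d-1}}(x)e^{i|x|^2\xi_2/2}$, Plancherel applied to $|\mathcal{G}|^4=|\mathcal{G}^2|^2$ rewrites $\mathcal{H}(\xi_2)$ as the $L^2$-norm squared of an autocorrelation $p(u,\xi_2)=e^{i|u|^2\xi_2/2}\int_{[-1,1]^{d-1}\cap([-1,1]^{d-1}-u)}e^{i\xi_2\,x\cdot u}\,\mathrm{d}x$ whose modulus is pointwise bounded by $p(u,0)$ via the triangle inequality, with equality only when $\xi_2=0$.

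The main obstacle is the smooth prefactor $A(c)$ (arising from $|\varphi_c(y_0)|^{-4}$ and from the Jacobian of the $\xi_1$-change of variable), which carries its own $c$-dependence and is not stationary at $c^\star$. As in the discussion following Proposition~\ref{prop:amplitude}, the peak of $c\mapsto\mathcal{H}(\xi_2(c,\omega))$ has width of order $c^\star y_0^\shortparallel/(\omega_c\ell^2)$, which is many orders of magnitude smaller than the scale on which $A(c)$ varies in the paraxial regime; thus the sharp peak of $\mathcal{H}$ dominates the slow variation of $A$ and the overall argmax is at $c=c^\star$. In the broadband regime (Equation~\eqref{eq:Fcbroadband}) this is most transparent: the $\omega$-integral reduces to a product of a sinc and $\mathcal{G}^2(0,\cdot)$, both sharply peaked at $c^\star$, and the conclusion follows immediately.
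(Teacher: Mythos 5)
Your proposal is correct and its opening reduction is exactly the paper's: set $y_0=(\eta^{1/2}(c^\star)^2 s_0, c^\star t_0)$ so that $\varphi_c^{-1}(\psi_c(s_0,t_0))=y_0$ is independent of $c$, then invoke Lemma \ref{lem:incoh} to replace $\mathbb{E}[|I^c|^2]$ by $\varepsilon^d\int C\cdot\int_{\mathcal{D}^c_\delta(y_0)}|F^c(\varphi_c(y_0),y)|^2\dd y$. Where you genuinely diverge is in how the resulting deterministic integral $J(c)$ is shown to peak at $c=c^\star$. The paper disposes of this in two lines: a Taylor expansion approximating $J(c)$ by $|\mathcal{D}^c_\delta(y_0)|\,|\mathcal{F}_{y_0}(c)|^2$, followed by the qualitative remark (via remark \ref{rem:sizedualfocal}) that the dual focal spot's volume does not vary much with $c$, so that $J$ inherits the peak of the confocal trace established in Proposition \ref{prop:amplitude}. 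You instead compute $J(c)$ explicitly from Proposition \ref{prop:imagingspeckle}: Plancherel in the axial variable collapses the double frequency integral to its diagonal, the transverse change of variables produces $\int_{\mathcal{B}_0}\omega^{5-d}\mathcal{H}(\xi_2(c,\omega))\dd\omega$ with $\mathcal{H}(\xi_2)=\int|\mathcal{G}(u,\xi_2)|^4\dd u$, and you then prove the genuine extremality statement $\mathcal{H}(\xi_2)\le\mathcal{H}(0)$ with equality iff $\xi_2=0$ by a second Plancherel identity and a pointwise triangle inequality on the (auto)correlation of $\mathbbm{1}_{[-1,1]^{d-1}}e^{i|x|^2\xi_2/2}$. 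This buys a rigorous proof of the peakedness that the paper only asserts from the shape of $\mathcal{G}$, at the cost of more computation; note only that your correlation kernel $p(u,\xi_2)$ is written as a cross-correlation while $\mathcal{G}^2$ is the transform of a convolution (the quadratic phases combine to $2|x-u/2|^2\xi_2/2+|u|^2\xi_2/4$ rather than $x\cdot u\,\xi_2$), but the bound $|p(u,\xi_2)|\le p(u,0)$ and the equality case survive unchanged. Both your argument and the paper's leave the same loose end: the smooth prefactor $A(c)$ (and, in the paper's version, the $c$-dependence of $|\mathcal{D}^c_\delta(y_0)|$) shifts the argmax by an amount that is only argued to be negligible by scale separation, so the proposition should really be read as an asymptotic statement; you at least flag this explicitly.
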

\begin{proof}
The typical sizes of $\mathcal{D}_\delta^c(\varphi_c^{-1}(z))$ are $ \eta^{\frac{1}{2}} \times \eta^{-1}$ in dimension $2$ and $ \eta^{\frac{1}{2}} \times \eta^{\frac{1}{2}}\times \eta^{-1}$  in dimension $3$. Therefore  $\int_{\mathcal{D}_\delta^c(\varphi_c^{-1}(z))}  \left\vert F^c(z,y)\right\vert^2 \dd y $ can be reasonably approximated via a Taylor expansion by $  \vert D_\delta^c(\varphi_c^{-1}(z))\vert \left\vert F^c(z,\varphi_c^{-1}(z))\right\vert^2$ and:
\begin{align*}
\mathbb{E}\left[\vert I^c(z,\cdot)\vert^2\right]= \varepsilon^d  \int_{\R^d} C(s)\dd s\,    \vert\mathcal{D}_\delta^c(\varphi_c^{-1}(z))\vert \left\vert F^c(z,\varphi_c^{-1}(z))\right\vert^2 +o\left(\eta^{\frac{d-1}{2}}\right).
\end{align*}
Using $z=\psi_c(s_0,t_0)$ gives us $\varphi_c^{-1}(z)= y_0$ with
 \begin{align*}
y_0 := \left(\eta^{\frac{1}{2}}\left(c^\star\right)^2 s_0, c^\star t_0\right) ,
\end{align*}
which does not depend on $c$ and therefore
\begin{align*}\mathbb{E}\left[\vert I^c(z,\cdot)\vert^2\right]= \varepsilon^d  \int_{\R^d} C(s)\dd s\,  \vert \mathcal{D}_\delta^c(y_0)\vert \left\vert \mathcal{F}_{y_0}(c)\right\vert^2 +o\left(\eta^{\frac{d-1}{2}}\right).
\end{align*}
In light of remark \ref{rem:sizedualfocal} that $\mathbb{E}\left[\vert I^c(z,\cdot)\vert^2\right]$ therefore behaves qualitatively as $\vert \mathcal{F}_{y_0}(c)\vert^2$.
\end{proof}

\subsection{From ensemble averaging to spatial averaging}\label{sec:ensembletospatial}

In the previous subsection we constructed an estimator for the effective speed of sound in the micro-structured random medium. The computation of the estimator requires access to multiple realizations of the imaging functional to approximate its expected value.  However, in practice,  one has access to only  a single realization of the medium.  In this subsection,  we show that we can approximate the expected value with data collected from only one realization of the medium. 

In what follows we  precise the dependency with respect to the realization in the notations.  For a given realization $\varpi \in \Omega$ we write $n(x,\varpi),  x\in D$ and $I^c(z,\varpi), z\in D'$. 
We start by proving a technical lemma.  
In the case where $c=c^\star$ the point spread function $F^{c^\star} (z,y)$ is actually a function of $z-y$.  So it is obvious that for any $\Delta z\in \R^d$ we have $F^{c^\star}(z+\Delta z ,y)= F^{c^\star}(z ,y-\Delta z)$.  In the case $c\neq c^\star$ we prove here a modified version of this result for a \emph{small } (of order $\eta$) $\Delta z$. 
\begin{lemma}\label{lem:symF} For $y_0\in D$ and $y\in\mathcal{D}_\delta^c(y_0+ \varphi_c^{-1}((\Delta z)_\eta))$ with \mbox{$(\Delta z)_\eta := (\eta \Delta z^\perp, \eta \Delta z^\shortparallel)  \in\R^{d}$} \begin{align*}
 F^{c}\left(\varphi_c(y_0)+ (\Delta z)_\eta ,y\right)  = F^c\left(\varphi_c(y_0), y-\varphi_c^{-1}((\Delta z)_\eta)\right) +\mathcal{O}\left(\eta^{-\frac{1}{2}}\right).
 \end{align*}
  \end{lemma}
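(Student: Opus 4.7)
The plan is to apply the second form of Lemma \ref{lem:PSFparaxial} to both sides of the desired identity. Setting $\delta := \varphi_c^{-1}((\Delta z)_\eta)$, the linearity of the diagonal map $\varphi_c$ (see \eqref{eq:defphi}) gives $\varphi_c^{-1}(\varphi_c(y_0)+(\Delta z)_\eta) = y_0+\delta$, and since $(\Delta z)_\eta = (\eta\Delta z^\perp,\eta\Delta z^\shortparallel)$, all components of $\delta$ are $O(\eta)$. After substitution, the left-hand side becomes an integral whose integrand depends on the pair $(y,y_0+\delta)$, while the right-hand side depends on $(y-\delta,y_0)$; the two pairs are related by a joint shift by $-\delta$, so an exact identity would hold if the integrand were translation-invariant in this sense, and the $O(\eta^{-1/2})$ error will quantify the non-translation-invariant contributions.

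I would then compare the two expressions term by term. The amplitude prefactor $\tfrac{1}{|y|^{2}|\varphi_c^{-1}(z)|^{2}}$, the slow phase $e^{i\frac{\omega}{c^\star}((c/c^\star)^{2}-1)|\varphi_c^{-1}(z)^\perp|^{2}/|\varphi_c^{-1}(z)|}$, and the arguments of $\mathcal{G}^{2}$ all depend smoothly on $(\varphi_c^{-1}(z),y)$; perturbing either variable by $\delta=O(\eta)$ produces an $O(\eta)$ change in each of these factors and, after multiplication by the $\eta^{-1}$ amplitude prefactor, an $O(1)$ absolute error in $F^{c}$ — well within the claimed $\mathcal{O}(\eta^{-1/2})$ budget.

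The sensitive contribution is the fast phase $e^{i\frac{2\omega}{\eta c^\star}(|\varphi_c^{-1}(z)|-|y|)}$, where the factor $\eta^{-1}$ amplifies small discrepancies. A first-order Taylor expansion of the norms yields
\begin{equation*}
\bigl(|y_0+\delta|-|y|\bigr)-\bigl(|y_0|-|y-\delta|\bigr)=\Bigl\langle \delta,\,\tfrac{y_0}{|y_0|}-\tfrac{y}{|y|}\Bigr\rangle+\mathcal{O}(\eta^{2}),
\end{equation*}
and the key step is to show this inner product is $O(\eta^{3/2})$. I would exploit the paraxial structure: by Remark \ref{rem:sizedualfocal} the condition $y\in\mathcal{D}_\delta^{c}(y_0+\delta)$ forces $y-(y_0+\delta)$ to have Euclidean transverse size $O(\eta^{1/2})$ and axial size $O(\eta)$, so together with $\delta=O(\eta)$ the same sizes hold for $y-y_0$. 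Combined with the fact that $y_0$ itself has Euclidean transverse component $O(\eta^{1/2})$, a direct expansion then shows that $y/|y|-y_0/|y_0|$ has transverse component $O(\eta^{1/2})$ and axial component $O(\eta)$. Pairing against $\delta$ (both components $O(\eta)$) yields the desired $O(\eta^{3/2})$ bound, hence a phase discrepancy of $O(\eta^{1/2})$ after division by $\eta$; the elementary estimate $|e^{i\theta_{1}}-e^{i\theta_{2}}|\leq|\theta_{1}-\theta_{2}|$, together with the $\eta^{-1}$ amplitude prefactor and integration over the bounded bandwidth $\mathcal{B}_0$, gives the claimed $\mathcal{O}(\eta^{-1/2})$ bound. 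The main obstacle is precisely this paraxial bookkeeping: the naive bound $y/|y|-y_0/|y_0|=O(1)$ would produce an $O(\eta)$ inner product, an $O(1)$ phase error and an unacceptable $O(\eta^{-1})$ total error, so the gain from the $O(\eta^{1/2})$ transverse size — itself a reflection of the axial elongation of the dual focal spot — is what makes the lemma hold.
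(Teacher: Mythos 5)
Your proposal is correct and follows essentially the same route as the paper: both apply the second (paraxial) form of Lemma \ref{lem:PSFparaxial}, use $\varphi_c^{-1}(\varphi_c(y_0)+(\Delta z)_\eta)=y_0+\varphi_c^{-1}((\Delta z)_\eta)$, dispose of the slowly varying amplitude and $\mathcal{G}$-arguments by smoothness, and isolate the fast phase, where a first-order expansion of the norms combined with the paraxial scalings (transverse size $O(\eta^{1/2})$, axial size $O(\eta)$) yields the $O(\eta^{3/2})$ cancellation and hence the $\mathcal{O}(\eta^{-1/2})$ error. Your inner-product formulation $\langle\delta,\,y_0/|y_0|-y/|y|\rangle$ is just a repackaging of the paper's separate expansions of $|\varphi_c^{-1}(z)|-|y|$ and $|y-\varphi_c^{-1}((\Delta z)_\eta)|$, and it correctly identifies why the naive $O(1)$ bound on the unit vectors would fail.
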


\begin{proof}
Let us prove the result in dimension $d=3$. 
We know from Lemma  \ref{lem:PSFparaxial}
 \begin{multline*}
 F^{c}(z,y) = \eta^{-1} \left(\frac{c^\star}{c}\right)^2 \frac{\ell_0^4}{\left(16\pi^2|y||\varphi_c^{-1}(z)| \right)^2}    \int_{\mathcal{B}_0}\omega^2   e^{i\frac{2\omega}{\eta c^\star}\left( |\varphi_c^{-1}(z)| - |y| \right)}  e^{i\frac{\omega}{c^\star}\left(\left(\frac{c}{c^\star}\right)^2-1\right) \frac{\vert \varphi_c^{-1}(z)^\perp\vert^2}{\vert \varphi^{-1}_c(z)\vert}}
\\ \mathcal{G}^2\left(\frac{\omega\ell_0}{c^\star}\left( \frac{\varphi_c^{-1}(z)^\perp}{|\varphi_c^{-1}(z)|}- \frac{y^\perp}{|y|}\right) , \frac{\omega\ell_0^2}{c^\star}\left( \left(\frac{c^\star}{c}\right)^2 \frac{1}{|\varphi_c^{-1}(z)|} - \frac{1}{|y|}\right)\right) \dd \omega+ \mathcal{O}(1).
 \end{multline*}
We have, for $y\in \mathcal{D}_\delta\left(y_0\right)$, in light of remark \ref{rem:sizedualfocal} $ (y-y_0)^\shortparallel= \mathcal{O}(\eta)$ and $\vert (y-y_0)^\perp\vert  = \mathcal{O}(1)$. This stays true for $y\in \mathcal{D}_\delta\left(y_0-\varphi_c^{-1}((\Delta z)_\eta)\right)$. First,  we write,  for $z=\varphi_c(y_0)+(\Delta z)_\eta$:
\begin{equation*}
|\varphi_c^{-1}(z)| -|y|=|y_0|-|y|+\eta\frac{c^\star}{c}\Delta z^\shortparallel+\mathcal{O}(\eta^{\frac{3}{2}}).
\end{equation*}
Moreover
\begin{align*}
|y-\varphi^{-1}_c(\Delta z)|=|y|-\eta\frac{c^\star}{c}\Delta z^\shortparallel+\mathcal{O}(\eta^{\frac{3}{2}}).
\end{align*}
We can then write:
\begin{equation}\label{eq:preuve1}
|\varphi_c^{-1}(z)| -|y|=|y_0|-|y-\varphi^{-1}_c(\Delta z)|+\mathcal{O}(\eta^{\frac{3}{2}}).
\end{equation}
Similarly:
\begin{align*}
\varphi_c^{-1}(z)^\perp = y_0^\perp + \mathcal{O}(\eta^{\frac{1}{2}}),
\end{align*}
and
\begin{align}\label{eq:preuve2}\frac{\vert \varphi_c^{-1}(z)^\perp\vert^2}{\vert \varphi_c^{-1}(z)\vert } =\frac{\vert y_0^\perp\vert^2}{\vert y_0\vert} +\mathcal{O}(\eta^{\frac{1}{2}}).
\end{align}
Secondly,
\begin{equation}\label{eq:preuve3}
\begin{aligned}
 \frac{\varphi_c^{-1}(z)^\perp}{|\varphi_c^{-1}(z)|}- \frac{y^\perp}{|y|}=&  \frac{y_0^\perp + \varphi_c^{-1}(\Delta z)^\perp}{|y_0|}-\frac{y^\perp}{|y_0+ (y-y_0)|} +\mathcal{O}(\eta)
 \\=& \frac{y_0^\perp}{|y_0|}-\frac{y^\perp- \varphi_c^{-1}(\Delta z)^\perp }{|y_0|}+\mathcal{O}(\eta)\\=&\frac{y_0^\perp}{|y_0|}-\frac{y^\perp- \varphi_c^{-1}(\Delta z)^\perp }{|y-\varphi_c^{-1}(\Delta z)|}+\mathcal{O}(\eta),
 \end{aligned}
\end{equation} and finally:
\begin{equation}\label{eq:preuve4}
\begin{aligned}
\left(\frac{c^\star}{c}\right)^2 \frac{1}{|\varphi_c^{-1}(z)|} - \frac{1}{|y|} &=\left( \frac{c^\star}{c}\right)^2\frac{1}{|y_0|}-\frac{1}{|y|} +\mathcal{O}(\eta)\\&=\left( \frac{c^\star}{c}\right)^2\frac{1}{|y_0|}-\frac{1}{|y-\varphi_c^{-1}(\Delta z)|} +\mathcal{O}(\eta).
 \end{aligned}
\end{equation}
Plugging back \eqref{eq:preuve1}, \eqref{eq:preuve2},  \eqref{eq:preuve3},  \eqref{eq:preuve4} in the expression of $F$ we recognize $F^c(\varphi_c(y_0),y-\varphi_c^{-1}((\Delta z)_\eta)$.
\end{proof}
 
 Relying on the previous result, we are now ready to state one of the key result of the paper. We prove that, asymptotically,  the imaging function at a point close to $z(c)$ can be written as another realization of the imaging function at the point $z(c)$.
\begin{proposition}{Local stationarity.} \label{prop:localstationarity}
 For $s_0 \in \R^{d-1}$, $t_0>0$, let $z(c) := (\eta^{\frac{1}{2}} c^2 s_0, c t_0)$.  
Then for $a.e.\, \varpi\in\Omega$ and $(\Delta z)_\eta := (\eta \Delta z^\perp,\eta \Delta z^\shortparallel)  \in\R^{d}$,
\begin{align*}
I^c\left(z(c)+(\Delta z)_\eta,\varpi \right) = I^c\left(z(c),\tau_{\varphi_c^{-1}((\Delta z)_\eta)}\varpi\right)+\mathcal{O}(\eta^{1+\frac{d-2}{2}}).
\end{align*}
\end{proposition}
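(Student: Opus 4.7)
The plan is to combine the integral representation of $I^c$, the shift relation of Lemma \ref{lem:symF}, a change of variables, and the stationarity of $n$. Set $y_0 := \varphi_c^{-1}(z(c)) = (\eta^{1/2}(c^\star)^2 s_0, c^\star t_0)$ and $s := \varphi_c^{-1}((\Delta z)_\eta)$, so that $\varphi_c^{-1}(z(c)+(\Delta z)_\eta) = y_0 + s$, with $s^\shortparallel = O(\eta)$ and $|s^\perp| = O(\eta^{1/2})$.

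\textbf{Step 1 (localize and shift the kernel).} I start from
\begin{align*}
I^c(z(c)+(\Delta z)_\eta,\varpi) = \int_D (n_\eps(y,\varpi)-n^\star)\, F^c(z(c)+(\Delta z)_\eta, y)\, \dd y
\end{align*}
and restrict the integration to the shifted dual focal spot $\mathcal{D}^c_\delta(y_0+s)$, paying a tail of order $\delta\,\|F^c\|_{L^1}$. On this set, Lemma \ref{lem:symF} gives $F^c(z(c)+(\Delta z)_\eta, y) = F^c(z(c), y-s) + O(\eta^{-1/2})$. Since $|\mathcal{D}^c_\delta| = O(\eta^{(d+1)/2})$ by remark \ref{rem:sizedualfocal} and $n_\eps - n^\star \in L^\infty$, the pointwise $O(\eta^{-1/2})$ kernel error contributes at most $\eta^{-1/2}\cdot\eta^{(d+1)/2} = \eta^{d/2} = \eta^{1+(d-2)/2}$ after integration, exactly the claimed precision.

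\textbf{Step 2 (change of variables and stationarity).} The substitution $\tilde y := y-s$ sends $\mathcal{D}^c_\delta(y_0+s)$ onto $\mathcal{D}^c_\delta(y_0)$ and turns the main term into
\begin{align*}
\int_{\mathcal{D}^c_\delta(y_0)} \bigl(n_\eps(\tilde y+s,\varpi)-n^\star\bigr)\, F^c(z(c),\tilde y)\, \dd \tilde y.
\end{align*}
Stationarity of $n$ provides a measure-preserving action $\{\tau_u\}_{u\in\R^d}$ on $\Omega$ with $n_\eps(\tilde y+s,\varpi) = n_\eps(\tilde y,\tau_s \varpi)$ for a.e. $\varpi$. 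Substituting, and re-extending the integration back to $D$ (at the cost of a symmetric tail of order $\delta\|F^c\|_{L^1}$, evaluated now in the translated realization $\tau_s\varpi$), I recognize exactly $I^c(z(c),\tau_s \varpi)$.

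\textbf{Main obstacle.} The delicate point is the interplay between the threshold $\delta$ in the definition of $\mathcal{D}^c_\delta$ and the target error $O(\eta^{1+(d-2)/2})$. Using the peak amplitude $\eta^{-1}$ of $F^c$ and its focal-spot volume $\eta^{(d+1)/2}$, one has $\|F^c\|_{L^1} = O(\eta^{(d-1)/2})$, so the two tails are each of order $\delta\,\eta^{(d-1)/2}$. Absorbing them into $O(\eta^{d/2})$ forces $\delta \lesssim \eta^{1/2}$, which is admissible because the explicit expression \eqref{eq:PSF2} and the decay of $\mathcal{G}$ outside its constructive-interference region provide a quantitative falloff of $F^c$ beyond its focal spot, ensuring that $\mathcal{D}^c_\delta(y_0)$ scales continuously down to $\delta \sim \eta^{1/2}$. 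The remaining bookkeeping (measurability of $\tau_s\varpi$ jointly in $(s,\varpi)$, and the fact that the ``a.e. $\varpi$'' set can be chosen independently of $\Delta z$ for any fixed $\Delta z$) is automatic from the standard construction of the translation action for stationary fields.
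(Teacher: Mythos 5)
Your proof follows the same route as the paper's: restrict to the dual focal spot, apply Lemma \ref{lem:symF} to shift the kernel, change variables $\tilde y := y-\varphi_c^{-1}((\Delta z)_\eta)$, and invoke stationarity of $n$ to absorb the shift into the action $\tau$. In fact you supply two pieces of bookkeeping the paper leaves implicit — the origin of the $\mathcal{O}(\eta^{1+\frac{d-2}{2}})$ error as (kernel error $\eta^{-1/2}$) $\times$ (focal-spot volume $\eta^{\frac{d+1}{2}}$), and the control of the tails coming from the restriction to $\mathcal{D}^c_\delta$ — so your write-up is, if anything, more complete than the original.
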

\begin{proof}
The proof is based on a simple change of variable:
\begin{align*}
I^c(z(c)+(\Delta z)_\eta, \varpi) = \int_{\mathcal{D}_\delta^c(\varphi_c^{-1}(z(c)+(\Delta z)_\eta))} \left(n_\eps(y,\varpi)-n^\star\right) F^c(z+\Delta z, y)\dd y
\end{align*}
We recall that since $z(c) =  (\eta^{\frac{1}{2}} c^2 s_0, c t_0)$ we can write $z(c)=\varphi_c(y_0)$ for some $y_0\in D$.  
Since
\begin{align*}
z+(\Delta z)_\eta = \varphi_c (y_0)+(\Delta z)_\eta,
\end{align*}  we know from Lemma \ref{lem:symF}
\begin{align*}
 F^{c}\left(z+ (\Delta z)_\eta ,y\right)  = F^c\left(z, y-\varphi_c^{-1}((\Delta z)_\eta)\right) +\mathcal{O}(\eta^{-\frac{1}{2}})
 \end{align*}
Introducing $\tilde{y}:=y-\varphi_c^{-1}((\Delta z)_\eta)$ and using the stationarity of $n$, we get
\begin{align*}
I^c(z(c)+(\Delta z)_\eta, \varpi) = \int_{\mathcal{D}_\delta^c(y_0)}\left(n_\eps(\tilde{y},\tau_{\varphi_c^{-1}((\Delta z)_\eta)}\varpi)-n^\star\right) F^c(z, \tilde{y})\dd \tilde{y}.
\end{align*}

 \end{proof}
 
We can now use proposition \ref{prop:localstationarity} to show that if one averages spatially the squared modulus of the imaging function over a small area of typical size $a$ of the order of the central wavelength,  then we can approximate the statistical variance of the imaging functional.
\begin{proposition}[From ensemble to spatial averaging] \label{prop:quantificationvariance}For $a\sim\eta \frac{c^\star}{\omega_0}$,
\begin{align*}
\E\left[\left|\E\left[\left\vert I^c(z(c)\right\vert^2 \right]-\fint_{\square{a}} \left|I^c(z(c)+\Delta z)\right|^2\dd \Delta z \right|^2\right]\lesssim \eta^{2\alpha d} \|\Phi^{\frac{1}{2}}(\vert \cdot\vert) \|^2_{L^1(\R^d)} \left(\int_{\mathcal{D}_\delta^c(y_0)}|F^c(z,y)|^2\dd y\right)^2,
\end{align*} where $\square{a}:= [-\frac{a}{2},\frac{a}{2}]^d$. 
\end{proposition}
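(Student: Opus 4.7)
The plan is in three steps: reduce the spatial average on $\square_a$ to an ergodic average over translations of $\varpi\in\Omega$ via Proposition~\ref{prop:localstationarity}; dominate the resulting variance by $\Var(|I^c|^2)$ through Cauchy--Schwarz on the covariance; and bound $\Var(|I^c|^2)$ via the pair-contraction decomposition of the four-point correlation of $\tilde n_\varepsilon := n_\varepsilon - n^\star$, already familiar from the proof of Lemma~\ref{lem:incoh}.

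\emph{Reduction.} For $\Delta z\in\square_a$ with $a\sim\eta c^\star/\omega_0$, the components of $\Delta z$ are of order $\eta$, placing $\Delta z$ in the regime $(\Delta z)_\eta$ covered by Proposition~\ref{prop:localstationarity}. That proposition gives $I^c(z(c)+\Delta z,\varpi) = I^c(z(c),\tau_{\varphi_c^{-1}(\Delta z)}\varpi) + \mathcal{O}(\eta^{1+(d-2)/2})$. Squaring and applying the linear change of variable $u = \varphi_c^{-1}(\Delta z)$ -- recall that $\varphi_c$ is a fixed invertible linear map of $\R^d$ with constant determinant -- converts the integral over $\square_a$ into $\fint_B |I^c(z(c),\tau_u\varpi)|^2\,du$ with $B := \varphi_c^{-1}(\square_a)$ having sides comparable to $a$, plus a cross/remainder term controlled in mean square by the pointwise bound on the error in Proposition~\ref{prop:localstationarity} together with the uniform estimate $|I^c| = \mathcal{O}(\eta^{(d-1)/2})$; this remainder is of strictly higher order than the target and is absorbed. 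Stationarity of $n$ yields $\E[|I^c(z(c),\tau_u\varpi)|^2] = \E[X]$ with $X := |I^c(z(c))|^2$, so the spatial average is an unbiased estimator of $\E[X]$.

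\emph{Variance estimate.} By Fubini and the Cauchy--Schwarz bound $\Cov(X(\tau_u),X(\tau_v))\leq\sqrt{\Var(X(\tau_u))\Var(X(\tau_v))}=\Var(X)$,
$\E\bigl[|\fint_B X(\tau_u\varpi) du - \E[X]|^2\bigr] = \fint_B\fint_B \Cov(X(\tau_u),X(\tau_v))\,du\,dv \leq \Var(X).$
Writing $\Var(X) = \E[|I^c|^4] - (\E[|I^c|^2])^2$ and expanding the fourth moment as a fourfold integral of the four-point correlation of $\tilde n_\varepsilon$ against $F^c(y_1)\overline{F^c(y_2)}F^c(y_3)\overline{F^c(y_4)}$, I decompose the correlation as the three pair contractions $C_{12}C_{34} + C_{13}C_{24} + C_{14}C_{23}$, with $C_{ij} := C((y_i-y_j)/\varepsilon)$, plus the fourth cumulant $\kappa_4$. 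The pairing $C_{12}C_{34}$ produces exactly $(\E[|I^c|^2])^2$ and cancels in $\Var(X)$. Each of the two remaining pairings is treated as in the proof of Lemma~\ref{lem:incoh} (change of variable rescaling the $\varepsilon$-dependence out of $C$, then Taylor expansion of $F^c$ at scale $\varepsilon\ll\eta$), yielding at leading order a term of size $\varepsilon^{2d}\bigl(\int_{\R^d}|C|\bigr)^2\bigl(\int_{\mathcal{D}_\delta^c(y_0)}|F^c(z,y)|^2\,dy\bigr)^2$. Using $|C|\leq\Phi$ together with the elementary H\"older bound $\|\Phi\|_{L^1} \leq \|\Phi\|_\infty^{1/2}\|\Phi^{1/2}\|_{L^1}$ (legitimate since $\Phi$ is bounded) absorbs $\|\Phi\|_\infty$ into the implicit constant, and recalling $\varepsilon=\eta^\alpha$ yields the asserted $\eta^{2\alpha d}\|\Phi^{1/2}(|\cdot|)\|_{L^1(\R^d)}^2\bigl(\int_{\mathcal{D}_\delta^c(y_0)}|F^c(z,y)|^2\,dy\bigr)^2$.

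\emph{Main obstacle.} The delicate piece is the fourth cumulant $\kappa_4$, which does not vanish without a Gaussianity hypothesis on $n$. Bounding it requires invoking the mixing decay \eqref{eq:Decaying} applied to the worst configuration of the four arguments, producing a bound by a product of two $\Phi$-factors at the rescaled pairwise separations, which fits into the same $\varepsilon^{2d}\|\Phi^{1/2}\|_{L^1}^2(\int|F^c|^2)^2$ estimate. The remaining ingredients -- the change of variable, the Taylor expansion of $F^c$ at scale $\varepsilon$, and the use of the decay estimate $\varepsilon R \ll \eta$ from the appendix -- are inherited from the proof of Lemma~\ref{lem:incoh}.
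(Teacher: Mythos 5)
Your argument is correct and rests on the same three pillars as the paper's proof --- Proposition \ref{prop:localstationarity} to trade the spatial shift $\Delta z$ for a shift $\tau_{\varphi_c^{-1}((\Delta z)_\eta)}$ of the medium, the mixing bound \eqref{eq:covmult} on the fourth-order correlation of $n_\eps-n^\star$, and the rescaling-plus-Taylor mechanism of Lemma \ref{lem:incoh} --- but it organizes them differently. The paper expands the square of the difference and bounds every resulting fourth moment $\E\left[|I^c(z(c)+\Delta z)|^2|I^c(z(c)+\Delta z')|^2\right]$ by $\eps^{2d}\|\Phi^{\frac12}\|_{L^1}^2\left(\int_{\mathcal{D}_\delta^c(y_0)}|F^c(z,y)|^2\dd y\right)^2$ directly from the permutation bound, exploiting no cancellation; you first dominate the variance of the spatial average by $\Var(|I^c(z(c))|^2)$ via Cauchy--Schwarz on the covariance, then use a cumulant decomposition in which the $(12)(34)$ contraction cancels $(\E[|I^c|^2])^2$. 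That cancellation is genuine but gains nothing quantitatively: the $(14)(23)$ contraction pairs an unconjugated kernel with a conjugated one and reproduces $(\E[|I^c|^2])^2$ exactly (the usual speckle identity), so both routes terminate at a bound of the same order as $(\E[|I^c|^2])^2$, which is all the proposition asserts. Your formulation is the better launching point if one ever wanted an estimate that actually decays, since it isolates precisely the covariance decay over the averaging cell that would have to be exploited (requiring the cell to span many decorrelation lengths), and it makes explicit the non-Gaussian fourth cumulant that the paper's moment bound silently absorbs. One loose end is inherited from the paper rather than introduced by you: the remainder coming from Proposition \ref{prop:localstationarity} (the paper's $\mathcal{O}(\eta^{\frac32+2d})$, your cross term) is declared to be of higher order without being compared to $\eta^{2\alpha d}\left(\int_{\mathcal{D}_\delta^c(y_0)}|F^c(z,y)|^2\dd y\right)^2$ under the constraint $\alpha>\frac52$ of Remark \ref{rem:alpha}, and that comparison does not obviously go the right way; since the paper's own proof drops this term with the same silence, I do not count it against you.
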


\begin{proof} We first write
\begin{align*}
\E\left[\left[\fint_{\square{a}}\left|I^c(z(c)+\Delta z)\right|^2\dd \Delta z\right|^2 \right]=\fint_{\square{a}^2}\E\left[\left|I^c(z(c)+\Delta z)\right|^2\left|I^c(z(c)+\Delta z')\right|^2\right]\dd \Delta z\dd \Delta z'
\end{align*}
We know from the previous proposition that
\begin{multline*}
\E\left[\left|I^c(z(c)+\Delta z)\right|^2\left|I^c(z(c)+\Delta z')\right|^2\right]=\int_{\mathcal{D}^c_\delta(y_0)^4}\E[(n_\eps(y-\varphi_c^{-1}(\Delta z))-n^\star)\\(n_\eps(y'-\varphi_c^{-1}(\Delta z))-n^\star)(n_\eps(\tilde{y}-\varphi_c^{-1}(\Delta z'))-n^\star)(n_\eps(\tilde{y}'-\varphi_c^{-1}(\Delta z'))-n^\star)]F^c(z,y)F^c(z,y')F^c(z,\tilde{y})\\F^c(z,\tilde{y}')\dd y \dd y'\dd \tilde{y} \dd \tilde{y}'+\mathcal{O}(\eta^{\frac{3}{2}+2d}).
\end{multline*}
Using \cite[Lemma~IV-4]{kushner1984approximation} or \cite[Lemma~2.1]{bal2008central}, we can bound the $4$th-order moment of $n_\eps$ as follows
\begin{multline}\label{eq:covmult}\left|\E[(n_\eps(y-\varphi_c^{-1}(\Delta z))-n^\star)(n_\eps(y'-\varphi_c^{-1}(\Delta z))-n^\star)\right.\\ \left.(n_\eps(\tilde{y}-\varphi_c^{-1}(\Delta z'))-n^\star)(n_\eps(\tilde{y}'-\varphi_c^{-1}(\Delta z'))-n^\star)]\right|\lesssim \sum_{(y_1,y_2,y_3,y_4) \in \mathcal{U} }\Phi\left(\frac{\vert y_1-y_2\vert }{\eps}\right)^{\frac{1}{2}}\Phi\left(\frac{\vert y_3-y_4\vert }{\eps}\right)^{\frac{1}{2}},\end{multline}
where $\mathcal{U}$ is the set of all permutations of $((y-\varphi_c^{-1}(\Delta z), y'-\varphi_c^{-1}(\Delta z), \tilde{y}-\varphi_c^{-1}(\Delta z'), \tilde{y}'-\varphi_c^{-1}(\Delta z')))$.
After a change of variables, we obtain
\begin{equation*}
\E\left[\left|I^c(z(c)+\Delta z)\right|^2\left|I^c(z(c)+\Delta z')\right|^2\right]\leq \eps^{2 d}\|\Phi^{\frac{1}{2}}\|_{L^1(\R^+)}^2\left(\int_{\mathcal{D}_\delta^c(y_0)}|F^c(z,y)|^2\dd y\right)^2.
\end{equation*}
We used here that $\Phi^{\frac{1}{2}}(\vert \cdot\vert) \in L^1(\R^d)$ (see \eqref{eq:Decaying} and lemma \ref{lem:appendix}).
Similarly we know from Lemma \ref{lem:incoh} that 
\begin{equation*}
\E\left[\left\vert I^c(z(c)\right\vert^2 \right]\sim \eps^{d}\int_{\R^d} C(s)\dd s\,   \int_{\mathcal{D}_\delta^c(y_0)}|F^c(z,y)|^2\dd y.
\end{equation*}
To conclude,  recall that $\varepsilon = \eta^\alpha$ and we get the result.  
\end{proof}

\subsection{Discussion}

\subsubsection{Summary of the results}

\begin{enumerate}
\item For a fixed time $t_0$, for any $c$ the pixel at position $z(c)=(0,ct_0)$ always probes a small area $\mathcal{D}^c(y_0)$  in the domain around $y_0= (0,c^\star t_0)$ regardless of $c$.  
\begin{align*}
I^c(z(c))\sim  \int_{\mathcal{D}^c(y_0)}\left(n_\varepsilon(y)-n^\star\right) F^c(z(c),y) \dd y 
\end{align*}
Proposition \ref{prop:imagingspeckle} gives us an asymptotic expansion of $F^c(z(c),y)$ for $y$ in the small region.    The shape and size of this region $\mathcal{D}^c(y_0)$ depend on $\frac{c^\star}{c}$ via the behavior of $\mathcal{G}_\theta$ but the orders of magnitude of both transverse and axial size of the domain $\mathcal{D}^c(y_0)$ remains the same over a range of $c$ (see remark \ref{rem:sizedualfocal}).  The takeaway is that $y\mapsto F^c(z(c),y)$ varies slowly over $\mathcal{D}^c(y_0)$.  Therefore one can see $I^c(z(c))$ as the convolution of a slowly varying (at the scale of the focal spot) function parametrized by $c$ with the realization of a random process over a domain centered around a fixed point but whose shape is distorted by variations of $c$. 
\item In practice (and in the theoretical model) the separation of scale is such that \mbox{$y\mapsto n_\varepsilon(y)-n^\star$} is the realization of a random process that oscillates spatially at a scale of order $\varepsilon$ much smaller than the size of the dual focal spot $\mathcal{D}^c(y_0)$.    Lemma \ref{lem:incoh} quantifies this qualitative observation and states that the variance of the random variable $I^c(z(c))$ is proportional to $\int_{\mathcal{D}^c(y_0)}\vert F^c(z(c),y)\vert^2 \dd y$. 
\item We know from section \ref{sec:confocal} that $c\mapsto F^c(z(c),y_0)$ is a function strongly peaked at $c=c^\star$ (see figure \ref{fig:I(z(c))}).  Using the asymptotic expansion of proposition  \ref{prop:imagingspeckle} which characterizes the size of the focal spot, we can do a Taylor expansion of $y\mapsto F^c(z(c),y)$ around $y_0$ that is uniformly valid with respect to $c$  in $\mathcal{D}^c(y_0)$.   We then deduce that $\int_{\mathcal{D}^c(y_0)}\vert F^c(z(c),y)\vert^2 \dd y$ behaves like  $\vert F^c(z(c),y_0)\vert^2$ and  exhibits also a strong peak at $c=c^\star$.  Therefore the variance of $c\mapsto I^c(z(c))$ exhibits a maximum at $c=c^\star$. 
\item In practice however,  one has access to only one realization of the medium.  This problem of accessing statistical moments of $c\mapsto I^c(z(c))$ can be worked around using the stationarity of the medium.  Lemma \ref{lem:symF}  states that in the proper asymptotic regime, shifting the imaging point by a \emph{small} $\Delta z$ (of the order of the wavelength) corresponds to  shifting the probed area $\mathcal{D}^c(y_0)$ by some \emph{small} $\Delta y_0$ (of the order of the wavelength too). Since $\Delta y\gg \varepsilon r_0$,  $n_\varepsilon(y)-n^\star$ in $\mathcal{D}^c(y_0)$ and $\mathcal{D}^c(y_0)+\Delta y_0$ can be considered as independent realizations of the distribution of scatterers in  $\mathcal{D}^c(y_0)$ (proposition \ref{prop:localstationarity}). The quantification of the error between the statistical and spatial variance of $c\mapsto I^c(z(c))$ is done in proposition \ref{prop:quantificationvariance}. 
\end{enumerate}

\subsubsection{Practical recovery of $c^\star$ }\label{sec:practical}
We show in this section how the estimator can be constructed in practice as done in \cite{bureau2024reflection} for a two dimensional medium. 
We assume that $c^\star \in [c_{min},c_{max}]$.  We uniformly discretize the set of possible values for $c$ with $N_c$ points  as $c_j= c_{min} + j\Delta c$. 

\begin{enumerate}
\item Fix a travel time $t_0\in \R^+$. The transverse offset $s_0$ can be set to $0$ without loss of generality. 
For every $j\in [1,N_c]$,  consider the moving imaging point
\begin{align*}
z_j=\psi_{c_j}(0,t_0) := \left(0, c_j t_0\right).
\end{align*}
\item Fix a set of $N_\Delta$ spatial shifts $(\Delta z)_i \in \left(\R^2\right)^{N_\Delta}$ satisfying $\varepsilon r_0 \ll \vert \Delta z\vert <\eta \frac{c^\star}{\omega_0}$. 
\item Compute the matrix \begin{align*}
\mathcal{K}_{i,j}= I^{c_j}(z_j+\Delta z_i).
\end{align*}
\item Compute the variance of $K$ with respect to its rows:
\begin{align*}
F_j= \frac{1}{N_{\Delta}} \sum_{i=1}^{N_\Delta} \vert \mathcal{K}_{i,j}^2\vert. 
\end{align*} 
\item Compute $j^\star = \mathrm{argmax}_j \,\vert F_j\vert $.  
\end{enumerate}
We proved that if $N_\Delta$ is large enough then we have 
\begin{align*}
\left \vert c^\star- c_{j^\star} \right\vert \leq \Delta c. 
\end{align*}
\begin{remark}Note that the computation of matrix $M$ at the third step does not require computing the whole image.  Only $N_\delta\times N_c$ pixels of the image are computed.  In practice $N_\Delta$ is of order of a few dozen. 
\end{remark}

\section{Numerical illustrations}\label{sec:numerical}

The numerical simulations are performed in the time domain by using the K-Wave library \cite{treeby2010k}. For a given $x_e \in \R^2$, and a given end time $T > 0$, we simulate $U_e(x, t)$,  the solution in $L^2([0, T], H^1(\R^d \setminus \{{x_e}\}))$ of:
\begin{equation}
\left\{\begin{aligned}
& -\Delta U_e(x, t) + n(x) \partial_{tt}^2 U_e(x, t) = \delta(x - x_e) f(t), && \text{for } x \in \R^2 \setminus \{x_e\},~ t \in [0, T], \\
& U_e(x, 0) = \partial_t U_e(x, 0) = 0,
\end{aligned} \right.
\end{equation}
with $f(t) \in C^0([0, T])$ (see Figure~\ref{fig:InputSignal}). The signal $U_e({x_r}, t)$ is then recorded on the transducers at $x_r$. \\
By considering $N_{e} = 15$ incident waves emitted at $\{x_e\}_{e = 1...N_e}$ and $N_r = 64$ recording sensors placed at $\{x_r\}_{r = 1...N_r}$, we have access to the matrix of data $M(\omega_k) \in \mathbb{C}^{N_{e} \times N_{r}}$ where
$$M(x_e,x_r,\omega_k) = \mathcal{F}(U_e(x_r, \cdot))(\omega_k)$$ 
with $\mathcal{F}$ the Fourier transform and $\{\omega_k\}_{k =1...N_\omega}$ are the sampling frequencies in $\mathcal{B}=[10^2;10^8]$. Here $N_\omega = 1000$. The sensors $\{x_e\}_{e = 1...N_e}$ and $\{x_r\}_{j = 1...N_r}$ are equally spaced on the segment $[-\ell, \ell] \times \{0\} $ with $\ell= 1.5 \times 10^{-2}$ m.  \\
As the problem is a multi-scale problem, the simulation of $M$ can hardly be done on a personal laptop. Indeed, it requires to mesh the small inclusions to capture their effects. With  the CFL condition taken to be $0.4$,  the computations can become quite long. However, for $e=1...N_e$, the simulations of $U_e $ can be done in parallel, and are then computed on Nvidia Tesla V100 GPUs. For the choice of parameters, it takes $\sim 10$ hours per simulation. \\
The parameters $N_e$, $N_r$ and $N_\omega$ are chosen such that the matrix $M$ can be processed on a personal laptop.

We now consider the medium of Figure~\ref{fig:Homogenization} which is composed on 22710 scatterers. The distribution of the centers of the scatterers is a realization of  a Mat\`ern point process (see \cite[Section~6.5.2]{Matern}). In Figure~\ref{fig:Homogenization}, there are $22710$ scatterers with up to $30 \%$ contrast in the speed of sound and a typical radius $\eps r_0 = 7.5 \times 10^{-5}$ m. This corresponds to a particle volume fraction of $15 \%$. 

\begin{figure}
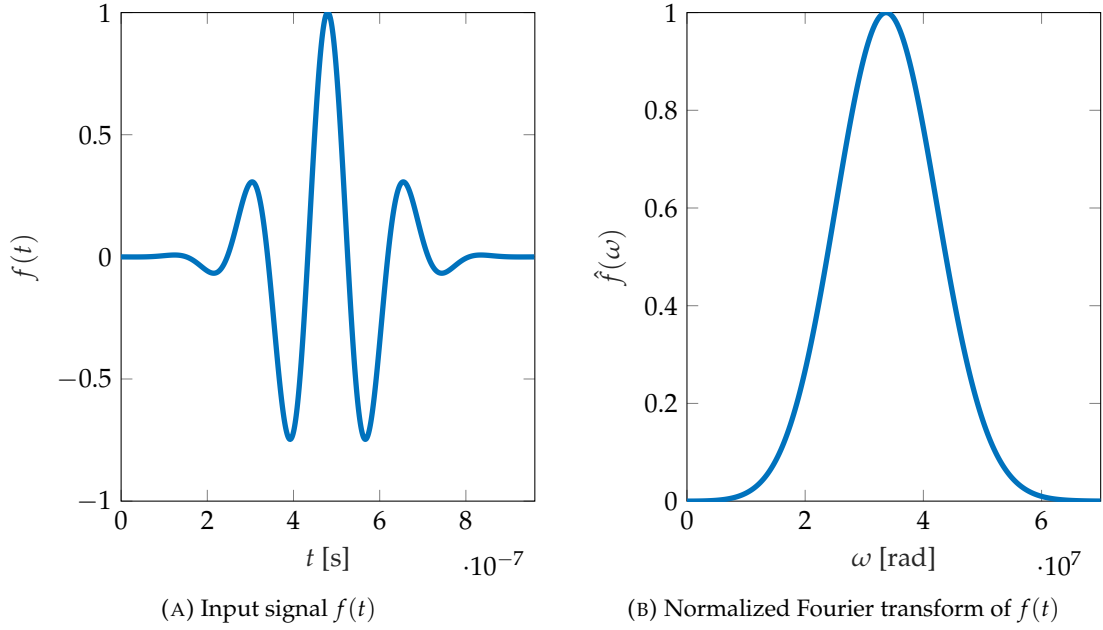

\begin{subfigure}{0.45\textwidth}
\centering
\input{figuresV2/source1.tikz}
\caption{Input signal $f(t)$}
\end{subfigure}
\begin{subfigure}{0.45\textwidth}
\centering
\input{figuresV2/source2.tikz}
\caption{Normalized Fourier transform of $f(t)$}
\end{subfigure}
\caption{Source term used in the simulation.}
\label{fig:InputSignal}
\end{figure}  

\begin{figure}
\centering
\input{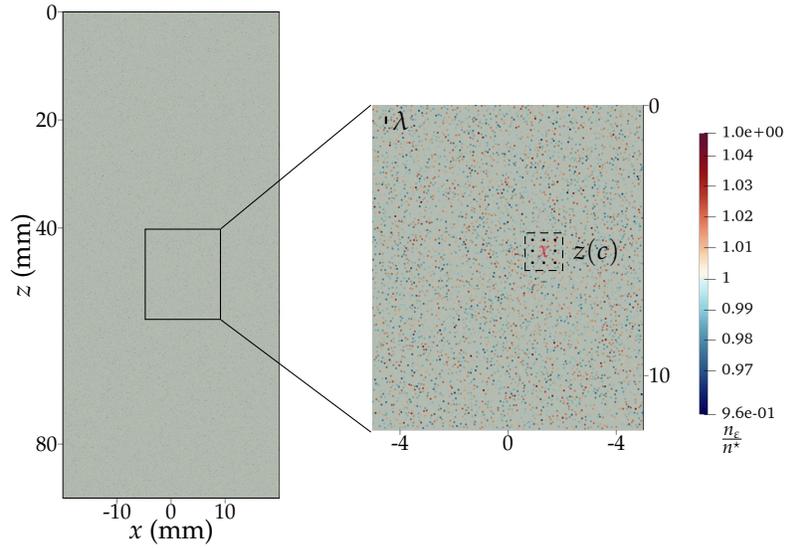}
\caption{Speed of sound map in the random multi-scale medium used in the simulations. }
\label{fig:Homogenization}

\end{figure}

We follow the procedure of section \ref{sec:practical} with $s_0=0$ and  $t_0 = 3\times 10^{-5}$ s. As $c^\star= 1500$ m.s$^{-1}$, this corresponds to $y_0 = (0, 45)$ mm.  The matrix $\mathcal{K}$ is shown on figure \ref{fig:matriceK}. The estimator is plotted on figure \ref{fig:SVDestimators} and  compared to the theoretical expression given in lemma \ref{lem:incoh}.

\begin{figure}

\centering
%
%
\begin{tikzpicture}[scale = 1]

\begin{axis}[%
width= 6.4 cm,
height=7.2cm,
scale only axis,
axis on top,
xlabel style={font=\large\color{white!15!black}},
ylabel style={font=\large\color{white!15!black}},
legend style={legend cell align=left, align=left, draw=white!15!black, font=\tiny},
xtick={0.76, 0.88,...,1.24},
xmin=0.745,
xmax=1.255,
xlabel={$\frac{c_j}{c^\star}$},
y dir=reverse,
ymin=0.5,
ymax=225.5,
ytick={\empty},
ylabel={$(\Delta z)_i$},
axis background/.style={fill=white}
]
\addplot [forget plot] graphics [xmin=0.745, xmax=1.255, ymin=0.5, ymax=225.5] {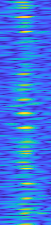};
\end{axis}

\end{tikzpicture}%
\caption{Visualization of  $\vert \mathcal{K}_{ij}\vert = \vert I^{c_j}(z(c_j)+(\Delta z)_i,\varpi_0 )\vert$ for a fixed realization $\varpi_0\in \Omega$. }\label{fig:matriceK}

\end{figure}


\begin{figure}
\center
%
%
\definecolor{mycolor1}{rgb}{0.00000,0.44700,0.74100}%
\definecolor{mycolor2}{rgb}{0.85000,0.32500,0.09800}%
\begin{tikzpicture}[scale=1 ]

\begin{axis}[%
width=3.36in,
height=2.88in,
scale only axis,
xlabel style={font=\normalsize\color{white!15!black}},
ylabel style={font=\normalsize\color{white!15!black}},
axis background/.style={fill=white},
legend style={nodes={scale=1, transform shape},legend cell align=left, align=left, draw=white!15!black},
xmin=0.75,
xmax=1.25,
ymin=0,
ymax=1,
xlabel={$\frac{c}{c^\star}$},
]
\addplot [color=mycolor1, line width=0.6pt]
  table[row sep=crcr]{%
0.75	0.190908172523314\\
0.76	0.193338403281153\\
0.77	0.196957609916415\\
0.78	0.202040234140363\\
0.79	0.208791276171942\\
0.8	0.217316215193736\\
0.81	0.227607050791854\\
0.82	0.239555433168362\\
0.83	0.253001636066092\\
0.84	0.267822176333486\\
0.85	0.284049172313276\\
0.86	0.3020022100934\\
0.87	0.322401048784547\\
0.88	0.346418590678067\\
0.89	0.37563231103163\\
0.9	0.411842325469838\\
0.91	0.456747052386442\\
0.92	0.511501351085069\\
0.93	0.57622146014933\\
0.94	0.64953669332774\\
0.95	0.728308203773792\\
0.96	0.807629449584025\\
0.97	0.881184894231194\\
0.98	0.941974399879971\\
0.99	0.983322022376479\\
1	1\\
1.01	0.989237346169593\\
1.02	0.951371497312368\\
1.03	0.88995460846259\\
1.04	0.811239698259579\\
1.05	0.723122197673088\\
1.06	0.633759500613463\\
1.07	0.550189043220888\\
1.08	0.47727768609738\\
1.09	0.417249031196794\\
1.1	0.36987032726131\\
1.11	0.333186169822912\\
1.12	0.30452697984727\\
1.13	0.281453441333344\\
1.14	0.262350223126023\\
1.15	0.24653558062135\\
1.16	0.233948899367596\\
1.17	0.224637236491214\\
1.18	0.218318521964673\\
1.19	0.21423029820544\\
1.2	0.21131189739083\\
1.21	0.208592024512837\\
1.22	0.205549425270454\\
1.23	0.202235129275208\\
1.24	0.199081560035924\\
1.25	0.196505385158047\\
};
 \addlegendentry{Simulation}

\addplot [color=mycolor2, line width=0.6pt]
  table[row sep=crcr]{%
0.75	0.083972909473642\\
0.75251256281407	0.0867720272506216\\
0.755025125628141	0.0896873881176061\\
0.757537688442211	0.0927183056842237\\
0.760050251256281	0.0958635575096585\\
0.762562814070352	0.099121374767664\\
0.765075376884422	0.102489437151026\\
0.767587939698492	0.105964873479941\\
0.770100502512563	0.10954426845255\\
0.772613065326633	0.113223675942536\\
0.775125628140704	0.116998639208031\\
0.777638190954774	0.120864218327993\\
0.780150753768844	0.124815025126597\\
0.782663316582915	0.128845265783132\\
0.785175879396985	0.132948791254479\\
0.787688442211055	0.137119155559721\\
0.790201005025126	0.141349681892113\\
0.792713567839196	0.145633536432969\\
0.795226130653266	0.14996380964557\\
0.797738693467337	0.154333604725556\\
0.800251256281407	0.15873613277834\\
0.802763819095477	0.163164814184676\\
0.805276381909548	0.167613385503636\\
0.807788944723618	0.172076011149171\\
0.810301507537688	0.176547398963152\\
0.812814070351759	0.181022918695895\\
0.815326633165829	0.185498722295876\\
0.8178391959799	0.189971864805334\\
0.82035175879397	0.194440424559136\\
0.82286432160804	0.198903621292458\\
0.825376884422111	0.20336193068\\
0.827889447236181	0.207817193757447\\
0.830402010050251	0.212272719616262\\
0.832914572864322	0.216733379717412\\
0.835427135678392	0.221205692139863\\
0.837939698492462	0.225697894067093\\
0.840452261306533	0.230220000821106\\
0.842964824120603	0.234783849779544\\
0.845477386934673	0.239403127558934\\
0.847989949748744	0.244093378916593\\
0.850502512562814	0.24887199591629\\
0.853015075376884	0.253758186018781\\
0.855527638190955	0.258772917898242\\
0.858040201005025	0.263938843949357\\
0.860552763819096	0.26928019863715\\
0.863065326633166	0.274822672051995\\
0.865577889447236	0.280593258264599\\
0.868090452261307	0.286620078328962\\
0.870603015075377	0.292932178053649\\
0.873115577889447	0.299559300951222\\
0.875628140703518	0.306531637079999\\
0.878140703517588	0.313879548808745\\
0.880653266331658	0.321633274860358\\
0.883165829145729	0.329822614321776\\
0.885678391959799	0.33847659264041\\
0.888190954773869	0.347623111958572\\
0.89070351758794	0.357288588462185\\
0.89321608040201	0.367497579734394\\
0.89572864321608	0.378272405403651\\
0.898241206030151	0.389632764655\\
0.900753768844221	0.401595354427681\\
0.903266331658291	0.414173492347232\\
0.905778894472362	0.42737674863121\\
0.908291457286432	0.441210591359995\\
0.910804020100503	0.455676049613656\\
0.913316582914573	0.470769399038344\\
0.915829145728643	0.486481874417715\\
0.918341708542714	0.502799413783028\\
0.920854271356784	0.519702438497303\\
0.923366834170854	0.537165673592037\\
0.925879396984925	0.555158012417944\\
0.928391959798995	0.573642429393466\\
0.930904522613065	0.592575944296127\\
0.933417085427136	0.611909641143219\\
0.935929648241206	0.631588744251458\\
0.938442211055276	0.651552753552602\\
0.940954773869347	0.671735640677245\\
0.943467336683417	0.692066106706296\\
0.945979899497487	0.712467901834547\\
0.948492462311558	0.732860206499116\\
0.951005025125628	0.753158072804772\\
0.953517587939699	0.773272924335824\\
0.956030150753769	0.793113111688975\\
0.958542713567839	0.812584520302618\\
0.96105527638191	0.83159122640513\\
0.96356783919598	0.850036196168056\\
0.96608040201005	0.867822022440215\\
0.968592964824121	0.884851692766331\\
0.971105527638191	0.901029381769649\\
0.973618090452261	0.916261260412607\\
0.976130653266332	0.930456314153515\\
0.978643216080402	0.943527161599979\\
0.981155778894472	0.955390864931009\\
0.983668341708543	0.965969723127602\\
0.986180904522613	0.97519203892363\\
0.988693467336683	0.982992850371547\\
0.991206030150754	0.989314618015577\\
0.993718592964824	0.994107858882553\\
0.996231155778894	0.997331718839258\\
0.998743718592965	0.998954475325566\\
1.00125628140704	0.998953963053431\\
1.00376884422111	0.997317915959612\\
1.00628140703518	0.994044219510047\\
1.00879396984925	0.989141068368869\\
1.01130653266332	0.982627025456621\\
1.01381909547739	0.97453097951948\\
1.01633165829146	0.964891999502155\\
1.01884422110553	0.953759085247414\\
1.0213567839196	0.941190815319729\\
1.02386934673367	0.927254894052532\\
1.02638190954774	0.912027601230311\\
1.02889447236181	0.895593149119666\\
1.03140703517588	0.878042952838316\\
1.03391959798995	0.859474821278413\\
1.03643216080402	0.839992076960992\\
1.03894472361809	0.819702614272695\\
1.04145728643216	0.798717906505779\\
1.04396984924623	0.777151972970173\\
1.0464824120603	0.755120318155862\\
1.04899497487437	0.732738855480646\\
1.05150753768844	0.710122828549669\\
1.05402010050251	0.687385743068562\\
1.05653266331658	0.664638322583839\\
1.05904522613065	0.641987501066937\\
1.06155778894472	0.619535465009683\\
1.06407035175879	0.597378757159872\\
1.06658291457286	0.575607453299575\\
1.06909547738693	0.554304422563021\\
1.07160804020101	0.533544680715549\\
1.07412060301508	0.513394844583177\\
1.07663316582915	0.49391269445055\\
1.07914572864322	0.475146849751776\\
1.08165829145729	0.457136561786012\\
1.08417085427136	0.439911625521059\\
1.08668341708543	0.42349241082947\\
1.0891959798995	0.407890011759893\\
1.09170854271357	0.393106510709639\\
1.09422110552764	0.379135352661675\\
1.09673366834171	0.365961823009094\\
1.09924623115578	0.353563620940891\\
1.10175879396985	0.34191151893211\\
1.10427135678392	0.330970097594657\\
1.10678391959799	0.320698544026314\\
1.10929648241206	0.311051500865804\\
1.11180904522613	0.30197995253942\\
1.1143216080402	0.293432134684587\\
1.11683417085427	0.2853544524687\\
1.11934673366834	0.277692393494717\\
1.12185929648241	0.270391421200842\\
1.12437185929648	0.263397835118397\\
1.12688442211055	0.256659585043353\\
1.12939698492462	0.25012702709202\\
1.13190954773869	0.243753610734969\\
1.13442211055276	0.237496487215938\\
1.13693467336683	0.231317031240947\\
1.1394472361809	0.225181269440496\\
1.14195979899497	0.219060210834685\\
1.14447236180905	0.21293007633519\\
1.14698492462312	0.206772426165257\\
1.14949748743719	0.200574185933833\\
1.15201005025126	0.194327573927055\\
1.15452261306533	0.188029933943882\\
1.1570351758794	0.181683479668307\\
1.15954773869347	0.175294958105426\\
1.16206030150754	0.168875240982305\\
1.16457286432161	0.162438854199894\\
1.16708542713568	0.156003456395558\\
1.16959798994975	0.149589278418043\\
1.17211055276382	0.143218536013262\\
1.17462311557789	0.136914828261003\\
1.17713567839196	0.130702534285488\\
1.17964824120603	0.124606220488363\\
1.1821608040201	0.118650070028292\\
1.18467336683417	0.112857345509674\\
1.18718592964824	0.107249894861817\\
1.18969849246231	0.101847709212264\\
1.19221105527638	0.0966685402107431\\
1.19472361809045	0.0917275827748376\\
1.19723618090452	0.0870372276387584\\
1.19974874371859	0.0826068864293898\\
1.20226130653266	0.0784428903069873\\
1.20477386934673	0.0745484615303402\\
1.2072864321608	0.070923755676335\\
1.20979899497487	0.0675659706989355\\
1.21231155778894	0.0644695175878317\\
1.21482412060302	0.0616262461146416\\
1.21733668341709	0.0590257180630426\\
1.21984924623116	0.0566555194525774\\
1.22236180904523	0.0545016026030439\\
1.2248743718593	0.0525486484606522\\
1.22738693467337	0.0507804394259562\\
1.22989949748744	0.0491802329881501\\
1.23241206030151	0.0477311267757693\\
1.23492462311558	0.0464164061690986\\
1.23743718592965	0.0452198663678333\\
1.23994974874372	0.044126101746537\\
1.24246231155779	0.0431207564331102\\
1.24497487437186	0.0421907312806728\\
1.24748743718593	0.041324343736474\\
1.25	0.0405114385058225\\
};
 \addlegendentry{Theoretical}

\addplot [color=black, dashed, line width=0.6pt, forget plot]
  table[row sep=crcr]{%
1	0\\
1	1\\
};
\end{axis}

\end{tikzpicture}%

\caption{Plot of the  estimator function $\dst c_j\longmapsto F_j= \frac{1}{N_{\Delta}} \sum_{i=1}^{N_\Delta} \vert \mathcal{K}_{i,j}^2\vert. $  \label{fig:SVDestimators}}

\end{figure}
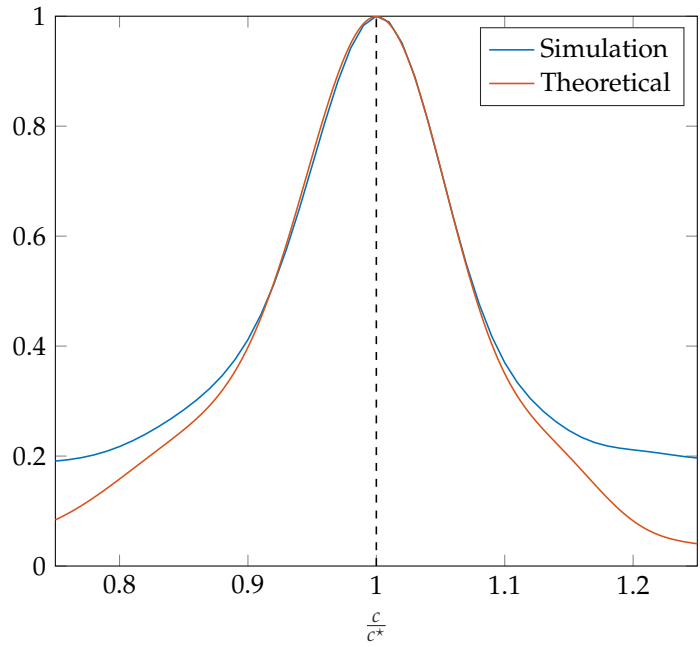 

\section{Perspectives}

We have provided a robust mathematical framework for the analysis of ultrasound imaging signals.   We have proved that that it is possible to recover the speed of sound \emph{in situ} in a homogeneous medium via the construction of sound speed estimators.  Extending these results to more general media (micro-structured with a slowly varying background or a piecewise constant background,  contrast in the divergence\ldots) in order to reconstruct a full map of the speed of sound directly from the backscattering measurements is the subject of a forthcoming work.

\pagebreak

\appendix

\section{Assumptions on the random setting} \label{appendix:random}

Let  $(\Omega, \F, \Pro)$ be a probability space. We define stationarity through an action $(\tau_x)_{x\in\R^d}$ of the group $(\R^d , +)$ on $(\Omega, \F)$ 
that verifies: 
\begin{itemize}
\item[-]the map $\tau: \dst\left\{\begin{array}{ccc}
\vsd\R^d\times \Omega&\longrightarrow&\Omega
\\({x},\varpi)&\longmapsto&\tau_{x} \varpi
\end{array}\right.$ is measurable,
\item[-] $\forall x,y\in\R^d, \tau_{x+y}=\tau_x \circ \tau_y,$ 
\item[-]For all $ x\in\R^d$, $\tau_x$ preserves $\Pro$,  \textit{i.e.}
$$\forall A\in \F,\,\, \Pro(\tau_{x}A)=\Pro(A).$$
\end{itemize} 
We say that a process $f: \R^d \times \Omega \longrightarrow \R$ is stationary if it verifies $a.e.$ $\varpi \in \Omega$, 
\begin{equation} \label{eq:stationary}
f(x + y, \varpi) =f(x, \tau_y \varpi), \qquad \forall x,y \in \R^d.
\end{equation}
We suppose moreover that the action $(\tau_x)_{x\in \R^d}$ is ergodic.

\begin{lemma}[Consequences of \eqref{eq:Decaying}]\label{lem:appendix}
(a) $\Phi(\vert \cdot \vert) \in L^1(\R^d)$ and $\Phi^{\frac{1}{2}}(\vert \cdot \vert) \in L^1(\R^d)$. 
\vsd\\ 
(b) Moreover,  for $R$ large enough \begin{align*}
\left\vert \int_{\R^d}  C(x)\dd x - \int_{B(0,R)} C(x) \dd x \right\vert \lesssim R^{d-1-p}.
\end{align*}
In particular,  for a given $\varepsilon>0$ one can choose $R= \varepsilon^{\frac{1}{d-1-p}}$ and get \begin{align*}
\left\vert \int_{\R^d}  C(x)\dd x - \int_{B(0,R)} C(x) \dd x \right\vert  \lesssim \varepsilon.
\end{align*}
\end{lemma}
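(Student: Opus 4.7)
The plan is to reduce both parts to one-dimensional integrals in the radial variable via polar coordinates, after which the hypothesis reads $\int_0^\infty \Phi(r)\, r^{p+d-1}\dd r < \infty$. The single numerical fact that will drive everything is that $p > 2(d-1)$ implies $p > d$ in every dimension $d \geq 2$, which will be needed to make certain pure-power tails integrable.

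For part (a), after the polar change of variables the question becomes the finiteness of $\int_0^\infty \Phi(r)\, r^{d-1}\dd r$ and $\int_0^\infty \Phi^{1/2}(r)\, r^{d-1}\dd r$. Each is treated by splitting at $r=1$: on $[0,1]$ both integrands are bounded since $\Phi$ is bounded, and on $[1,\infty)$ one has $r^{d-1}\leq r^{p+d-1}$, giving directly $\Phi(|\cdot|)\in L^1(\R^d)$. For the square root, I would apply the Cauchy--Schwarz inequality to the decomposition
\[\Phi^{1/2}(r)\, r^{d-1} = \bigl[\Phi(r)\, r^{p+d-1}\bigr]^{1/2} \cdot r^{(d-1-p)/2},\]
which reduces the tail finiteness to that of $\int_1^\infty r^{d-1-p}\dd r$; this converges exactly when $p > d$, which is precisely where the full strength of the hypothesis $p > 2(d-1)$ enters.

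For part (b), starting from $|C(x)|\leq \Phi(|x|)$ and changing to spherical coordinates, the defect equals a surface-measure constant times $\int_R^\infty \Phi(r)\, r^{d-1}\dd r$. Since $d-1-p<0$, the function $r\mapsto r^{d-1-p}$ is decreasing on $[R,\infty)$, so
\[\int_R^\infty \Phi(r)\, r^{d-1}\dd r = \int_R^\infty \Phi(r)\, r^p \cdot r^{d-1-p}\dd r \leq R^{d-1-p}\int_0^\infty \Phi(r)\, r^p\dd r,\]
and the remaining constant is finite by the same bounded-plus-dominated splitting as in part (a) (using $r^p \leq r^{p+d-1}$ on $[1,\infty)$). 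Substituting the prescribed choice $R = \varepsilon^{1/(d-1-p)}$ immediately yields $R^{d-1-p}=\varepsilon$, giving the quantitative bound claimed.

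The entire argument is routine weighted-integration bookkeeping, so I do not expect any substantive obstacle. The only thing worth keeping an eye on is the sign of the exponent $d-1-p$ (negative throughout, which is what lets one pull $R^{d-1-p}$ out of the tail integral) and the identification of the single step where the hypothesis $p > 2(d-1)$ is genuinely needed rather than merely $p > 0$, namely the Cauchy--Schwarz argument for $\Phi^{1/2}$ in part (a).
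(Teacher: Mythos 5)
Your argument is correct: the polar-coordinate reduction, the split at $r=1$ using boundedness of $\Phi$, the Cauchy--Schwarz step for $\Phi^{1/2}$ (which indeed needs $p>d$, guaranteed by $p>2(d-1)$ for $d\geq 2$), and the extraction of the factor $R^{d-1-p}$ from the tail using monotonicity of $r\mapsto r^{d-1-p}$ are all sound. The paper states this lemma without proof, so there is no authorial argument to compare against; your routine weighted-integration proof is the natural one and fills the gap correctly.
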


\begin{remark}\label{rem:remarkA1}
In practice the decay  rate of $\Phi$ -see eq. \eqref{eq:Decaying}-  encodes the rate of decorrelation of $n$. The idea is that $\Phi$ decays at a scale one order of magnitude smaller than the wavelength,  \emph{i.e} to have $\varepsilon R =o(\eta \frac{c^\star}{\omega_0})$. 
The condition $p>2(d-1)$ is enough for $\Phi(\vert \cdot \vert) \in L^1(\R^d)$ and $\Phi^{\frac{1}{2}}(\vert \cdot \vert)\in L^1(\R^d)$. Moreover this condition is also compatible with the asymptotic regime described in section \ref{sec:asymptotic} in the sense that the condition on $\alpha$ derived  in remark \ref{rem:alpha} is more restrictive.  
\end{remark}

\section{Homogenization in large domains}
\begin{proposition}\label{prop:homolargedomain}
Let $u$ be the solution of \eqref{eq:divformhelmholtz} in $H^2_{loc}(\R^d)$ and $u^i$ the incident wave.  Let $\mathcal{U}^s$ be defined for all $x_e, x_r \in \mathcal{P}$, $\omega \in \mathcal{B}$ by:
$$\mathcal{U}^s(x_e, x_r, \omega) := \omega^2 \int_{D} (n_\eps(x)-n^\star)\Gamma^\frac{\omega}{c^\star}(x, x_e) \Gamma^\frac{\omega}{c^\star}(x, x_r) \dd x.$$ 
It holds:
\begin{equation} \label{eq:homogN}
\norme{(u-u^i -\mathcal{U}^s)(x_e, x_r, \omega)}_{L^2(\Omega)} \lesssim C_\omega \varepsilon^{\frac{d}{p'}}
\end{equation}
for some $C_\omega >0$ independent on $x_e$, $x_r$ and $\eps$ and bounded by $\omega^6 \Vert u^i\Vert_{H^1(D)} + \omega^4$ and $p':=\frac{2+\beta}{1+\beta}$ with $\beta$ chosen such that \begin{align*}
\sup_{x\in D}\, \int_D \left(\Gamma^\frac{\omega}{c^\star}(x,y)\right)^{2+\beta} \dd y <\infty.
\end{align*}
\end{proposition}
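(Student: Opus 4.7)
My starting point is the Lippmann–Schwinger identity obtained by testing the equation $-\Delta u - \omega^2 n_\varepsilon u = 0$ against $\Gamma^{k^\star}(x,\cdot)$ via \eqref{eq:Gstar} and exploiting the radiation conditions. Crucially, under the working assumption $n_0 = n_m$ and $\mathbb{E}[n_S]=0$, one has $n^\star \equiv n_m$ outside $D$, so $\Gamma^{k^\star}$ is the full-space Green function of the operator $-\Delta - \omega^2 n^\star$ and $u^i(y) = \Gamma^{k^\star}(y, x_e)$. This yields, for $x \in \R^d \setminus \overline{D}$,
\begin{equation*}
u(x) - u^i(x) = \omega^2 \int_D (n_\varepsilon(y) - n^\star)\, \Gamma^{k^\star}(x,y)\, u(y)\, \dd y.
\end{equation*}
Splitting $u = u^i + u^s$ inside the integral, the first term reproduces exactly $\mathcal{U}^s(x_e,x,\omega)$, and I obtain the clean error identity
\begin{equation*}
(u - u^i - \mathcal{U}^s)(x) \;=\; \omega^2 \int_D (n_\varepsilon(y)-n^\star)\, \Gamma^{k^\star}(x,y)\, u^s(y)\, \dd y.
\end{equation*}

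\textbf{Step 2: Hölder separation.} The exponent pair $(2+\beta, p') = (2+\beta, (2+\beta)/(1+\beta))$ is the conjugate pair entering the definition of $p'$. Applying Hölder's inequality in $y$ on the right-hand side above,
\begin{equation*}
|(u-u^i-\mathcal{U}^s)(x)| \;\leq\; \omega^2\, \|\Gamma^{k^\star}(x,\cdot)\|_{L^{2+\beta}(D)}\, \|(n_\varepsilon - n^\star)\, u^s\|_{L^{p'}(D)}.
\end{equation*}
The first factor is finite and uniformly bounded for $x = x_e,x_r \in \mathcal{P}$ by the very definition of $\beta$, producing the prefactor $\omega^2 \cdot O(1)$.

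\textbf{Step 3: Stochastic concentration.} Taking the $L^2(\Omega)$ norm, the task reduces to controlling $\mathbb{E}[\|(n_\varepsilon-n^\star)u^s\|_{L^{p'}(D)}^2]^{1/2}$. For the pure fluctuation field $n_\varepsilon - n^\star$ tested against a deterministic $L^{p'}$ function, the standard covariance computation under \eqref{eq:Decaying} gives a gain of $\varepsilon^{d/p'}$ (this is the scale at which cells of size $\varepsilon$ become decorrelated within the $L^{p'}$ norm). The nontrivial point is that the test function $u^s$ is itself a random object depending on $n_\varepsilon$. I would handle this by invoking the deterministic a priori bound $\|u^s\|_{H^1(D)} \lesssim \omega^2 \|u^i\|_{H^1(D)}$ (uniform in $\varepsilon$ and $\varpi$) coming from the well-posedness of \eqref{eq:divformhelmholtz}, combined with the quantitative two-scale expansion result of \cite[Theorem~10]{garnier2023scattered}, which precisely quantifies the stochastic error of the first Born approximation. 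Combining these with the Hölder estimate produces the announced bound by a $C_\omega$ of order $\omega^6 \|u^i\|_{H^1(D)} + \omega^4$, where the $\omega^6$ factor accounts for the two factors of $\omega^2$ (from the equation) together with the $\omega^2$ growth of the Helmholtz a priori estimate, and the $\omega^4$ reflects the additional frequency dependence in the homogenization error of $u^s$.

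\textbf{Main obstacle.} The decisive difficulty lies in Step 3: $u^s$ is \emph{not} independent of $n_\varepsilon$, so the naive covariance expansion $\mathbb{E}\bigl[(\int (n_\varepsilon-n^\star) f)^2\bigr] = \iint C((y-\tilde y)/\varepsilon) f(y)f(\tilde y)\,\dd y\,\dd\tilde y$ does not apply with $f = \Gamma^{k^\star}(x,\cdot)u^s$. The resolution, which is essentially the content of \cite[Theorem~10]{garnier2023scattered}, is to introduce an asymptotic expansion of $u^s$ in powers of $\varepsilon$ whose leading order is the deterministic homogenized scattered field; the stochastic correction is then controlled by the mixing hypothesis \eqref{eq:Decaying} at precisely the rate $\varepsilon^{d/p'}$ dictated by the $L^{2+\beta}$ integrability of the Green function. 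The exponent $p' \in (1,2)$ is optimal: the CLT rate $\varepsilon^{d/2}$ is recovered in the limit $\beta \to 0$, while stronger Green function integrability ($\beta$ large) would formally produce a faster rate, cut off here at the regularity threshold set by the fundamental solution.
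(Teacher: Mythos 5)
There is a genuine gap, and it sits exactly where you flag the ``main obstacle''. The paper does not stop the Born expansion after one step: it substitutes the Lippmann--Schwinger representation of $u$ into itself \emph{twice}, so that
$u-u^i=\mathcal{U}^s+\mathcal{U}^{s,2}+\mathcal{U}^{s,3}$, where $\mathcal{U}^{s,2}$ is an explicit double integral that is \emph{quadratic} in the fluctuation $n_\eps-n^\star$ and involves only the deterministic $u^i$, and $\mathcal{U}^{s,3}$ is cubic in the fluctuation with the full field $u$ appearing only once. The smallness $\varepsilon^{d/p'}$ is then obtained by computing $\E[|\mathcal{U}^{s,2}|^2]$ and $\E[|\mathcal{U}^{s,3}|^2]$ directly as multiple integrals of fourth moments of the fluctuation, bounded via the mixing estimate of \cite[Lemma~2.1]{bal2008central} by products $\Phi^{1/2}\Phi^{1/2}$ of paired points; the exponent $p'$ emerges from a H\"older pairing of $\Phi^{1/2}(\cdot/\eps)$ against $\Vert \Gamma^{\omega/c^\star}\Vert_{L^{2+\beta}}$ \emph{inside} that moment computation, each pair contributing $\varepsilon^{d/p'}$. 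For the cubic remainder the crude a.s.\ bound $\Vert u\Vert_{H^1(D)}\lesssim \Vert u^i\Vert_{H^1(D)}$ suffices precisely because three explicit factors of the fluctuation are already present.

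Your version keeps only one iteration, so the remainder is $\omega^2\int_D(n_\eps-n^\star)\Gamma^{k^\star}(x,\cdot)\,u^s$, and your Step~2 applies H\"older pathwise, reducing the problem to bounding $\Vert (n_\eps-n^\star)u^s\Vert_{L^{p'}(D)}$. This quantity carries no residual decorrelation: since $n_\eps-n^\star$ is merely bounded, it is of the order of $\Vert u^s\Vert_{L^{p'}(D)}$, and establishing that this is $O(\varepsilon^{d/p'})$ is essentially the statement of the proposition itself. Invoking \cite[Theorem~10]{garnier2023scattered} to supply that smallness is circular here, because this appendix proposition \emph{is} the quantitative error control behind that representation formula. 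Two further quantitative points: even granting an independent $\varepsilon^{d/2}$ bound on $u^s$, your pathwise route could not reach the stated rate $\varepsilon^{d/p'}$, which is strictly better than $\varepsilon^{d/2}$ for $\beta>0$ and is tied to the terms being at least quadratic in the fluctuation; and your claim that a single fluctuation factor tested against a deterministic $L^{p'}$ function already gains $\varepsilon^{d/p'}$ is not correct --- the covariance computation at that level yields $\varepsilon^{d/(2p')}$ in $L^2(\Omega)$ at best. The fix is structural: iterate the representation formula once more so that every term you must estimate is an explicit polynomial in $n_\eps-n^\star$ whose moments you can compute before taking any absolute values.
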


\begin{proof}

The starting point is the integral representation formula for the scattered field:
$$u(y)-u^i(y) =\omega^2\int_{D}  \left(n_\eps(x)-n^\star\right) \Gamma^\frac{\omega}{c^\star}(x, y)u(x)\dd x, $$ for $y \in \R^d \setminus \{x_e\}$.
Replacing twice $u$ by its integral representation inside the integral leads to:
\begin{equation}
\begin{split}
u(x_r)-u^i (x_r) & = \int_{D} \omega^2 \left(n_\eps(x)-n^\star\right)  \Gamma^\frac{\omega}{c^\star}(x,x_r) u^i(x) \dd x\\
& \hspace*{-2.5cm} +\omega^4 \int_{D^2} \left(n_\eps(x_1)-n^\star\right)  \left(n_\eps(x_2)-n^\star\right)   \Gamma^\frac{\omega}{c^\star}(x_1, x_2) \Gamma^\frac{\omega}{c^\star}(x_1, x_r) u^i(x_2) \dd x_1 \dd x_2 \\
& \hspace*{-2.5cm} + \omega^6 \int_{D^3}\left(n_\eps(x_1)-n^\star\right)  \left(n_\eps(x_2)-n^\star\right)\left(n_\eps(x_3)-n^\star\right)     \Gamma^\frac{\omega}{c^\star}(x_3, x_2)  \Gamma^\frac{\omega}{c^\star}(x_2, x_1)  \Gamma^\frac{\omega}{c^\star}(x_1, x_r) u(x_3) \dd x_1\dd x_2\dd x_3\\
& :=\mathcal{U}^s(x_r) + \mathcal{U}^{s,2}(x_r)+ \mathcal{U}^{s,3}(x_r). 
\end{split}
\end{equation}

To estimate $\mathcal{U}^{s,2}$ and $\mathcal{U}^{s,3}$ we use  \cite[Lemma~2.1]{bal2008central} with $q=n-n^\star$ (recall that $n_\varepsilon=n(\frac{\cdot}{\varepsilon})$).  The hypothesis $(6)$ from \cite{bal2008central} is verified because the covariance $C$ of $n_\varepsilon$ is  bounded by $\Phi$ in $L^1(\R^+)\cap L^{\frac{1}{2}}(\R^+)$.  
Therefore we have:
\begin{align*}
\left\vert \mathbb{E} \left[ q(x_1)  q(x_2)  q(x_3)  q(x_4) \right]\right\vert \lesssim \sup_{( y_1, y_2,y_3,y_4)\in \Xi} \left(\Phi(\vert y_1-y_2\vert)\right)^{\frac{1}{2}}  \left(\Phi(\vert y_3-y_4\vert)\right)^{\frac{1}{2}} 
\end{align*}
where $\Xi$ is the set of permutations of $(x_1, x_2,x_3, x_4)$.
We have,  following the same steps as in the proof of  \cite[Lemma~2.6]{bal2008central} 
\begin{align*}
\mathbb{E}\left[ \vert \mathcal{U}^{s,2}(x_r)\vert^2\right] \lesssim \omega^8\varepsilon^{\frac{2d}{p'}} \Vert \Gamma^\frac{\omega}{c^\star}(\cdot, x_r) \Vert_{L^\infty(D)}^2 \Vert u^i \Vert_{L^\infty(D)}^2  \Vert\Phi^{\frac{1}{2}}\Vert_{L^{p'}}^2 \Vert \Gamma^\frac{\omega}{c^\star}(\cdot, \cdot) \Vert_{L^p(D^2)}^2,
\end{align*}
with $p=2+\beta$ and $p'=\frac{2+\beta}{1+\beta}$ with $\beta$ chosen such that \begin{align*}
\sup_{x\in D}\, \int_D \left(\Gamma^\frac{\omega}{c^\star}(x,y)\right)^{2+\beta} \dd y <\infty.
\end{align*}
The term $\mathcal{U}^{s,3}$ can be dealt with in a similar manner.  The difference is the presence of the term $u$ instead of $u^i$ in the integrand.
To deal with this term we first recall that a.s. :
\begin{align*}
\Vert u\Vert_{H^1(D)}  \lesssim  \Vert u^i \Vert_{H^1(D)}.
\end{align*}
Then we can perform a Cauchy-Schwarz type estimate to separate the term with $u$. 
\begin{multline*}
\mathbb{E}\left[ \vert \mathcal{U}^{s,3}(x_r)\vert^2\right] \lesssim  \mathbb{E}\left[ \int_D \left\vert \int_{D^2}  \left(n_\eps(x_1)-n^\star\right)  \left(n_\eps(x_2)-n^\star\right)   \Gamma^\frac{\omega}{c^\star}(x_1, x_2) \Gamma^\frac{\omega}{c^\star}(x_1, x_r)\right.  \right. \\ \left. \left.  \Gamma^\frac{\omega}{c^\star}(x_3, x_2) \dd x_1 \dd x_2\right\vert^2 \dd x_3 \int_D  \left(n_\eps(x_1)-n^\star\right)^2 u^2(x_1) \dd x_1 \right].
\end{multline*}
In the end we obtain:
\begin{multline*}
 \mathbb{E}\left[ \vert \mathcal{U}^{s,3}(x_r)\vert^2\right] \lesssim \omega^{12} \varepsilon^{\frac{2d}{p'}} \Vert \Gamma^\frac{\omega}{c^\star}(\cdot, x_r) \Vert_{L^\infty(D)}^2 \Vert u^i \Vert_{H^1(D)}^2  \Vert\Phi^{\frac{1}{2}}\Vert_{L^{p'}}^2 \Vert \Gamma^\frac{\omega}{c^\star}(\cdot, \cdot) \Vert_{L^p(D^2)}^2  \\ \sup_{x\in D}\, \int_D \left(\Gamma^\frac{\omega}{c^\star}(x,y)\right)^{2} \dd y .
\end{multline*}
\end{proof}
 \bibliographystyle{plain}
 \bibliography{biblio}

\end{document}